\theoremstyle{plain}
\newtheorem*{theorem*}{Theorem}
\newtheorem*{lemma*} {Lemma}
\newtheorem*{corollary*} {Corollary}
\newtheorem*{proposition*} {Proposition}
\newtheorem{theorem}{Theorem}[section]
\newtheorem{lemma}[theorem]{Lemma}
\newtheorem{corollary}[theorem]{Corollary}
\newtheorem{proposition}[theorem]{Proposition}
\newtheorem{conjecture}[theorem]{Conjecture}
\newtheorem{question}[theorem]{Question}
\theoremstyle{remark}
\newtheorem*{remark}{Remark}
\newtheorem*{example}{Example}
\newtheorem*{claim}{Claim}
\theoremstyle{definition}
\def\be{\begin{equation}}
\def\ee{\end{equation}}
\def\co{\colon}
\def\RR{\mathcal{R}}
\def \K {\mathbf{K}}\def \Z {\mathbf{Z}}
\def \G {\mathbf{G}}
\def\R{\mathbb{R}}\def\genus{\op{genus}}
\def\eps{\epsilon}
\def\gl{\op{GL}}
\def\Q{\mathbb{Q}}\def\K{\mathbb{K}}
\def\id{\op{id}}
\def\Z{\mathbb{Z}}
\def\N{\mathbb{N}}
\def\part{\partial}\def\ll{\langle}\def\rr{\rangle}
\def\a{\alpha}
\def\bp{\begin{pmatrix}}
\def\sm{\setminus}\def\ep{\end{pmatrix}}
\def\bn{\begin{enumerate}}
\def\rank{\op{rank}}
\def\en{\end{enumerate}}\def\ba{\begin{array}}
\def\ea{\end{array}}
\def\S{\Sigma}
\def\a{\alpha}
\def\hull{\op{Hull}}
\def\fr12{\frac{1}{2}}
\def\ker{\op{Ker}}
\def\hom{\op{Hom}}
\def\kt{\K\tpm}
\def\deg{\op{deg}}
\def\tpm{[t^{\pm 1}]}
\def\zt{\Z\tpm}\def\qt{\Q\tpm}
\def\G{\Gamma}
\def\ol{\overline}
\def\op{\operatorname}
\def\zt{\Z[t^{\pm 1}]}
\def\K{\mathbb{K}}
\def\th{\op{th}}
\def\cmtbf#1{} \def\cmt#1{}
\def\NN{\mathcal{N}}
\def\PP{\mathcal{P}}
\def\TT{\mathcal{T}}
\def\MM{\mathcal{M}}
\def\QQ{\mathcal{Q}}
\def\XX{\mathcal{X}}
\def\YY{\mathcal{Y}}
\def\VV{\mathcal{V}}
\def\GG{\mathcal{G}}
\def\hull{\op{conv}}
\def\wti{\widetilde}
\def\what{\widehat}
\def\sym{{\op{sym}}}
\def\mfp{\mathfrak{P}}
\def\mfg{\mathfrak{G}}
\def\TT{\mathcal{T}}
\def\th{\op{th}}
\begin{document}

\title[Groups and marked polytopes]{Two-generator one-relator groups\\ and marked polytopes}

\author{Stefan Friedl}
\address{Fakult\"at f\"ur Mathematik\\ Universit\"at Regensburg\\   Germany}
\email{sfriedl@gmail.com}

\author{Stephan Tillmann}
\address{School of Mathematics and Statistics\\ The University of Sydney\\ NSW 2006 Australia} 
\email{stephan.tillmann@sydney.edu.au} 

\date{\today}
\def\subjclassname{\textup{2000} Mathematics Subject Classification}
\expandafter\let\csname subjclassname@1991\endcsname=\subjclassname \expandafter\let\csname
subjclassname@2000\endcsname=\subjclassname 
\subjclass{Primary 
20J05; 
Secondary
20F65, 
22E40, 
57R19 
}
\keywords{Finitely presented group, Novikov ring, BNS invariant, Sigma invariant, Fox calculus}

\date{\today}
\begin{abstract} 
We  use Fox calculus to assign a marked polytope to a `nice' group presentation with two generators and one relator. Relating the marked vertices to Novikov--Sikorav homology we show that they  determine the Bieri--Neumann--Strebel invariant of the group. Furthermore we  show that in many cases the marked  polytope is an invariant of the underlying group and that in those cases the marked polytope also determines the minimal complexity of all the associated HNN-splittings. 
\end{abstract}
\maketitle

\[ \mbox{\emph{Dedicated to the memory of Tim Cochran}}\]
\vspace{0.5cm}

\section{Summary of results}
In this paper, a  $(2,1)$--presentation is a group presentation $\pi=\ll x,y\,|\, r\rr$ 
with  two generators and one relator. A $(2,1)$--presentation $\pi$ naturally gives rise to a group, which we denote $G_\pi$.
We say that a $(2,1)$--presentation $\pi$ is \emph{nice} if it satisfies the following conditions:
\bn
\item $r$ is a non-empty,  cyclically reduced word, and
\item $b_1(G_\pi)=2$.
\en
To a nice $(2,1)$--presentation $\pi=\ll x,y\,|\, r\rr$ we will associate
a marked polytope $\MM_\pi$ in $H_1(G_\pi;\R)$.
A marked polytope  is a polytope  together with a (possibly empty) set of marked vertices.
Now we give an informal outline of the definition of $\MM_\pi$ (see also Figure~\ref{fig:brown-intro}), a formal definition is given in Section~\ref{section:defpolytopepi}.

Identify $H_1(G_\pi;\Z)$ with $\Z^2$ such that $x$ corresponds to $(1,0)$ and $y$ corresponds to $(0,1)$. 
Then the relator $r$ determines a discrete walk on the integer lattice in $H_1(G_\pi;\R),$ and the marked polytope $\MM_\pi$ is obtained from the convex hull of the trace of this walk:
\bn
\item Start at the origin and walk across $\Z^2$ reading the word $r$ from the left.
\item Take the convex hull $\mathcal{C}$ of the set of all lattice points reached by the walk.
\item Mark precisely those vertices of $\mathcal{C}$ that the walk passes through exactly once.
\item Consider the unit squares that are completely contained in $\mathcal{C}$ and which touch a vertex of $\mathcal{C}$. The set of vertices of $\MM_\pi$ is defined as the set of midpoints of all of these squares, and a vertex of $\MM_\pi$ is marked precisely when all the corresponding vertices of $\mathcal{C}$ are marked.
\en
Figure~\ref{fig:brown-intro} illustrates the construction of the marked polytope for the presentation
$$\pi=\ll x,y\,|\, yx^4yx^{-1}y^{-1}x^2y^{-1}x^{-2}y^2xy^{-1}xy^{-1}x^{-1}y^{-2}x^{-3}y^2x^{-1}\rr.$$

\begin{figure}[h]
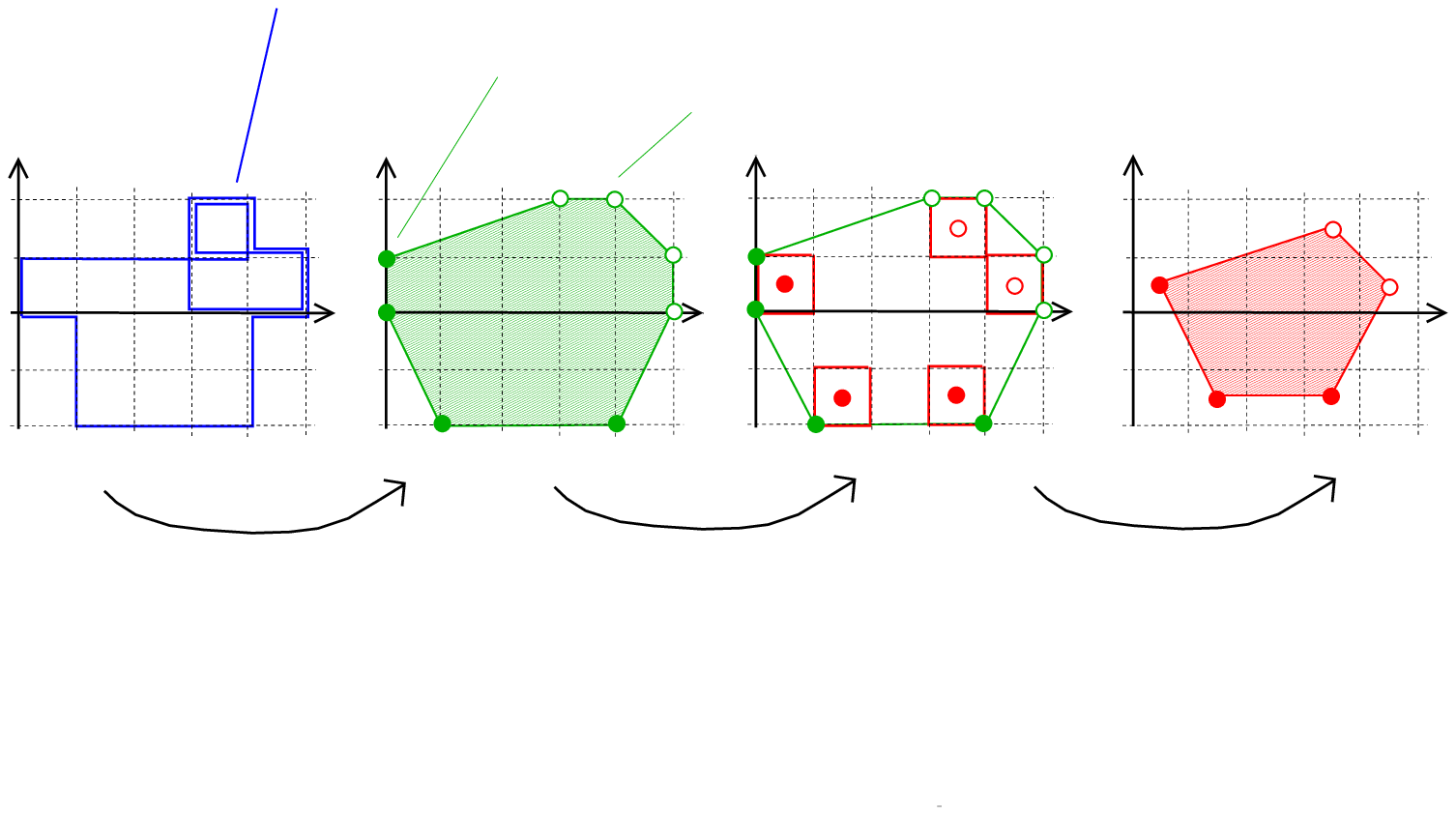\label{fig:brown-intro}
\caption{Marked polytope $\MM_\pi$ for a presentation $\pi$}
\end{figure}

We expect that the marked polytope $\MM_\pi$ contains interesting information about the group $G_\pi$; an example of this is given by our first main result.
The Bieri--Neumann--Strebel invariant $\S(G)$ of the
 finitely generated group $G$ is an open subset of the `sphere' $S(G):=(\hom(G,\R)\sm \{0\})/\R_{>0}$. (See Section~\ref{section:bns} for more details.) It turns out that $\MM_\pi$ determines the Bieri--Neumann--Strebel invariant of $G_\pi$.
In order to state this result we need one more definition. Given the  polytope $\MM$ in the vector space $V,$ we say that the homomorphism $\phi\in \hom(V,\R)$ \emph{pairs maximally with the vertex $v$} if $\phi(v)>\phi(w)$ for all vertices $w\ne v$.

\begin{theorem}\label{mainthm}
Let $\pi=\ll x,y\,|\, r\rr$ be a nice $(2,1)$--presentation.
A non-trivial class $\phi\in H^1(G_\pi;\R)$  represents an element in $\Sigma(G_\pi)$ if and only if $\phi$ pairs maximally with a marked vertex of $\MM_\pi$. 
\end{theorem}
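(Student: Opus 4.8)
The plan is to connect the combinatorial marked polytope $\MM_\pi$ to the Bieri--Neumann--Strebel invariant through Novikov homology. Recall that by Sikorav's theorem (which I would cite in an earlier section), for a finitely presented group $G$ and a nonzero $\phi\in H^1(G;\R)$, the class $[\phi]$ lies in $\Sigma(G)$ if and only if $H_1(G;\widehat{\Z G}_\phi)=0$ and $H_0(G;\widehat{\Z G}_\phi)=0$, where $\widehat{\Z G}_\phi$ is the Novikov--Sikorav completion; since $b_1(G_\pi)=2>0$, the $H_0$ condition is automatic, so the point is to understand $H_1(G_\pi;\widehat{\Z G_\pi}_\phi)$. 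For a $(2,1)$--presentation the chain complex of the presentation $2$--complex is
\[
\widehat{\Z G_\pi}_\phi \xrightarrow{\;\begin{pmatrix}\partial r/\partial x\\ \partial r/\partial y\end{pmatrix}\;} \widehat{\Z G_\pi}_\phi^{\,2} \xrightarrow{\;(x-1,\; y-1)\;} \widehat{\Z G_\pi}_\phi,
\]
where the maps are given by Fox derivatives. So the vanishing of $H_1$ is governed by whether the Fox derivatives $\partial r/\partial x$ and $\partial r/\partial y$, viewed in the Novikov ring, generate the appropriate submodule — equivalently, by whether one of them becomes a \emph{unit} in $\widehat{\Z G_\pi}_\phi$ after the obvious reductions. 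This is the heart of the argument, and it should have been set up in the body of the paper as the statement that $[\phi]\in\Sigma(G_\pi)$ iff a suitable Fox derivative is invertible over $\widehat{\Z G_\pi}_\phi$.

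Next I would translate invertibility over the Novikov ring into a statement about leading terms. An element of $\widehat{\Z G_\pi}_\phi$ is invertible precisely when its ``$\phi$--leading part'' — the sum of terms supported on the face of its Newton polytope where $\phi$ is maximized — is a unit in the corresponding group ring of $\ker(\phi)$ (using here that $\ker(\phi)$ maps to $\Z$ via a complementary homomorphism, or that it is trivial/cyclic, so that its group ring has no zero divisors and units are monomials). The Fox derivatives $\partial r/\partial x$ and $\partial r/\partial y$ are, up to sign, sums of the group elements corresponding to the prefixes of $r$ at which the letter being differentiated appears; geometrically these prefixes are exactly the lattice points visited by the walk in step (1) of the construction, weighted according to whether the walk is moving in the $x$-- or $y$--direction. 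Therefore the Newton polytope of $\partial r/\partial x$ (resp. $\partial r/\partial y$) is essentially the convex hull $\mathcal C$ shifted by a half-unit in the $x$-- (resp. $y$--) direction, and the coefficient of a given lattice point is $\pm(\text{number of times the walk passes through it in that direction})$. The passage to the midpoints of unit squares in step (4) of the construction of $\MM_\pi$ is exactly the intersection/averaging of these two shifted polytopes, and the marking condition ``the walk passes through the vertex exactly once'' is exactly the condition that the leading coefficient is $\pm 1$, i.e.\ a unit. I would make this dictionary precise as a lemma: $\phi$ pairs maximally with a vertex $v$ of $\MM_\pi$ that is marked iff the $\phi$--leading term of the relevant Fox derivative is a unit of $\Z[\ker\phi]$.

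Combining these, $[\phi]\in\Sigma(G_\pi)$ iff some Fox derivative is a unit over $\widehat{\Z G_\pi}_\phi$ iff its $\phi$--leading term is a monomial with coefficient $\pm1$ iff $\phi$ attains its maximum on the Newton polytope at a single vertex visited exactly once iff $\phi$ pairs maximally with a marked vertex of $\MM_\pi$. One subtlety to handle carefully is that there are \emph{two} Fox derivatives, and a priori one could be a unit while the other is not; here I would use that the relation $\partial r/\partial x\,(x-1) + \partial r/\partial y\,(y-1) = 0$ (the fundamental identity of Fox calculus applied to the abelianization, using $r$ maps to $0$ in $H_1$) constrains the two Newton polytopes so that the marking in $\MM_\pi$ — defined via the \emph{squares}, hence using information from both directions simultaneously — picks out precisely the vertices where the module presented by the full $1\times 2$ matrix, rather than a single entry, degenerates. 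The main obstacle, and the step requiring the most care, is precisely this analysis of invertibility in the Novikov ring $\widehat{\Z G_\pi}_\phi$ when $\ker\phi$ is not finitely generated: one cannot simply invoke an Euclidean algorithm, and one must argue that the higher-order tail of a Fox derivative with unit leading term can be absorbed by a geometric series \emph{and} that this computation is compatible with the two-term nature of the presentation complex. Everything else — the identification of Fox derivatives with the lattice walk, the convex-hull and square-midpoint bookkeeping, and the reduction of $\Sigma$ to Novikov homology — is essentially a matter of carefully unwinding the definitions already in place.
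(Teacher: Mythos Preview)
Your overall architecture matches the paper's: Sikorav reduces membership in $\Sigma(G_\pi)$ to vanishing of $H_0$ and $H_1$ with Novikov coefficients; the presentation complex lets you express this as left-invertibility of (say) $r_y$ in $\what{\Z[G_\pi]_\phi}$; Proposition~\ref{prop:foxpolytope} gives $\MM(r_y)=\MM_\pi+\MM(x-1)$, so the question becomes whether $\phi$ pairs maximally with a marked vertex of $\MM(r_y)$. So far so good.

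The genuine gap is in your step ``invertible in $\what{\Z[G_\pi]_\phi}$ iff $\phi$-leading term is a monomial''. The easy direction (monomial leading term $\Rightarrow$ invertible via geometric series) is fine, but for the converse you assert that the leading part lives in $\Z[\ker\phi]$ and that this ring has only trivial units because $\ker\phi$ ``maps to $\Z$ via a complementary homomorphism, or is trivial/cyclic''. This is false: $\ker\phi$ is a subgroup of the \emph{nonabelian} group $G_\pi$ and always contains the commutator subgroup; for irrational $\phi$ it \emph{equals} $[G_\pi,G_\pi]$, and in general it is neither cyclic nor obviously indicable from your description. The paper closes this gap with a substantive input you do not mention: a torsion-free one-relator group is \emph{locally indicable} (Brodski\u{\i}), and by Higman the only one-sided units in the integral group ring of a locally indicable group are monomials. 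That is what makes Lemma~\ref{lem:unitslocallyindicable} work and is the real content of this step.

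You also omit the torsion case entirely. If $G_\pi$ has torsion then $r=s^m$ with $m\ge 2$, the local-indicability argument is unavailable, and one must show separately both that $r_y$ is never left-invertible in $\what{\Z[G_\pi]_\phi}$ and that $\MM(r_y)$ has no marked vertices. The paper does this by passing to the torsion-free quotient $\ll x,y\mid s\rr$ and observing that the image of $r_y$ there has every coefficient divisible by $m$. Finally, a smaller point: your identification of ``marked'' with ``leading coefficient $\pm 1$'' tacitly conflates coefficients in $\Z[G_\pi]$ with coefficients in $\Z[\Z^2]$; the definition of marking is that $r_y^v$ is a monomial in $\Z[G_\pi]$, and the bridge to the walk-count picture is Weinbaum's theorem (Corollary~\ref{cor:distinct}) guaranteeing that the prefixes of $r$ are pairwise distinct in $G_\pi$.
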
 

The well-versed reader might be excused for a sense of d\'eja vu: the
theorem can be viewed as a reformulation of Brown's algorithm \cite[Theorem~4.3]{Brn87} and it is closely related to~\cite[Theorem~7.3]{BR88}.
The key observation in our proof is a reformulation of $\MM_\pi$ in terms of the Fox derivatives $r_x=\frac{\partial r}{\partial x}$ and $r_y=\frac{\partial r}{\partial y}$, leading to a straightforward proof of Theorem~\ref{mainthm} using the generalised Novikov rings of Sikorav~\cite{Si87}.

The marked polytope $\MM_\pi$ associated to the $(2,1)$--presentation $\pi=\ll x,y\,|\, r\rr$ depends a priori on the presentation $\pi$ and not just on the isomorphism type of the group $G_\pi$. Lemma~\ref{lem:polytopetranslate} shows that if we replace $r$ by a cyclic permutation of $r$, then the resulting  marked polytope in $H_1(G_\pi;\R)$ is a translate of the original marked polytope. 
We suspect that this is the only indeterminacy. More precisely, we propose the following conjecture.

\begin{conjecture}\label{mainconj}
If $G$ is a group admitting a nice $(2,1)$--presentation $\pi$,
then up to translation the marked polytope $\MM_\pi\subset H_1(G;\R)$ is an invariant of $G$.
\end{conjecture}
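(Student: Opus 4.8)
The plan is first to reduce Conjecture~\ref{mainconj} to the assertion that the \emph{unmarked} polytope $P_\pi$ underlying $\MM_\pi$ is, up to translation, an invariant of $G$, and then to identify $P_\pi$ up to translation with an intrinsically defined torsion invariant of $G$.

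For the reduction, suppose that the unmarked polytope $P_\pi\subseteq H_1(G;\R)$ is known to depend, up to translation, only on $G$. Every vertex $v$ of $P_\pi$ carries a normal cone $\mathcal C_v=\{\phi\in\hom(H_1(G;\R),\R)\mid \phi(v)>\phi(w)\text{ for every vertex }w\ne v\}$, and translating $P_\pi$ does not change $\mathcal C_v$ as a subset of $H^1(G;\R)$; hence the normal fan $\{\mathcal C_v\}$ is also determined by $G$. For $\phi\in\mathcal C_v$ the vertex $v$ is the unique one maximally paired with $\phi$, so Theorem~\ref{mainthm} gives: $[\phi]\in\Sigma(G_\pi)$ for some (equivalently every) $\phi\in\mathcal C_v$ if and only if $v$ is marked. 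Since $\Sigma(G)$ is an invariant of $G$, the set of marked vertices — namely $\{v\mid\mathcal C_v\subseteq\Sigma(G)\}$ — is then an invariant of $G$, and combined with the invariance of $P_\pi$ this proves the conjecture. (The low-dimensional cases, where $P_\pi$ is a segment or a point, are covered by the same argument since every vertex still has a nonempty normal cone.)

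It remains to show that $P_\pi$ is an invariant of $G$ up to translation. Assume first that $r$ is not a proper power. Then $G$ is torsion-free, the presentation $2$--complex $X_\pi$ is aspherical (Lyndon), so $X_\pi$ is a finite model of $BG$; $G$ is locally indicable (Brodskii), hence $\Z G$ embeds into its Hughes-free division ring $\mathcal D$ (whose existence for locally indicable groups is due to Jaikin-Zapirain and López-Álvarez); and $\Wh(G)=0$ (known for large classes of torsion-free one-relator groups via the Magnus hierarchy and Waldhausen's Mayer--Vietoris results in algebraic $K$-theory, and expected in general). Since $G$ is an infinite one-relator group all of its $L^2$--Betti numbers vanish, so the based free $\Z G$--chain complex of $X_\pi$, which in reduced form reads $\Z G\xrightarrow{(r_x,\,r_y)}\Z G^2\xrightarrow{(x-1,\,y-1)}\Z G$, becomes acyclic after $-\otimes_{\Z G}\mathcal D$ (the right-hand map is onto because $x-1\in\mathcal D^\times$, the left-hand map is injective by Lyndon's theorem because $r$ is not a proper power, and the Euler characteristic is $0$). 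Its Whitehead torsion in $\widetilde K_1(\mathcal D)$, pushed through the Friedl--L\"uck/Henneke--Kielak polytope homomorphism, defines a polytope in $H_1(G;\R)$, well defined up to translation, which — because torsion is a homotopy invariant once $\Wh(G)=0$ — does not depend on the choice of finite $BG$ and is therefore an invariant of $G$. The last point is to match this algebraic torsion polytope with $P_\pi$: one computes the torsion as a Dieudonn\'e determinant assembled from $r_x,r_y$ and $x-1,y-1$, uses the fundamental Fox-calculus identity $r-1=r_x(x-1)+r_y(y-1)$ to control how the two differentials interact, and identifies the Newton polytope of the resulting expression in $\Z[H_1(G;\Z)]$ with the convex hull $\mathcal C$ of the trace of the walk, the half-unit ``midpoint of squares'' operation being exactly the contribution of the monomial-type factors $x-1$ and $y-1$.

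I expect the last identification to be the most delicate step: one must rule out cancellation in the relevant Dieudonn\'e determinant so that the algebraically defined polytope is genuinely as large as $\mathcal C$, and the difficulty is concentrated precisely at the vertices of $\mathcal C$, which is also where the combinatorial half-unit correction lives, so the matching must be carried out vertex by vertex. A second obstacle is methodological: the argument leans on the existence of Hughes-free division rings and on the polytope homomorphism for agrarian groups, inputs that postdate this paper, so in effect the conjecture is waiting for that machinery. Finally, the hypothesis that $r$ is not a proper power is genuinely needed — when $r=s^k$ with $k\ge2$ the group $G$ has torsion, $\Z G$ no longer embeds in a division ring, and $X_\pi$ is no longer aspherical — so the proper-power case would require a separate argument, for instance passing to a torsion-free finite-index subgroup (if one carrying a nice $(2,1)$--presentation can be produced) or reworking the above directly over the Novikov--Sikorav rings $\widehat{\Z G}_\phi$ already used in the proof of Theorem~\ref{mainthm}, reconstructing $P_\pi$ up to translation from the $\phi$--orders of their torsions.
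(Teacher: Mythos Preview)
The statement you are addressing is Conjecture~\ref{mainconj}, which the paper explicitly leaves open; there is no proof in the paper to compare against. What the paper does prove is the partial result Theorem~\ref{mainthm2}, which establishes the conjecture under the additional hypothesis that $G$ is residually~$\GG$.

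Your reduction from the marked polytope to the unmarked polytope, via Theorem~\ref{mainthm} and the invariance of $\Sigma(G)$, is exactly what the paper does at the end of the proof of Theorem~\ref{mainthm2}, so that step is sound and matches the paper.

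For the unmarked polytope, your strategy is a natural strengthening of the paper's approach to Theorem~\ref{mainthm2}. In Section~\ref{sec:proof of second theorem} the paper computes Reidemeister torsions $\tau(X_\pi,\varphi)$ over the Ore skew fields $\K(\G)$ for admissible epimorphisms $\varphi\colon G\to\G$ with $\G\in\GG$, takes the supremum of the resulting polytopes to obtain an invariant $\TT(\pi)$ of $G$ (Proposition~\ref{prop:invtofg}), and then needs the residually-$\GG$ hypothesis precisely to find a $\varphi$ that separates the summands of $r_x$, so that $\TT(\pi)=\PP_\pi$ (Proposition~\ref{prop:dpphiequalsx}). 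Your proposal replaces this family of quotient skew fields by a single skew field $\mathcal D$ containing $\Z[G]$ itself. If such a $\mathcal D$ exists together with a polytope homomorphism on $K_1(\mathcal D)$ that restricts to the Newton polytope on $\Z[G]\setminus\{0\}$, then the separation problem disappears: Corollary~\ref{cor:distinct} already shows that the summands of $r_y$ are distinct in $G$, so there is no cancellation in $\Z[G]$, and the torsion polytope equals $\PP(r_y)-\PP(x-1)=\PP_\pi$ by the same computation as in Lemma~\ref{lem:computetau} combined with Proposition~\ref{prop:foxpolytope}. In particular your worry about ``delicate cancellation at vertices'' is largely absorbed into the construction of the polytope homomorphism itself rather than being a separate combinatorial obstacle.

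Two remarks on your stated gaps. First, $\Wh(G)=0$ for torsion-free one-relator groups is not merely expected: the paper invokes it as a consequence of Waldhausen~\cite{Wa78} in the proof of Proposition~\ref{prop:invtofg}, so you need not hedge there. Second, you are right that the existence of a Hughes-free skew field for all locally indicable groups, and of a suitable polytope homomorphism out of its $K_1$, postdate the paper; this is precisely why the conjecture was left open and why the paper restricts to residually-$\GG$ groups. So your proposal is not a proof within the paper's framework but a correct identification of what additional input would close the gap in the torsion-free case --- and this is indeed the route along which the conjecture was subsequently approached. The proper-power case remains, as you note, genuinely different, since asphericity of $X_\pi$, torsion-freeness, and the skew-field embedding all fail there.
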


The difficulty in proving the conjecture is that to the best of our knowledge there is no good theory which relates two 
$(2,1)$--presentations of a group. For example, Zieschang  \cite[p.~36]{Zi70}
and also MacCool--Pietrowski \cite{MP73} showed 
that there exist $(2,1)$--presentations   $ \ll x,y\,|\, r\rr$ and $\ll x',y'\,|\, r'\rr$ representing isomorphic groups,
but such that no isomorphism is induced by an isomorphism of the free groups $\ll x,y\rr$ and $\ll x',y'\rr$. 

Let $\GG$ denote the class of all groups that are torsion-free and elementary amenable. Then $\GG$ contains in particular all torsion-free solvable groups. A group $G$ is \emph{residually $\GG$} if given any non-trivial element $g\in G,$ there exists a homomorphism $\a\co G\to \G$ with $\G\in \GG$ such that $\a(g)$ is non-trivial.

The following can be seen as evidence towards a positive answer to Conjecture~\ref{mainconj}.

\begin{theorem}\label{mainthm2}\label{thm:polytope}
Let  $G$ be a group admitting a nice $(2,1)$--presentation $\pi$.
If $G$ is residually $\GG$, then the polytope $\MM_\pi\subset H_1(G;\R)$ is an invariant of the group $G$ $($up to translation$)$.
\end{theorem}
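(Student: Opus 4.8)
The plan is to factor the invariance statement through an intrinsic homological characterization of the polytope. First I would recall that the polytope $\MM_\pi$ can be rewritten, via the key Fox-calculus reformulation promised in the paragraph after Theorem~\ref{mainthm}, as a Newton-polytope-type object built from the Fox derivatives $r_x$ and $r_y$. Concretely, one should show that $\MM_\pi$ (up to translation) equals the difference of the Newton polytopes of $r_x$ and $r_y$ viewed as elements of the group ring $\Z[H_1(G;\Z)/\text{torsion}]$, or more precisely that its support function at a class $\phi$ is computed by the degrees of the images of $r_x, r_y$ under the map to a suitable Novikov ring $\zphig$. This is the step that Theorem~\ref{mainthm} already exploits for the vertices, and the point here is to extract the \emph{whole} polytope and not merely which $\phi$ are maximized at marked vertices.

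Second, I would invoke the fact that $\MM_\pi$, rewritten this way, is governed by the Alexander-type module $H_1(G;\zphig)$, or equivalently by the twisted homology / $L^2$-type invariants of $G$, via a theorem of the kind ``the marked polytope is the dual Thurston-type norm ball determined by the ranks of $H_1$ with Novikov coefficients.'' The residually-$\GG$ hypothesis is exactly what makes this work: for $G$ residually torsion-free elementary amenable, the group ring $\Z[G]$ embeds in a skew field (by results of Kropholler--Linnell--Moody / Linnell, or Jaikin-Zapirain), so the relevant modules have a well-defined rank and the ``degree function'' $\phi \mapsto \deg_\phi$ is well-defined independently of the chosen presentation. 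Thus the support function of $\MM_\pi$ is recovered from the group $G$ alone. I would phrase this as: for each $\phi\in H^1(G;\R)$ the value $\max_{v\in\MM_\pi}\phi(v) - \min_{v\in\MM_\pi}\phi(v)$ equals an invariant of $G$ (a twisted $L^2$-Euler-characteristic or Novikov-Betti-number difference), hence the polytope is determined up to translation since a polytope is determined by its support function.

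The marking is then handled by Theorem~\ref{mainthm} itself: the set of marked vertices determines, and is determined by, $\Sigma(G_\pi)\subset S(G_\pi)$, and the BNS invariant is manifestly an invariant of the isomorphism type of $G$. So once the underlying polytope is known to be an intrinsic invariant up to translation, the marked vertices are pinned down (up to the same translation) by intersecting with the $\phi$'s lying in $\Sigma(G)$ that pair maximally with a vertex. Combining the two gives that the marked polytope is an invariant of $G$ up to translation.

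The main obstacle I expect is the second step: establishing that the Novikov/twisted-homology ranks genuinely compute the full support function of $\MM_\pi$ and not just its behaviour at generic directions, and doing so in a way that is visibly presentation-independent. This requires (a) the embedding of $\Z[G]$ into a skew field for residually-$\GG$ groups, together with the Malcev--Neumann / Novikov localization needed to see the $\phi$-graded pieces, and (b) a careful argument that for a $(2,1)$-presentation the relevant module $H_1$ is presented by a single element (essentially $r_x$ or $r_y$), so that its ``Alexander polynomial'' in the appropriate Ore localization has Newton polytope exactly $\MM_\pi$. The deformation-of-presentations pathology noted via Zieschang and MacCool--Pietrowski is precisely why one cannot argue combinatorially and must pass through such an intrinsic homological description; but once the degree function is identified with a genuine invariant of $G$, the conclusion is immediate.
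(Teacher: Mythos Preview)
Your proposal contains a genuine gap in the second step. You propose to show that the thickness function $\phi\mapsto \max_{v\in\MM_\pi}\phi(v)-\min_{v\in\MM_\pi}\phi(v)=\th(\MM_\pi,\phi)$ is an invariant of $G$, and then conclude that ``the polytope is determined up to translation since a polytope is determined by its support function.'' But the thickness is \emph{not} the support function: the support function is $\phi\mapsto\max_{v\in\MM_\pi}\phi(v)$, whereas the thickness is $h_{\MM_\pi}(\phi)+h_{\MM_\pi}(-\phi)$. Knowing the thickness in every direction only recovers the \emph{symmetrization} $\MM_\pi^{\sym}=\{\tfrac12(p-q)\mid p,q\in\MM_\pi\}$, not $\MM_\pi$ up to translation; this is exactly Lemma~\ref{lem:addth}(2), and the paper itself flags this obstruction in Section~\ref{section:questions}. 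So even granting that your Novikov-rank or $L^2$-Euler-characteristic computation gives the thickness (which is essentially the content of Theorem~\ref{mainthm3}), you would only prove that $\MM_\pi^{\sym}$ is an invariant, not $\MM_\pi$.

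The paper circumvents this by working not with a scalar degree function but with the non-commutative Reidemeister torsion $\tau(X_\pi,\varphi)\in K_1(\K(\G))$ itself, for admissible epimorphisms $\varphi\colon G\to\G$ with $\G\in\GG$, and then pushing this forward to an element of the polytope \emph{group} $\mfg(H_1(G;\R))$. The invariance of this polytope-valued torsion under change of presentation is established via simple-homotopy theory: both presentation $2$-complexes are aspherical, hence homotopy equivalent, and Waldhausen's vanishing of the Whitehead group for torsion-free one-relator groups upgrades this to a simple homotopy equivalence (Proposition~\ref{prop:invtofg}). The residually-$\GG$ hypothesis then enters only to show that the supremum $\TT(\pi)$ of these torsion polytopes over all admissible $\varphi$ actually equals $\PP_\pi$ (Proposition~\ref{prop:dpphiequalsx}): one chooses $\varphi$ separating the finitely many summands of $r_x$, so that no cancellation occurs. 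Your handling of the marking via Theorem~\ref{mainthm} is correct and is exactly what the paper does in the final paragraph of the proof.
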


We show in Lemma~\ref{lem:zbyfree} that a group $G$ satisfies the hypothesis of the theorem if there exists $[\phi]\in S(G)$ such that both $[\phi]$ and $[-\phi]$ lie in $\S(G)$. This is not as rare an occurrence as it might sound: Dunfield and D. Thurston \cite[Section~6]{DT06} give strong evidence for the conjecture that `most' groups with a nice $(2,1)$--presentation have this property. Moreover, if $G$ is the fundamental group of an aspherical 3--manifold, then it is conjectured that $G$ is in fact residually $\GG$ (see \cite{AFW13}).

As is perhaps to be expected, considering the authors' background,
the motivation for introducing and studying the marked polytope $\MM_\pi$ comes from 3--manifold topology. In \cite{FT15} we show that for many, possibly all, 3--manifolds such that the fundamental group admits a nice $(2,1)$--presentation, the polytope $\MM_\pi$ is dual to the unit ball of the Thurston norm \cite{Th86} of the 3--manifold, with the marked vertices dual to the fibered cones. However, the focus of this paper is on the following group theoretic analogue of the Thurston norm, which is of independent interest.

Given a group $G$ and an epimorphism $\phi\co G\to \Z,$ define $c(G,\phi)$ as the minimal rank of a group along which we can split 
$(G,\phi)$. We refer to Section~\ref{section:splittings} for details. We relate this quantity to the geometry of the polytope $\MM_\pi$ via the notion of \emph{thickness}.
Given the polytope $\PP$ in the vector space $V,$ the \emph{thickness of $\PP$ with respect to the homomorphism $\phi\co V\to \R$} is
\[ \th(\PP,\phi):=\max\{ \phi(p)-\phi(q)\,|\,p,q\in \PP\}.\]
In our setting, $\PP= \MM_\pi,$ $V = H_1(G_\pi;\R)\cong \R^2$ and $\phi\co G_\pi\to \Z$ induces a homomorphism $V\to \R$ denoted by the same letter. 

\begin{theorem}\label{mainthm3}
Let  $G$ be a group, which is residually $\GG$ and has the nice $(2,1)$--presentation $\pi.$
Then for every epimorphism $\phi\co G\to \Z$ we have
\[ c(G,\phi)=\th(\MM_\pi,\phi)+1.\]
\end{theorem}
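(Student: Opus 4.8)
The plan is to prove the two inequalities $c(G,\phi)\le \th(\MM_\pi,\phi)+1$ and $c(G,\phi)\ge \th(\MM_\pi,\phi)+1$ separately, using the Fox-calculus reformulation of $\MM_\pi$ that underlies Theorem~\ref{mainthm}.

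\textbf{The upper bound.} First I would produce an explicit splitting of $(G,\phi)$ realising the rank $\th(\MM_\pi,\phi)+1$. The relator $r$, read along the walk in $H_1(G_\pi;\R)\cong\R^2$, has a well-defined maximal and minimal value of $\phi$ on the trace; after a cyclic permutation of $r$ (which by Lemma~\ref{lem:polytopetranslate} only translates $\MM_\pi$ and hence does not change $\th(\MM_\pi,\phi)$ or $c(G,\phi)$) we may arrange that the walk starts at a point where $\phi$ is minimal. Pick a splitting generator, i.e.\ a basis element $t$ of $\Z$ with $\phi$-preimage, and rewrite the presentation $\ll x,y\mid r\rr$ as an HNN-presentation over the kernel direction: one of the two generators is sent to a generator of $\Z$ and the other becomes a stable-letter--conjugation generator. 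Then the associated HNN base group is generated by the $\phi$-level translates of the remaining generator occurring in $r$, and the number of these translates is exactly governed by the $\phi$-width of the walk, which one identifies with $\th(\mathcal C,\phi)$ and then with $\th(\MM_\pi,\phi)$ via step (4) of the polytope construction (the midpoint/half-shift changes widths by the right integer amount). This gives an HNN-splitting along a free group of rank $\th(\MM_\pi,\phi)+1$, hence the inequality. I expect this to be essentially the content of Brown's/Strebel's rewriting, repackaged; the ``$+1$'' comes from the number of vertices versus the number of edges of the interval that is the image of the walk under $\phi$.

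\textbf{The lower bound.} This is where residual $\GG$ and Theorem~\ref{mainthm} enter. Suppose $(G,\phi)$ splits along a group $A$ of rank $k$. The strategy is to compute a Novikov-homology obstruction: over the Novikov ring associated to $\phi$ (Sikorav's $\widehat{\Z G_\phi}$, or rather its completion along a suitable quotient in $\GG$) the splitting forces $H_1$ of the mapping-torus-type complex to be generated by $k$ elements, which bounds from above the ``$\phi$-degree spread'' of the Alexander-type element $\det$ built from the Fox derivatives $r_x,r_y$. Concretely, the Fox matrix $(r_x\ r_y)$ presents the relevant module, and the Newton polytope of the relevant minor (equivalently, of $r_x$ or $r_y$ after the change of variables recording $\phi$) has $\phi$-width at least $\th(\MM_\pi,\phi)$; a splitting along rank $k$ caps this width at $k-1$ because the base group of the splitting contributes a finitely generated module over the kernel group ring whose rank over the Novikov field is $k-1$ in each $\phi$-degree strip. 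To make the finite-generatedness argument rigorous one passes to a coefficient system $\alpha\colon G\to \Gamma$ with $\Gamma\in\GG$ detecting the relevant nonzero element of the Novikov-ring-localised homology; this is exactly why the hypothesis ``$G$ residually $\GG$'' is needed, and it is the same device used to make $\MM_\pi$ an invariant in Theorem~\ref{mainthm2}. The clean way to organise this is: (a) identify $\th(\MM_\pi,\phi)$ with the $\phi$-thickness of the Newton polytope of the Fox-derivative element; (b) show any rank-$k$ splitting yields, over the appropriate skew Novikov field, a presentation of $H_1$ with $k-1$ ``kernel-direction'' generators; (c) conclude the Newton-polytope thickness is $\le k-1$.

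\textbf{Main obstacle.} The hard part will be step (b) of the lower bound: controlling the algebra over the non-commutative Novikov ring precisely enough to extract the numerical bound ``width $\le k-1$'', and in particular handling the passage from $G$ to a group in $\GG$ without losing width. One must check that the chosen $\Gamma\in\GG$ can be taken to detect the full $\phi$-thickness of the polytope (not just nontriviality), which should follow because $\MM_\pi$ is computed from $r_x,r_y$ by a formula insensitive to the coefficient system as long as the relevant leading terms survive — and residual $\GG$-ness, applied to finitely many group elements coming from the extreme monomials of $r_x$ and $r_y$, guarantees a single $\Gamma$ detecting all of them simultaneously. Matching this with the splitting invariant $c(G,\phi)$, and tracking the half-integer shift in step (4) of the construction of $\MM_\pi$ so that the $+1$ lands correctly on both sides, is the bookkeeping that needs care but should be routine once the structural inputs are in place.
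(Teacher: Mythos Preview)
Your two-inequality strategy matches the paper's exactly, and your identification of the main obstacle (choosing a single $\Gamma\in\GG$ that detects the full thickness) is on the mark. Two technical points you should sharpen. For the upper bound, neither $x$ nor $y$ need map to a generator of $\Z$, so the paper first performs Nielsen moves to reach a presentation $\ll a,t\mid r\rr$ with $\phi(a)=0$, $\phi(t)=1$; after that the HNN-rewriting you describe goes through, but Magnus's Freiheitssatz is the crucial input that the proper subsets $\{x_d,\dots,x_{D-1}\}$ and $\{x_{d+1},\dots,x_D\}$ of conjugates generate \emph{free} subgroups of the base---without it you do not even have a valid HNN-splitting, let alone one over a free group. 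For the lower bound, the paper does not work over a Novikov completion but over the Ore skew-field $\K(\Gamma')$ with $\Gamma'=\ker(\phi\colon\Gamma\to\Z)$, packaging everything through non-commutative Reidemeister torsion $\tau(X_\pi,\varphi)$: a rank-$c$ splitting yields a presentation of $G$ whose Fox matrix (after stripping the $t$-column) has the form $P+tQ$ with $P,Q$ over $\Z[\Gamma']$ and $Q$ having only $c$ nonzero rows, which bounds $\dim_{\K(\Gamma')}H_1$ by $c$; subtracting $\dim_{\K(\Gamma')}H_0=1$ gives $c-1\ge\th_\phi(\PP(\tau(X_\pi,\varphi)))$, and residual $\GG$ is invoked exactly as you say to find $\varphi$ with $\PP(\tau(X_\pi,\varphi))=\PP_\pi$. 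So your ``$k-1$ kernel-direction generators'' is slightly off: it is $c$ nonzero rows in $Q$, and the $-1$ comes from $H_0$.
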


It is straightforward to see that measuring thickness of a polytope gives rise to a seminorm. Thus we obtain the following corollary.

\begin{corollary}
Let  $G$ be a group, which is residually $\GG$ and has a nice $(2,1)$--presentation. Then 
\[ \ba{rcl} \hom(G;\Z)&\to &\Z_{\geq 0} \\
\phi&\mapsto & c(G,\phi)-1\ea \]
is a seminorm.
\end{corollary}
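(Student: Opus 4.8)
The plan is to deduce the corollary directly from Theorem~\ref{mainthm3}, which identifies $c(G,\phi)-1$ with the thickness $\th(\MM_\pi,\phi)$, so it suffices to show that $\phi\mapsto \th(\MM_\pi,\phi)$ is a seminorm on $\hom(G;\Z)\cong \hom(H_1(G;\R),\R)$, and in fact that $\PP\mapsto \th(\PP,\phi)$ behaves well enough for \emph{any} fixed polytope $\PP$ in a real vector space $V$. First I would observe that $\th(\PP,-)$ takes non-negative values, since taking $p=q$ gives $\phi(p)-\phi(q)=0$, so the maximum defining $\th(\PP,\phi)$ is at least $0$.

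Next I would verify positive homogeneity: for $\lambda\in\R$ (or at least $\lambda\in\Z$, which is all we need here), $\th(\PP,\lambda\phi)=\max\{\lambda(\phi(p)-\phi(q)):p,q\in\PP\}$. When $\lambda\ge 0$ this equals $\lambda\,\th(\PP,\phi)$; when $\lambda<0$ one swaps the roles of $p$ and $q$ and gets $|\lambda|\,\th(\PP,\phi)$. Hence $\th(\PP,\lambda\phi)=|\lambda|\,\th(\PP,\phi)$, which is exactly what is needed since a seminorm on $\hom(G;\Z)$ is only required to be homogeneous with respect to integer scalars.

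The remaining point is the triangle inequality. Given $\phi,\psi\in\hom(V,\R)$, choose $p,q\in\PP$ realising $\th(\PP,\phi+\psi)$, i.e. $\th(\PP,\phi+\psi)=(\phi+\psi)(p)-(\phi+\psi)(q)=(\phi(p)-\phi(q))+(\psi(p)-\psi(q))$. Each summand is bounded above by $\th(\PP,\phi)$ respectively $\th(\PP,\psi)$, by definition of the thickness (the same pair $p,q$ is an admissible, though not necessarily optimal, choice for $\phi$ and for $\psi$ separately). Therefore $\th(\PP,\phi+\psi)\le \th(\PP,\phi)+\th(\PP,\psi)$, completing the verification that $\th(\PP,-)$ is a seminorm. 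Applying this with $\PP=\MM_\pi$ and combining with Theorem~\ref{mainthm3} gives that $\phi\mapsto c(G,\phi)-1=\th(\MM_\pi,\phi)$ is a seminorm, as claimed.

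There is essentially no obstacle here: the only mild subtlety is making sure one only claims homogeneity over the scalars that make sense for a function defined on $\hom(G;\Z)$, and noting that the argument works for an arbitrary polytope (the marking plays no role in the thickness). One could also phrase the whole argument more conceptually by noting that $\phi\mapsto \max_{p\in\PP}\phi(p)$ is the support function of $\PP$, hence sublinear, and $\th(\PP,\phi)$ is the sum of the support functions of $\PP$ and $-\PP$ evaluated at $\phi$; but the elementary computation above is shorter and self-contained.
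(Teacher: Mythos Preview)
Your proof is correct and follows exactly the approach the paper indicates: invoke Theorem~\ref{mainthm3} to rewrite $c(G,\phi)-1$ as $\th(\MM_\pi,\phi)$, then verify directly that thickness of a fixed polytope is a seminorm in the functional $\phi$. The paper merely asserts this last point as ``straightforward,'' and your argument supplies the routine details (nonnegativity, homogeneity, subadditivity) that justify it.
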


The results summarized thus far allow us to conclude the introduction with the following corollary, which has a conceptually simple proof.

\begin{corollary}
Let  $G$ be a group that admits a nice $(2,1)$--presentation $\pi$. 
Then either $G$ is  isomorphic to $\Z^2$ or there exists $\phi\in H^1(G;\Z),$ which does not lie in $\Sigma(G)$.
\end{corollary}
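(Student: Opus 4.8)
The plan is to argue by contradiction: suppose that $G$ is not isomorphic to $\Z^2$ and that every non-trivial $\phi\in H^1(G;\Z)$ lies in $\Sigma(G)$. Since $b_1(G_\pi)=2$ by niceness, $H^1(G;\Z)\cong\Z^2$, and the assumption means that for every primitive class $\phi$ both $[\phi]$ and $[-\phi]$ lie in $\Sigma(G)$. By Theorem~\ref{mainthm} this is equivalent to saying that for every such $\phi$ there is a marked vertex $v^+$ of $\MM_\pi$ with which $\phi$ pairs maximally and a marked vertex $v^-$ with which $-\phi$ pairs maximally (equivalently, with which $\phi$ pairs minimally). I would first observe that by Lemma~\ref{lem:zbyfree} the existence of a single $[\phi]$ with $[\pm\phi]\in\Sigma(G)$ already forces $G$ to be residually $\GG$, so Theorems~\ref{mainthm2} and~\ref{mainthm3} apply to $G$.

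Next I would extract a statement purely about the geometry of the polytope $\MM_\pi\subset H_1(G;\R)\cong\R^2$. The condition ``for every non-trivial $\phi$, both $\phi$ and $-\phi$ pair maximally with marked vertices of $\MM_\pi$'' says in particular that for every direction $\phi$ there is a (unique) vertex of $\MM_\pi$ maximising $\phi$ and it is marked. If $\MM_\pi$ is a single point, then by construction (the midpoint construction from the convex hull $\mathcal C$ of the walk) the walk traced by $r$ is ``thin'' in every direction, and combined with $b_1(G_\pi)=2$ one should be able to deduce $G\cong\Z^2$ — indeed the two relators $r_x$, $r_y$ would have to be monomials up to units, forcing $r$ to be, up to conjugation and inversion, the commutator $[x,y]$, whence $G_\pi\cong\Z^2$. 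I would make this step precise using the Fox-calculus reformulation of $\MM_\pi$ mentioned after Theorem~\ref{mainthm} (the Newton polytopes of $r_x$ and $r_y$): $\MM_\pi$ being a point means $r_x$ and $r_y$ are, up to units in $\Z[\Z^2]$, equal to $\pm$ a group element, and a short argument then pins down $r$.

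In the remaining case $\MM_\pi$ is a genuine (2-dimensional, or at least 1-dimensional) polytope, and every vertex of $\MM_\pi$ is marked. Now I would use Theorem~\ref{mainthm3}: for an epimorphism $\phi\co G\to\Z$ we have $c(G,\phi)=\th(\MM_\pi,\phi)+1$. Pick $\phi$ realising a positive value of $\th(\MM_\pi,\cdot)$ — such $\phi$ exists since $\MM_\pi$ is not a point — so $c(G,\phi)\geq 2$. On the other hand, the assumption $[\pm\phi]\in\Sigma(G)$ is exactly the condition (by the standard characterisation of $\Sigma$, e.g.\ Bieri--Strebel, or via Lemma~\ref{lem:zbyfree}) that $(G,\phi)$ splits as an ascending \emph{and} descending HNN extension over a finitely generated group, i.e.\ that $G$ is an extension of $\Z$ by a finitely generated group; but this does not by itself contradict $c(G,\phi)\geq 2$. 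So the cleaner route is: the hypothesis gives that \emph{every} $\phi$ pairs maximally with a marked vertex, hence (taking $\phi$ and $-\phi$) every edge of the polytope, in particular the polytope is ``all marked'', which via the 3-manifold-style dictionary forces the thickness seminorm $\phi\mapsto c(G,\phi)-1$ to vanish identically — but it cannot vanish on a non-degenerate polytope.

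The main obstacle, and the step I would spend the most care on, is the case analysis at the boundary: making rigorous the claim that ``$\MM_\pi$ a point $\Rightarrow G\cong\Z^2$''. This requires going back to the explicit definition of $\MM_\pi$ (or its Fox-derivative description) and genuinely using both niceness conditions — that $r$ is non-empty and cyclically reduced, and that $b_1(G_\pi)=2$ — to rule out degenerate relators. Everything else is a formal consequence of Theorems~\ref{mainthm} and~\ref{mainthm3} together with the elementary fact that a non-degenerate polytope has a direction in which its thickness is positive and a vertex that fails to be the unique maximiser for some rational direction.
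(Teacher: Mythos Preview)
Your overall strategy---case on whether $\MM_\pi$ is a single point or not---can be made to work, but you have misidentified where the difficulty lies, and one of your intermediate claims is simply false.

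First, the sentence ``the polytope is `all marked', which via the 3-manifold-style dictionary forces the thickness seminorm $\phi\mapsto c(G,\phi)-1$ to vanish identically'' is wrong: the thickness of $\MM_\pi$ depends only on the shape of the polytope, not on which vertices are marked. Fortunately this claim is not load-bearing, since your final sentence already contains the correct idea for the non-point case: if $\MM_\pi$ has at least two vertices, pick an integer $\phi$ orthogonal to an edge (possible since the vertices are lattice points); then $\phi$ has no unique maximising vertex, hence by Theorem~\ref{mainthm} we have $\phi\notin\Sigma(G)$. That is the entire argument in that case---no detour through $c(G,\phi)$ or markings is needed.

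Second, you flag ``$\MM_\pi$ a point $\Rightarrow G\cong\Z^2$'' as the main obstacle and propose to attack it by analysing $r_x,r_y$ directly. This can be done (one eventually shows $r$ has length four and is a commutator), but the paper bypasses it entirely. The paper does not case on $\dim\MM_\pi$; instead it fixes one $\psi$ with $\pm\psi\in\Sigma(G)$, notes $G$ is residually $\GG$ (your Lemma~\ref{lem:zbyfree} step), and cases on $c(G,\psi)$. If $c(G,\psi)\le 1$ then $\ker(\psi)$ has rank at most one, and since $G$ is torsion-free this forces $\ker(\psi)\cong\Z$ and hence $G\cong\Z^2$ (using $b_1(G)=2$). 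If $c(G,\psi)\ge 2$ then Theorem~\ref{mainthm3} gives $\th(\MM_\pi,\psi)\ge 1$, so $\MM_\pi$ is not a point and one finishes exactly as in your final sentence. The payoff of the paper's route is that the ``small'' case is handled by a one-line group-theoretic observation about rank-one kernels rather than by unwinding the combinatorics of the relator.
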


\begin{proof}
If there exists no $\psi\in H^1(G;\Z)$ with the property that both $\psi$ and $-\psi$ lie in $\Sigma(G)$, then we are clearly done. Now suppose such $\psi$ exists. It follows from   Lemma~\ref{lem:zbyfree} that $G$ is residually $\GG$, in particular $G$ is torsion-free.

If $c(G,\psi)=0$, then $G$ is a free group and $\Sigma(G)$ is well-known to be the empty set. If $c(G,\psi)=1$, then it follows from the definition of $c(G,\psi)$ that $\ker(\psi)$ is a group of rank one. Since $G$ is torsion-free it follows that
$\ker(\psi)\cong \Z$. Put differently,  $G$ is a semidirect product of $\Z$ with $\Z$. Since $b_1(G)=2$ we see that $G\cong \Z^2$. If $c(G,\psi)>1$,
then it follows from Lemma~\ref{lem:zbyfree} and Theorem~\ref{mainthm3} that 
 $\th(\MM_\pi,\psi)>0$. This implies that $\MM_\pi$ does not consist of a single point. Since $b_1(G)=2$ and since $\MM_\pi$ has vertices which lie in $H_1(G;\Z)/\mbox{torsion}\subset H_1(G;\R)$ it  follows that there exists a $\phi\in H^1(G;\Z)$ which does not pair maximally with a vertex of $\MM_\pi$. By Theorem~\ref{mainthm} this $\phi$ does not lie in $\Sigma(G)$.
\end{proof}

The paper is organized as follows.
In Section~\ref{section:twoonepres}, we prove some basic facts about marked polytopes and define the marked polytope associated to a nice $(2,1)$--presentation. In Section~\ref{sec:marked polytope via fox calculus}, it is shown how the marked polytope is related to the Fox derivatives of the relators. The proof of Theorem~\ref{mainthm} is given in Section~\ref{sec:proofs of first theorem}, an example in Section~\ref{section:example}, and the proof of Theorem~\ref{mainthm2} in Section~\ref{sec:proof of second theorem}. In Section~\ref{sec:proof of third theorem}, we relate thickness of polytopes to complexity of splittings and prove Theorem~\ref{mainthm3}. We discuss the case of groups which admit a $(2,1)$--presentation but for which the abelianization is not equal to $\Z^2$ in Section~\ref{section:b1}. Our paper is concluded with a list of open questions in Section~\ref{section:questions}. 

\subsection*{Convention.}
Given a ring $R$ we mean by a module a left $R$-module, unless stated otherwise.
Furthermore, we view elements in $R^n$ as row-vectors. An $k\times l$-matrix over $R$ induces a left $R$-module homomorphism $R^k\to R^l$ by right-multiplication on row-vectors.

\subsection*{Acknowledgment.} 
Most of the research was carried out while the first author was visiting the University of Sydney. The first author is very grateful for the hospitality.
The final version of the paper was prepared at the Institut de Math\'ematiques de Jussieu; we thank Elisha Falbel and the AGN \emph{Structures G\'eom\'etriques \& Triangulations} for hosting us. 
We are very grateful to Nathan Dunfield and Robert Bieri for several very helpful comments and suggestions. The first author was supported by the SFB 1085 `Higher invariants', funded by the Deutsche Forschungsgemeinschaft (DFG). The second author is partially supported under the Australian Research Council's Discovery funding scheme (project number DP140100158). 

\section{The marked polytope of a $(2,1)$--presentation}
\label{section:twoonepres}

\subsection{Marked polytopes}

Let $V$ be a real vector space and $Q=\{Q_1,\dots,Q_k\}\subset V$ be a finite  set. The  \emph{$($convex$)$ hull of $Q$} is the set
\[ \PP(Q)=\hull(Q)=\left\{ \sum_{i=1}^k t_iQ_i\,\left|\ \sum_{i=1}^k t_i= 1, \ t_i\geq 0\right.\right\}.\]
A \emph{polytope} in $V$ is a subset of $V$ which is the hull of a finite non-empty subset of $V$. For any polytope $\PP$ there exists a unique smallest subset $\VV(\PP)\subset \PP,$ such that $\PP$ is the hull of $\VV(\PP)$.
 The elements of $\VV(\PP)$ are called the \emph{vertices} of $\PP$. 

A \emph{marked polytope} is a polytope together with a (possibly empty) set of marked vertices. 
Given a finite multiset $Q=[Q_1,\dots,Q_k]\subset V$, we  denote by $\MM(Q)$ the polytope $\PP(Q)$, where we mark each vertex $\MM(Q)$ that has multiplicity precisely  one in $Q.$

\subsection{The Minkowski sum of marked polytopes}
\label{section:minkowski}

Let $V$ be a real vector space and let $\PP$ and $\QQ$ be two polytopes in $V$. The \emph{Minkowski sum of $\PP$ and $\QQ$}
is defined as the set
\[ \PP+\QQ:=\{ p+q\,|\, p\in \PP\mbox{ and }q\in \QQ\}.\]
It is straightforward to see that $\PP+\QQ$ is again a polytope. Furthermore, for each vertex $u$ of $\PP+\QQ$
there exists a unique vertex $v$ of $\PP$ and a unique vertex $w$ of $\QQ$ such that $u=v+w$. Conversely, for each vertex $v$ of $\PP$ there exists a (not necessarily unique) vertex $w$ of $\QQ$ such that $v+w$ is a vertex of $\PP+\QQ$.

If $\PP,\QQ$ and $\RR$ are polytopes with $\PP+\QQ=\RR,$ then we write $\PP=\RR-\QQ$.
Note that 
\[ \PP=\{p\in V\,|\, p+\QQ\subseteq \RR\},\]
in particular given polytopes $\QQ$ and $\RR,$ if  the polytope $\RR-\QQ$ exists, then it is well-defined.

If $\MM$ and $\NN$  are two marked polytopes, then we define the \emph{$($marked$)$ Minkowski sum of $\MM$ and $\NN$} as the Minkowski sum $\MM+\NN$ with set of marked vertices precisely those that are the sum of a marked vertex of $\MM$ and a marked vertex of $\NN$. An example is given in Figure~\ref{fig:summp}.

\begin{figure}[h]
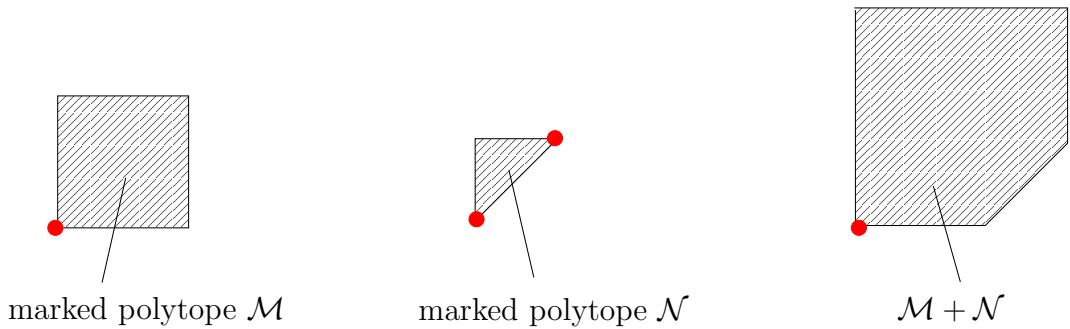
\caption{Example of the Minkowski  sum of two marked polytopes.}\label{fig:summp}
\end{figure}

Now we consider marked polytopes in $\R^2$ in more detail:
\bn
\item We denote by $\XX=[0,1]\times \{0\}$ (resp.\thinspace $\YY=\{0\}\times [0,1]$) the marked polytope in $\R^2$ with both vertices marked. This is a horizontal (resp.\thinspace vertical) interval of length one with marked endpoints. 
\item Given a polytope $\PP$ in $\R^2$ we let $x_0(\PP)$ be the minimal $x$-coordinate of any point in $\PP$ and 
 $x_1(\PP)$ be the maximal $x$-coordinate of any point in $\PP$. The definition of $y_0(\PP)$ and $y_1(\PP)$ is completely analogous.
\item We denote by $x_0^0(\PP)$ (resp.\thinspace $x_0^1(\PP)$) the points on the vertical $x_0$-slice $\PP\cap \{x_0\}\times \R$ of  $\PP$ with minimal (resp.\thinspace maximal) $y$-value.
Similarly define  $x_1^0(\PP)$ and $x_1^1(\PP),$ as well as $y_i^k(\PP)$ with the roles of the $x$ and $y$--coordinates reversed.
All the resulting points are vertices of $\PP$.
We refer to Figure \ref{fig:corners} for an illustration.
\en

\begin{figure}[h]
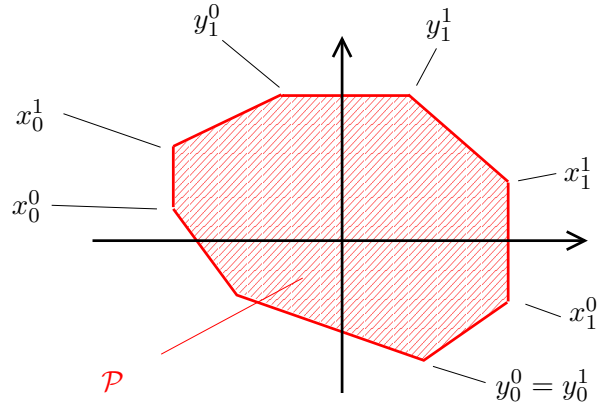
\caption{The corner points $x_i^j(\PP)$ and $y_i^j(\PP)$.}\label{fig:corners}
\end{figure}

\begin{lemma}\label{lem:subtracty}
Let $\NN$ be a marked polytope in $\R^2$. Suppose that for $i=0,1$ the following two conditions are satisfied:
\bn
\item the difference in the $y$-coordinates of $x_i^0(\NN)$ and $x_i^1(\NN)$ is at least one,
\item if the difference in the $y$-coordinates of $x_i^0(\NN)$ and $x_i^1(\NN)$ is precisely one, then either both
$x_i^0(\NN)$ and $x_i^1(\NN)$ are marked or both are not marked.
\en
Then there exists a unique marked polytope $\MM$ with $\MM+\YY=\NN$. 
\end{lemma}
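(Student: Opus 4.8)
The plan is to construct the polytope $\MM$ directly as a Minkowski difference and then verify the hypotheses force the marking to behave correctly. First I would show existence of the \emph{underlying polytope} of $\MM$. Set $\MM := \{p \in \R^2 \mid p + \YY \subseteq \NN\}$, which by the discussion in Section~\ref{section:minkowski} is the only candidate for a polytope satisfying $\MM + \YY = \NN$, and is automatically well-defined once we know it is a polytope with $\MM + \YY = \NN$. Condition (1) is exactly what makes this work: for each vertical slice $\NN \cap (\{a\} \times \R)$, the $y$-extent is at least one precisely at the extreme slices $a = x_0(\NN)$ and $a = x_1(\NN)$ (and hence, by convexity of $\NN$, at every intermediate slice as well, since the slice heights of a polytope in $\R^2$ are concave in $a$ and thus bounded below by the minimum of the two endpoint values). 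Therefore the "erosion'' $\MM$ of $\NN$ by the vertical unit segment $\YY$ is non-empty on the full $x$-range $[x_0(\NN), x_1(\NN)]$, it is convex, and it is cut out by finitely many half-planes (each facet of $\NN$ with outward normal having non-negative $y$-component contributes the same facet to $\MM$, while each facet with negative $y$-component is pushed up by one unit). Hence $\MM$ is a polytope, and a short computation with support functions — $h_{\MM+\YY} = h_\MM + h_\YY$, and $h_\YY(\phi) = \max(0,\phi_2)$ — together with condition (1) gives $\MM + \YY = \NN$.

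Next I would pin down the vertices of $\MM$. By the Minkowski-sum vertex correspondence recalled before Lemma~\ref{lem:subtracty}, every vertex $u$ of $\NN = \MM + \YY$ is uniquely $u = v + w$ with $v$ a vertex of $\MM$ and $w \in \{(0,0),(0,1)\}$ a vertex of $\YY$. Concretely: a vertex $v$ of $\MM$ contributes the vertex $v + (0,1)$ to $\NN$ if the edge/vertex structure at $v$ "opens downward'' (outer normals with negative $y$-component), and $v + (0,0) = v$ otherwise; at generic vertices $v$ of $\MM$ with outer normal cone straddling the horizontal it contributes \emph{both} $v$ and $v+(0,1)$, which are then joined by a vertical edge of $\NN$. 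Running this correspondence backwards recovers each vertex $v$ of $\MM$: it is either a vertex of $\NN$ that is "upward-opening'', or a vertex of $\NN$ minus $(0,1)$ that is "downward-opening'', or the common value $v = (a, b)$ where $\NN$ has a vertical edge from $(a,b)$ to $(a,b+1)$ — and this last case is exactly where condition (1) becomes an equality for $a \in \{x_0(\NN), x_1(\NN)\}$.

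Finally, I would define the marking on $\MM$. A vertex $v$ of $\MM$ is declared marked iff, via the correspondence above, the corresponding vertex (or vertices) of $\NN$ that it produces are marked — and here is where condition (2) is needed: when $v$ sits at the bottom of a vertical unit edge $[(a,b),(a,b+1)]$ of $\NN$ lying on an extreme slice, $v$ gives rise to \emph{two} vertices of $\NN$, namely $x_i^0(\NN) = (a,b)$ and $x_i^1(\NN) = (a,b+1)$, and for the marked Minkowski sum $\MM + \YY$ (whose marked vertices are sums of marked-with-marked) to reproduce the marking of $\NN$ we need these two to be marked consistently — which is precisely hypothesis (2). With the marking so defined, one checks that $\MM + \YY = \NN$ as marked polytopes: a vertex of the marked sum is marked iff it is (marked vertex of $\MM$) $+$ (marked vertex of $\YY$), and since both vertices of $\YY$ are marked this says the produced vertex of $\NN$ is marked iff $v$ is — which is how we set it up. Uniqueness of the marked polytope $\MM$ then follows: the underlying polytope is forced as above, and the marking is forced vertex-by-vertex by demanding $\MM + \YY = \NN$ as marked polytopes, using that every vertex of $\MM$ produces at least one vertex of $\NN$ whose marking status it must match.

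The main obstacle I anticipate is the bookkeeping in the vertex/marking correspondence at the two extreme slices, i.e.\ making rigorous the claim that a vertical unit edge of $\NN$ on the slice $x = x_i(\NN)$ collapses to a single vertex of $\MM$ while a vertical edge of length $>1$ does not, and checking in the length-exactly-one case that condition (2) is both necessary and sufficient for a well-defined marking. Everything else is the standard yoga of support functions and Minkowski (de)composition in the plane.
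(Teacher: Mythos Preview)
Your proposal is correct and complete. The paper's own argument takes a slightly different, more hands-on route: rather than defining $\MM$ as the erosion $\{p \mid p+\YY \subseteq \NN\}$ and invoking support functions, the paper inscribes a height-one parallelogram $\hull(x_0^0, x_0^0+(0,1), x_1^0, x_1^0+(0,1))$ inside $\NN$, observes that (generically) its complement in $\NN$ consists of a ``bottom'' polytope $\PP_0$ and a ``top'' polytope $\PP_1$, and sets $\MM = \PP_0 \cup (\PP_1 - (0,1))$. This is of course the same polytope you construct, seen geometrically as ``slide the top part down by one.'' The treatment of the marking is then identical to yours: for each vertex of $\MM$ choose a vertex of $\YY$ whose sum is a vertex of $\NN$, copy the marking from $\NN$, and use condition~(2) to see that this is independent of the choice.

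What your approach buys is a cleaner uniqueness argument (it drops straight out of the erosion description and the cancellation property of Minkowski sums) and a more uniform treatment that avoids the paper's case split into generic and degenerate configurations. What the paper's approach buys is a concrete picture---the Figure~\ref{fig:subtracty} decomposition---that makes the construction transparent without appealing to support functions or concavity of slice heights. Both are entirely elementary; yours is closer to the standard convex-geometry toolkit, while the paper's is more self-contained and visual.
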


The lemma is an elementary exercise in polytope theory, we therefore merely outline the proof. 

\begin{proof}[Proof $($Sketch$)$.]
Throughout the proof we refer to Figure \ref{fig:subtracty} for an illustration.
For $i,j\in\{0,1\}$ we write $x_i^j=x_i^j(\NN)$ and $y_i^j=y_i^j(\NN)$.
\begin{figure}[h]
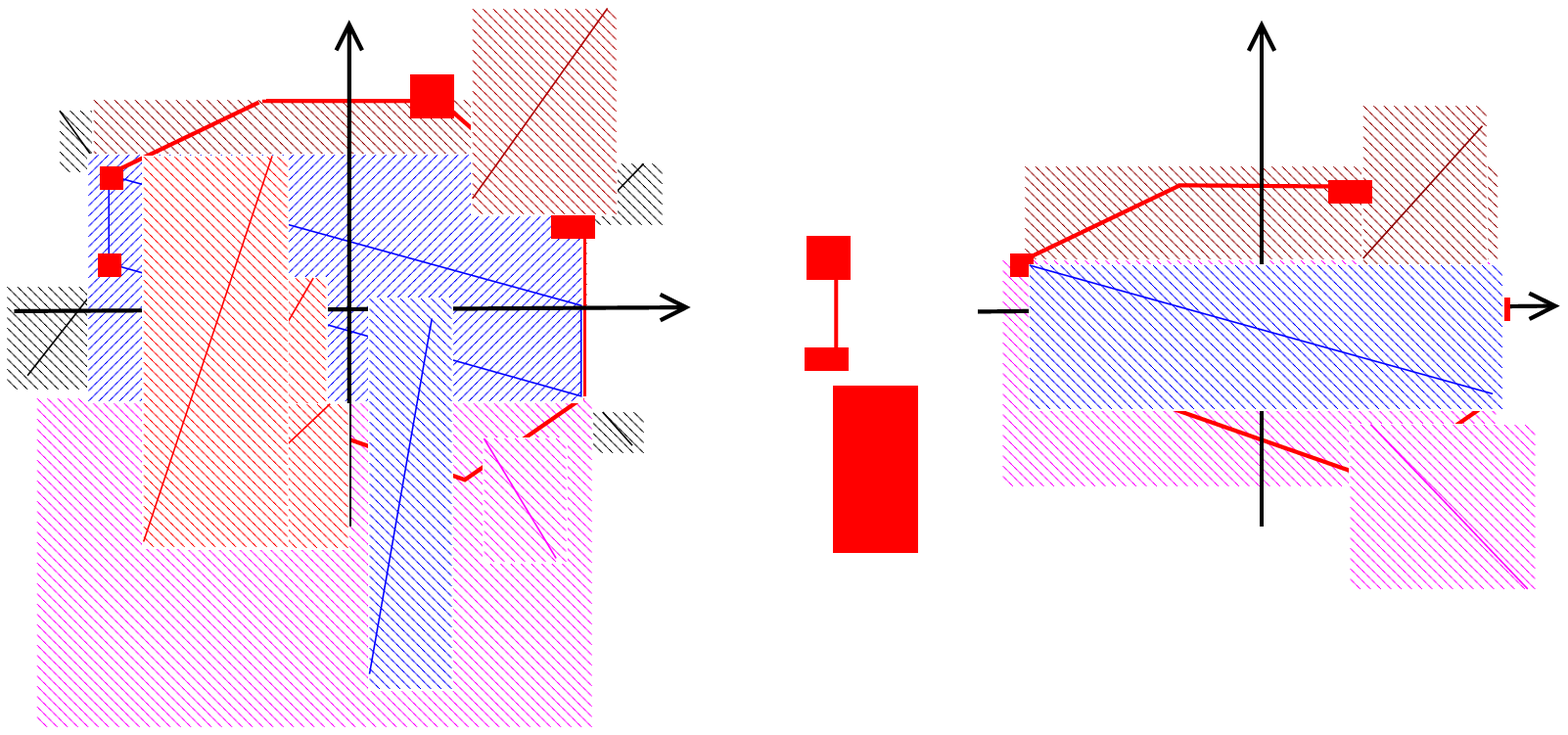
\caption{Subtracting $\YY$.}\label{fig:subtracty}
\end{figure}

We consider the parallelogram $\PP=\hull(x_0^0,x_0^0+(0,1),x_1^0,x_1^0+(0,1))$.
It follows from the assumptions that $\PP$ is contained in $\NN.$ First suppose that the closure of its complement in $\NN$ consists of two polytopes. Denote $\PP_0$ the polytope below $\PP,$ and $\PP_1$ the polytope above $\PP$.

We denote by $\PP_1'$ the polytope obtained by translating $\PP_1$ down by one, and let $\MM$ be the union of $\PP_0$ and $\PP_1'$.
It is straightforward to verify that as polytopes without marking, we have $\MM+\YY=\NN$ and that $\MM$ is the only polytope which has this property.

It remains to mark the appropriate vertices of $\MM.$
For each vertex of $\MM$ there exists a vertex of $\YY$ such that the sum is a vertex of $\NN$.
Mark the vertex of $\MM$ if and only if the vertex of $\NN$ is marked. Using the second hypothesis, it follows that this marking of $\MM$ is well-defined, i.e.\ independent of the choice of the vertex of $\YY$, and that it is the only marking for $\NN$ which has the desired property.

This concludes the generic case. In the degenerate cases, either $\NN=\PP$ or the complement of $\PP$ consists of a single polytope and it is easy to adjust the above arguments.

Finally the uniqueness of $\MM$ is straightforward to verify, we leave this to the reader.
\end{proof}

\begin{corollary}\label{cor:subtractxy}
Let $\NN$ be a marked polytope in $\R^2$. We suppose that for $i=0,1$ the following conditions are satisfied:
\bn
\item  the difference in the $y$-coordinates of $x_i^0(\NN)$ and $x_i^1(\NN)$ is at least one,
\item if  the difference in the $y$-coordinates of $x_i^0(\NN)$ and $x_i^1(\NN)$ is precisely one, then either both
$x_i^0(\NN)$ and $x_i^1(\NN)$ are marked or both are not marked.
\item the difference in the $x$-coordinates of $y_i^0(\NN)$ and $y_i^1(\NN)$ is at least one,
\item if  the difference in the $x$-coordinates of $y_i^0(\NN)$ and $y_i^1(\NN)$ is precisely one, then either both
$y_i^0(\NN)$ and $y_i^1(\NN)$ are marked or both are not marked.
\en
Then there exists a unique marked polytope $\MM$ with $\MM+\XX+\YY=\NN$. 
\end{corollary}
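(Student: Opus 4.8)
The plan is to reduce Corollary~\ref{cor:subtractxy} to Lemma~\ref{lem:subtracty} together with its obvious analogue obtained by interchanging the roles of the $x$- and $y$-coordinates. Call the latter Lemma~\ref{lem:subtracty}$'$: it asserts that if conditions (3) and (4) of the corollary hold for $\NN$, then there is a unique marked polytope with $\MM + \XX = \NN$. Since $\XX$ and $\YY$ play symmetric roles, Lemma~\ref{lem:subtracty}$'$ follows from Lemma~\ref{lem:subtracty} by reflecting everything across the diagonal $\{x=y\}$, which is a linear automorphism of $\R^2$ carrying $\YY$ to $\XX$ and preserving Minkowski sums and markings.

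The strategy is then: first subtract $\YY$, then subtract $\XX$. Concretely, I would argue as follows. Conditions (1) and (2) are exactly the hypotheses of Lemma~\ref{lem:subtracty}, so there is a unique marked polytope $\MM'$ with $\MM' + \YY = \NN$. It remains to check that $\MM'$ satisfies conditions (3) and (4), so that Lemma~\ref{lem:subtracty}$'$ applies and produces a unique marked $\MM$ with $\MM + \XX = \MM'$, whence $\MM + \XX + \YY = \NN$. The main point is therefore to verify that subtracting $\YY$ does not destroy the hypotheses concerning the $y_i^j$ corner points. For the $x$-coordinates this is essentially immediate: adding $\YY = \{0\}\times[0,1]$ does not change any $x$-coordinate, so $x_0(\MM') = x_0(\NN)$ and $x_1(\MM') = x_1(\NN)$; moreover the vertical slices of $\MM'$ over $x = x_0$ and $x = x_1$ are obtained from the corresponding slices of $\NN$ by shrinking their length by exactly one (this is visible in the construction of $\MM$ in the proof of Lemma~\ref{lem:subtracty}, where the parallelogram $\PP$ of height one is removed). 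Hence the extreme $y$-values $y_0^j(\MM')$ and $y_1^j(\MM')$ on these slices are controlled, and one reads off that $y_i^0(\MM')$ and $y_i^1(\MM')$ differ in the $x$-coordinate by the same amount as $y_i^0(\NN)$ and $y_i^1(\NN)$ do, with the same markings inherited via the Minkowski decomposition. Thus conditions (3) and (4) pass to $\MM'$.

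The step I expect to require the most care is precisely this bookkeeping of which corner points survive and keep their markings under the subtraction of $\YY$. One has to check that the vertices $y_i^0(\NN), y_i^1(\NN)$ — the points of $\NN$ with extremal $x$-coordinate, choosing the ones with smallest/largest $y$ — correspond under the decomposition $\NN = \MM' + \YY$ to vertices of $\MM'$ with the same $x$-coordinates, and that the marking of a vertex $v$ of $\MM'$ agrees with that of the vertex $v + w$ of $\NN$ for the appropriate vertex $w$ of $\YY$ (using the last sentence of the proof of Lemma~\ref{lem:subtracty}). This is where condition (4), not just condition (2), is used: it guarantees the markings along the vertical extreme slices of $\NN$ are consistent, so that after the vertical shrink the resulting polytope $\MM'$ still satisfies the compatibility required to subtract $\XX$. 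Once $\MM'$ is shown to meet hypotheses (3)–(4), an application of Lemma~\ref{lem:subtracty}$'$ finishes existence. Uniqueness of $\MM$ is formal: by the remark in Section~\ref{section:minkowski} that $\RR - \QQ$ is well-defined whenever it exists, any $\MM$ with $\MM + \XX + \YY = \NN$ satisfies $(\MM + \XX) + \YY = \NN$, so $\MM + \XX$ equals the unique $\YY$-complement $\MM'$ of $\NN$ from Lemma~\ref{lem:subtracty}, and then $\MM$ is the unique $\XX$-complement of $\MM'$ from Lemma~\ref{lem:subtracty}$'$ — agreeing as marked polytopes by the uniqueness clauses in both lemmas.
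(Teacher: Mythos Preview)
Your approach is essentially the same as the paper's: apply Lemma~\ref{lem:subtracty} using hypotheses (1)--(2), observe that the resulting polytope still satisfies (3)--(4), then apply the $x$--$y$ swapped version of the lemma. The paper simply asserts that ``it is straightforward to see that properties (3) and (4) are preserved'' without spelling out the bookkeeping you describe, so your proposal is a correct and slightly more detailed version of the same argument.
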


\begin{proof}
By our assumptions (1) and (2) we can apply Lemma~\ref{lem:subtracty} to $\NN$. It is straightforward to see that properties (3) and (4) are preserved and we can apply the obvious
version of Lemma~\ref{lem:subtracty} for subtracting $\XX$ instead of $\YY$. It is once again easy to see that the resulting marked polytope is unique.
\end{proof}

\subsection{The marked polytope of a nice $(2,1)$--presentation}
\label{section:defpolytopepi}

Throughout the paper, given a nice $(2,1)$--presentation $\pi=\ll x,y\,|\,r\rr$ we adopt the following notation:
\bn
\item We denote by $l(r)$ the length of $r$ and given $i\in \{0,\dots,l(r)\}$ 
we denote by $r_i$ the product of the first $i$ letters appearing in $r$.
More precisely,  we  write $r=g_1g_2\dots g_{l(r)}$ with $g_1,\dots,g_{l(r)}\in \{x^{\pm 1},y^{\pm 1}\}$, and given $i\in \{0,\dots,l(r)\}$ we  define $r_i:=g_1\cdot \dots \cdot g_i$. 
\item We denote $\eps\co G_\pi\to H_1(G_\pi;\Z)$ the obvious map
 and we view $H_1(G_\pi;\Z)\cong \Z^2$ as a subset of $H_1(G_\pi;\R)$.
Note that $\eps(x)$ and $\eps(y)$ give rise to a basis for $H_1(G_\pi;\R)$ which we will sometimes use to identify $H_1(G_\pi;\Z)$ with $\Z^2$ and  we will use it to identify $H_1(G_\pi;\R)$ with $\R^2$.
\item Given a finite multiset $[g_1,\dots,g_k]$ of elements in $G_\pi$, let $\MM(g_1,\dots,g_k)$ be the marked polyhedron $\MM([\eps(g_1),\dots,\eps(g_k)])$
 in $H_1(G_\pi;\R)$. A vertex $v$ is marked if there is precisely one $g_i$ with $\eps(g_i)=v$.
\en

Now we have the following lemma.

\begin{lemma}\label{lem:defmpi}
Let $\pi=\ll x,y\,|\,r\rr$ be a nice $(2,1)$--presentation. We write
$\NN=\MM(r_0,\dots,r_{l(r)})$.
Then there exists a unique marked polytope $\MM$ in $H_1(G_\pi;\R)=\R^2$ with 
\[ \MM+\XX+\YY=\NN.\]
\end{lemma}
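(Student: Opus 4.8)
The plan is to apply Corollary~\ref{cor:subtractxy} to the marked polytope $\NN=\MM(r_0,\dots,r_{l(r)})$, so the entire task reduces to verifying the four hypotheses of that corollary. By symmetry in $x$ and $y$ it suffices to check hypotheses (1) and (2), i.e.\ the statements about the $y$-coordinates of $x_i^0(\NN)$ and $x_i^1(\NN)$ for $i=0,1$; the hypotheses (3) and (4) then follow by the identical argument with the roles of the two coordinates interchanged, together with cyclic symmetry of the relator. So I would focus the write-up on analysing the left-most vertical slice $\{x_0(\NN)\}\times\R$ and the right-most slice $\{x_1(\NN)\}\times\R$ of $\NN$.

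First I would set up the dictionary between the combinatorics of the walk and the geometry of $\NN$. Recall $r=g_1\cdots g_{l(r)}$ with $g_j\in\{x^{\pm1},y^{\pm1}\}$ and $r_i=g_1\cdots g_i$, and $\NN$ is the convex hull of $\{\eps(r_0),\dots,\eps(r_{l(r)})\}$ with a vertex marked iff exactly one $r_i$ maps to it. Identifying $H_1(G_\pi;\R)$ with $\R^2$ via $\eps(x)=(1,0)$, $\eps(y)=(0,1)$, the $x$-coordinate of $\eps(r_i)$ is the exponent sum of $x$ in the prefix $r_i$, which changes by $\pm1$ exactly at the steps where $g_j\in\{x^{\pm1}\}$. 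The key point is that because $b_1(G_\pi)=2$, the total exponent sum of $x$ in $r$ is $0$ and likewise for $y$ (this is condition (2) of ``nice'' together with $r$ being a relator; indeed $\eps(r)=0$ in $H_1(G_\pi;\Z)$ forces both exponent sums to vanish), so the walk returns to the $x$-coordinate it started from. Consequently, since $r$ is nonempty and cyclically reduced, the walk cannot be constant in the $x$-direction unless it never uses $x^{\pm1}$ at all — but then $G_\pi$ would be a quotient of $\ll x,y\mid w(y)\rr$ and a short argument shows $b_1\neq 2$ or the relator is not cyclically reduced; in any case the $x$-extent $x_1(\NN)-x_0(\NN)$ of $\NN$ is a positive integer, and similarly for $y$. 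This is what will make the ``at least one'' part of hypotheses (1) and (3) plausible, but I still need the vertical \emph{thickness} of $\NN$ over its extreme $x$-slices.

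The heart of the argument is the following claim about the leftmost slice (the rightmost is symmetric): the walk, restricted to the times it sits at the minimal $x$-coordinate, traces out a set of lattice points whose vertical span is at least $1$, and if it equals exactly $1$ then the two extreme lattice points $x_0^0(\NN)$ and $x_0^1(\NN)$ are hit the same number of times modulo the marking dichotomy — more precisely they are both hit exactly once or both hit more than once. To see the span is at least $1$: the first time the walk arrives at the minimal $x$-coordinate it must have just executed a step $g_j=x^{-1}$ (to \emph{decrease} into that column) or it started there; and the last time it is in that column it must execute a step $g_{j+1}=x^{+1}$ to leave (since the walk ends at $x$-coordinate $0 > $ well, $\geq x_0(\NN)$, and if $x_0(\NN)<0$ it must eventually leave the minimal column). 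Between consecutive visits to the minimal column the walk wanders and returns, but crucially, in a cyclically reduced word, at any moment in the minimal $x$-column the walk cannot do $x^{-1}$ (it would leave the half-plane $x\geq x_0(\NN)$, contradicting minimality — wait, it would go to $x_0-1$, contradiction), so immediately before entering a minimal-column lattice point the previous letter was not $x$, and the subword spent in the column is a nonempty reduced word in $y^{\pm1}$ alone, hence monotone is false — but a nonempty reduced $y$-word changes the $y$-coordinate by a nonzero amount, giving span $\geq 1$. For the marking refinement when the span is exactly $1$: the walk enters the column at some $y$-value, moves by a nonempty reduced $y$-word to the other extreme, and to leave does $x^{+1}$; a careful case analysis on whether the column is visited once or several times, using that a reduced $y$-word achieving total displacement $\pm1$ visits each of its two endpoints exactly as many times as it is ``turned around'' there, yields that the two extreme points have the same multiplicity-parity in the sense needed. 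I expect \textbf{this multiplicity bookkeeping — the precise argument that when the vertical span over an extreme column is exactly $1$, the two endpoints are simultaneously marked or simultaneously unmarked — to be the main obstacle}, since it requires tracking not just the geometry but the exact number of times the walk passes through each of those two lattice points, across possibly several disjoint visits to the column. Once the claim is established, hypotheses (1)--(4) of Corollary~\ref{cor:subtractxy} hold, the corollary produces the unique marked polytope $\MM$ with $\MM+\XX+\YY=\NN$, and the lemma follows.
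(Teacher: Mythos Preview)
Your overall strategy --- reduce to Corollary~\ref{cor:subtractxy} and verify its four hypotheses --- is exactly what the paper does. The difference is in the execution: you try to analyse whole excursions of the walk into the extreme column, which is more global than necessary, and you correctly flag the marking condition (hypothesis~(2)) as unresolved. The paper handles both hypotheses with a single short local observation, which I describe next.

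View the indices cyclically as elements of $\Z_{l(r)}$; this disposes of your worry about the walk starting in the extreme column. Since $r$ is cyclically reduced, $\eps(r_{i-1}) \ne \eps(r_{i+1})$ for every $i$. Now suppose $\eps(r_i) = x_0^0(\NN)$. Of the four lattice neighbours of $x_0^0$, two are forbidden: the one to the left (violates minimality of the $x$-coordinate) and the one below (violates minimality of the $y$-coordinate among points of $\NN$ with $x$-coordinate $x_0$). Hence each of $\eps(r_{i-1})$ and $\eps(r_{i+1})$ lies in $\{x_0^0+(0,1),\, x_0^0+(1,0)\}$, and since they are distinct, \emph{exactly one} of them equals $x_0^0+(0,1)$. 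The symmetric statement holds at $x_0^1$, with $x_0^1-(0,1)$ in place of $x_0^0+(0,1)$.

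This already gives hypothesis~(1): the point $x_0^0+(0,1)$ is hit, so the $y$-span over the slice $x=x_0$ is at least $1$. For hypothesis~(2), suppose the span is exactly $1$, so $x_0^1 = x_0^0+(0,1)$. The observation above then sets up a bijection between indices $i$ with $\eps(r_i)=x_0^0$ and indices $j$ with $\eps(r_j)=x_0^1$: each visit to one corner is adjacent to a unique visit to the other, via the unique vertical step incident to it. Hence the two corners have equal multiplicity in the multiset $[r_0,\dots,r_{l(r)}]$ and are therefore simultaneously marked or simultaneously unmarked. No excursion-by-excursion bookkeeping is needed.

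Two small remarks on your write-up: the preliminary discussion of exponent sums and positivity of the $x$-extent is not used anywhere in the argument and can be dropped; and the sentence ``immediately before entering a minimal-column lattice point the previous letter was not $x$'' is backwards (the entry step from the right is $x^{-1}$). Working with the cyclic index set from the start avoids these tangles.
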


In the following we denote by $\MM_\pi$ the  marked polytope of Lemma~\ref{lem:defmpi}.

\begin{proof}
We will prove the lemma by verifying  that the conditions of  Corollary~\ref{cor:subtractxy} are satisfied. 

We write $l=l(r)$ and for $i,j\in\{0,1\}$ we write $x_i^j=x_i^j(\NN)$ and $y_i^j=y_i^j(\NN)$.
Note that $l\geq 1$ since we assumed that $r$ is not the empty word.
Now we view the indices for the $r_i$'s as being elements in $\Z_{l(r)}$. 
Given $i\in \Z_{l}$  we say that \emph{the step at $i$ is horizontal} if 
$\eps(r_{i+1})-\eps(r_i)=(\pm 1,0)$. Similarly we define a vertical step.
We make the following observations:
\bn
\item[(a)] For each $i\in \Z_{l}$ the step is either horizontal or vertical.
\item[(b)] Since $r$ is cyclically reduced we have $\eps(r_{i+2})\ne \eps(r_i)$ for any $i$.
\en

\begin{claim}
Let $i\in \Z_l$. 
\bn
\item If $\eps(r_i)=x_0^0$, then either 
$\eps(r_{i-1})=x_0^0+(0,1)$ or $\eps(r_{i+1})=x_0^0+(0,1)$.
\item If $\eps(r_i)=x_0^1$, then either 
$\eps(r_{i-1})=x_0^0+(0,-1)$ or $\eps(r_{i+1})=x_0^1+(0,-1)$.
\en
\end{claim}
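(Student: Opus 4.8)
The plan is to prove the claim by a purely local analysis of the walk near the point $x_0^0$, using observations (a) and (b). First I would set up notation: write $v = x_0^0 = x_0^0(\NN)$, the vertex of $\NN$ on the extreme-left slice $\{x_0\}\times\R$ with minimal $y$-coordinate. Fix an index $i\in\Z_l$ with $\eps(r_i)=v$. By observation (a), both the step at $i-1$ (entering $r_i$) and the step at $i$ (leaving $r_i$) are either horizontal or vertical. The key point is that a horizontal step from $v$ is impossible: since $v$ has the minimal $x$-coordinate among all $\eps(r_j)$, a horizontal step would have to go to $v+(1,0)$ (a move to $v+(-1,0)$ would exit the hull $\NN$ on the left, contradicting minimality of $x_0$), but then we must rule out that \emph{both} neighbours are $v+(1,0)$ — and this is exactly observation (b), since $\eps(r_{i-1})=\eps(r_{i+1})=v+(1,0)$ would force $\eps(r_{i-1})=\eps(r_{i+1})$, contradicting $\eps(r_{i+2})\ne\eps(r_i)$ applied at index $i-1$ (or more directly, $r_{i-1}$ and $r_{i+1}$ cannot coincide as the walk is locally injective in the relevant sense coming from cyclic reducedness).

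Hmm, let me reconsider the exact mechanism: the cleaner route is to observe that at least one of $\eps(r_{i-1}),\eps(r_{i+1})$ must differ from $v$ in a way that stays in $\NN$, and combined with minimality of the $x$-coordinate at $v$, any such neighbour lying strictly to the left is excluded, so neighbours are among $v+(\pm1,0)$ with the $(-1,0)$ option excluded, or $v+(0,\pm1)$ with $v+(0,-1)$ excluded by minimality of $y$ on the left slice. So each of $\eps(r_{i-1})$ and $\eps(r_{i+1})$ lies in $\{v+(1,0),\,v+(0,1)\}$. If both were $v+(1,0)$ we contradict (b) at index $i-1$; hence at least one of them equals $v+(0,1)$, which is precisely conclusion (1). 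For conclusion (2), I would run the symmetric argument at $x_0^1$: this is the vertex on the left slice with \emph{maximal} $y$-coordinate, so now the moves $v+(-1,0)$ and $v+(0,1)$ are excluded (the latter would exceed the maximal $y$-value on the slice), leaving each neighbour in $\{v+(1,0),\,v+(0,-1)\}$, and (b) again forbids both being $v+(1,0)$, giving conclusion (2). (The slight asymmetry in the statement — $\eps(r_{i-1})=x_0^0+(0,-1)$ in (2) rather than $x_0^1+(0,-1)$ — I would treat as the evident typo it is, or note that when the left slice is a single point $x_0^0=x_0^1$ both read the same.)

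The main obstacle, such as it is, will be handling the boundary/degenerate configurations cleanly: in particular the possibility that the extreme-left slice $\NN\cap(\{x_0\}\times\R)$ is a single point (so $x_0^0=x_0^1$), and making sure the exclusion "a step to $v+(-1,0)$ leaves $\NN$" is stated correctly — it relies on $\NN$ being the convex hull of \emph{all} the $\eps(r_j)$, so no lattice point with $x$-coordinate less than $x_0$ is ever visited, hence $\eps(r_{i\pm1})$ cannot have $x$-coordinate $x_0-1$. I would phrase this as: since $x_0=x_0(\NN)=\min_j (\eps(r_j))_x$ and the step at $i-1$ (resp.\ $i$) changes one coordinate by $\pm1$, the $x$-coordinate of $\eps(r_{i\mp1})$ is at least $x_0$, forcing any horizontal neighbour to be $v+(1,0)$; similarly the $y$-extremality of $x_0^0$ within that slice forbids $v+(0,-1)$. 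Once these two exclusions are in place the argument is a two-line case check, so I would keep the write-up correspondingly short, citing (a), (b), and the defining extremal properties of $x_0^0$ and $x_0^1$ as the only inputs.
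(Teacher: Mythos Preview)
Your argument is correct and uses exactly the same ingredients as the paper's proof: observation (a), observation (b), and the extremal properties defining $x_0^0$ (and $x_0^1$). The only difference is organizational: the paper cases on whether the forward step at $i$ is vertical or horizontal and then invokes (b) to control the backward step, whereas you symmetrically restrict both neighbours to $\{v+(1,0),\,v+(0,1)\}$ and then use (b) to rule out the both-horizontal configuration---this is a cosmetic repackaging of the same logic. Your identification of the $x_0^0$ in part (2) as a typo for $x_0^1$ is also correct.
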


We only prove the first statement, the other statement is proved exactly the same way.
If the step at $i$ is vertical, then it follows from the definition of $x_0^0$ and from (a) that $\eps(r_{i+1})=x_0^0+(0,1)$.
If the step at $i$ is horizontal, then by the definition of $x_0^0$ we have $\eps(r_{i+1})=x_0^0+(1,0)$.
By (b) we now see that the step at $i-1$ is vertical, which by the definition of $x_0^0$ implies that $\eps(r_{i-1})=x_0^0+(0,1)$.
This concludes the proof of the claim.

It  follows immediately from the definitions that $\NN$ satisfies conditions (1) and (2) of Corollary~\ref{cor:subtractxy} for $i=0$.
Exactly the same argument shows that the conditions are satisfied for $i=1$, and that also conditions (3) and (4) are satisfied.
The lemma is thus a consequence of Corollary~\ref{cor:subtractxy}.
\end{proof}

If  $\pi=\ll x,y\,|\,r \rr$  is a nice $(2,1)$--presentation and if $r'$ is a cyclic permutation of the word $r$, then
$\pi'=\ll x,y\,|\,r' \rr$ is also a nice presentation  which presents the same group.
Now we will relate $\MM_\pi$ and $\MM_{\pi'}$.

\begin{lemma}\label{lem:polytopetranslate}
Let $\pi=\ll x,y\,|\,r \rr$ be a nice $(2,1)$--presentation.
Let $r'$ be a cyclic permutation of $r$.  We denote by 
$\pi'=\ll x,y\,|\,r' \rr$ the corresponding presentation.
Then $\MM_{\pi'}$ differs from $\MM_\pi$ by a translation by a vector in $H_1(G_\pi;\Z)$. 
\end{lemma}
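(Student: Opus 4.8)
The plan is to track precisely how the multiset of partial products changes when we pass from $r$ to a cyclic permutation $r'$, and then push that through the subtraction operation of Corollary~\ref{cor:subtractxy}. First I would reduce to the case of a single cyclic shift: if $r = g_1 g_2 \cdots g_l$ then it suffices to treat $r' = g_2 g_3 \cdots g_l g_1$, since an arbitrary cyclic permutation is a composition of such shifts, and a composition of translations is a translation by the sum of the vectors, which again lies in $H_1(G_\pi;\Z)$.

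Next I would compare the multisets $[\eps(r_0),\dots,\eps(r_l)]$ and $[\eps(r_0'),\dots,\eps(r_l')]$ defining $\NN = \MM(r_0,\dots,r_l)$ and $\NN' = \MM(r_0',\dots,r_l')$. Writing $v = \eps(g_1)$, one checks directly that $r_i' = g_1^{-1} r_{i+1}$ for $0 \le i \le l-1$ (indices of $r$ read cyclically), so $\eps(r_i') = \eps(r_{i+1}) - v$; and $r_l' = g_1^{-1} r \cdot g_1 \sim r$ in $G_\pi$, so $\eps(r_l') = 0 = \eps(r_0)$, while also $\eps(r_0') = 0$. Since $r$ is cyclically reduced, $\eps(r_l) = \eps(r_0) + (\text{last step}) $ is a genuine lattice point and the net displacement of the whole walk is $0$ (because $b_1(G_\pi) = 2$ forces $r$ to be a product of commutators in homology, so $\eps(r)=0$). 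Hence the walk for $r'$ is, as an ordered sequence, exactly the walk for $r$ with the first point dropped, the last point (which equals the first) re-appended, and everything translated by $-v$. As a \emph{multiset}, $[\eps(r_0'),\dots,\eps(r_l')] = [\eps(r_0)-v,\dots,\eps(r_l)-v]$: the point $\eps(r_0) = \eps(r_l)$ has the same multiplicity on both sides (we remove one copy at the front and add one copy at the back), and every other entry is just shifted by $-v$. Therefore $\NN' = \NN - v$ as \emph{marked} polytopes, since the multiplicity function — and hence the marking of vertices — is preserved by this relabelling.

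Finally, I would invoke the defining property from Lemma~\ref{lem:defmpi}: $\MM_\pi + \XX + \YY = \NN$ and $\MM_{\pi'} + \XX + \YY = \NN'$. Since $\NN' = \NN - v$, the marked polytope $\MM_\pi - v$ satisfies $(\MM_\pi - v) + \XX + \YY = \NN - v = \NN'$, and so by the uniqueness clause of Corollary~\ref{cor:subtractxy} we get $\MM_{\pi'} = \MM_\pi - v$, a translate by $-v = -\eps(g_1) \in H_1(G_\pi;\Z)$, as desired.

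The main obstacle, and the only place requiring genuine care, is the bookkeeping in the second paragraph: verifying that the multiset (not merely the set or the ordered walk) of partial products is exactly translated by $-v$, which rests on the coincidence $\eps(r_0) = \eps(r_l) = 0$. That in turn uses the niceness hypothesis $b_1(G_\pi) = 2$ to conclude $\eps(r) = 0$ in $H_1(G_\pi;\Z) \cong \Z^2$; without it the endpoint of the walk need not return to the origin and the argument would have to be adjusted. Everything else is a formal consequence of the uniqueness statement already established.
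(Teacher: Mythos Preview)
Your approach is exactly the paper's: compare the multisets coming from the walk, observe they differ by a translation, and then invoke uniqueness of the subtraction of $\XX+\YY$. However, the bookkeeping in your second paragraph contains a genuine slip.

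You claim that the $(l+1)$-element multisets satisfy
\[
[\eps(r_0'),\dots,\eps(r_l')]\;=\;[\eps(r_0)-v,\dots,\eps(r_l)-v].
\]
This is false. With $P_i=\eps(r_i)$ and $P_i'=\eps(r_i')$ one has $P_i'=P_{i+1}-v$ for $0\le i\le l-1$, but $P_l'=0=P_1-v$, \emph{not} $P_0-v=-v$. Thus (before translating) the new sequence is $(P_1,P_2,\dots,P_l,P_1)$, not $(P_1,\dots,P_l,P_0)$: you remove one copy of $P_0$ at the front and add one copy of $P_1$ at the back, and $P_0\ne P_1$. So the two $(l+1)$-element multisets differ at the points $0$ and $v$; for instance with $r=xyx^{-1}y^{-1}$ the translated old multiset has $-v=(-1,0)$ with multiplicity~$2$ while the new multiset has it with multiplicity~$1$.

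The fix is exactly what the paper does: compare the $l$-element multisets $[\eps(r_1),\dots,\eps(r_l)]$ and $[\eps(r_1'),\dots,\eps(r_l')]$, i.e.\ view the walk as a genuine loop indexed by $\Z_l$ rather than a path with coinciding endpoints. For these one has, as multisets,
\[
[\eps(r_1'),\dots,\eps(r_l')]\;=\;[P_2-v,\dots,P_l-v,\,P_1-v]\;=\;[\eps(r_1)-v,\dots,\eps(r_l)-v],
\]
so $\MM(r_1',\dots,r_l')=\MM(r_1,\dots,r_l)-v$ as marked polytopes, and the rest of your argument goes through unchanged. (Note that $\MM(r_1,\dots,r_l)$ and $\MM(r_0,\dots,r_l)$ have the same underlying polytope since $\eps(r_0)=\eps(r_l)$; they can differ only in the marking at the single point $0$.)
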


\begin{proof}
We write $l=l(r)=l(r')$. It is straightforward to see that
$\MM(r'_1,\dots,r'_l)$ is a translate of $\MM(r_1,\dots,r_l)$ by a vector in $H_1(G_\pi;\Z)$. 
The lemma is an immediate consequence of this observation.
\end{proof}

We conclude this section with the following elementary lemma. We will not make use of it in the paper and we leave the proof to the reader.

\begin{lemma}
Given any marked polytope $\MM$ in $\R^2$ with integer vertices there exists a nice $(2,1)$-presentation $\pi$ with $\MM=\MM_\pi$.
\end{lemma}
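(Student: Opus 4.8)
The plan is to construct, given a marked polytope $\MM$ in $\R^2$ with integer vertices, an explicit word $r$ realising $\MM$ as $\MM_\pi$, and the key is to run the construction of Lemma~\ref{lem:defmpi} backwards. First I would pass from $\MM$ to the polytope $\NN:=\MM+\XX+\YY$; since $\MM$ has integer vertices and $\XX,\YY$ have integer vertices, $\NN$ is an integer polytope, and by the discussion of Minkowski sums it automatically satisfies conditions (1)--(4) of Corollary~\ref{cor:subtractxy} (the strict inequalities hold because adding $\XX+\YY$ increases the relevant coordinate spreads by one, and the marking of $\NN$ is inherited coherently from $\MM$, so the "both marked or both unmarked" conditions hold). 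Thus $\MM$ is recovered from $\NN$ as the unique polytope with $\MM+\XX+\YY=\NN$, and it suffices to find a nice $(2,1)$-presentation $\pi$ with $\MM(r_0,\dots,r_{l(r)})=\NN$.

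Next I would realise $\NN$ as the marked convex hull of the trace of a lattice walk. Concretely, I want a closed walk $r_0=0,r_1,\dots,r_l$ on $\Z^2$, each step horizontal or vertical of length one, cyclically reduced (no immediate backtracking, even cyclically), whose trace has convex hull $\NN$ and which passes through exactly the unmarked vertices of $\NN$ at least twice and through the marked vertices of $\NN$ exactly once. The natural choice is to walk once around the boundary $\partial\NN$: traversing the boundary lattice path visits each boundary lattice point exactly once except that we must return to the start, which forces one vertex (the origin) to be visited twice. To fix the marking and the basepoint, I would first translate $\NN$ so that it contains the origin as a non-vertex boundary point (possible as long as $\NN$ has at least two boundary lattice points on some edge; if $\NN$ is a single unit square one argues directly, and in general one can subdivide an edge), start the walk there, go around $\partial\NN$ once, and return; then every vertex of $\NN$ is hit exactly once, so \emph{every} vertex of $\NN$ would be marked. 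To create unmarked vertices, whenever a vertex $v$ of $\NN$ is supposed to be unmarked I would insert a small detour in the word $r$ that revisits $v$: e.g. replace the passage "$\dots a\, v\, b\dots$" through $v$ by "$\dots a\, v\, a^{-1} a\, b\dots$" — more carefully, a detour $v\to v+e\to v$ using a step $e$ pointing into $\NN$, which keeps the step sequence cyclically reduced if inserted with care and does not change the convex hull (since $v+e\in\NN$). After all such insertions the walk is still closed, still cyclically reduced, its trace still has hull $\NN$, and it hits precisely the marked vertices of $\NN$ once. Reading off the steps gives the relator $r=g_1\cdots g_l$; it is non-empty and cyclically reduced by construction.

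Finally I would check niceness: $r$ is non-empty and cyclically reduced by the previous paragraph, and $b_1(G_\pi)=2$ because the walk is closed, so the exponent sums of $x$ and of $y$ in $r$ both vanish, hence the relator abelianises to $0$ and $H_1(G_\pi;\Z)\cong\Z^2$. Then $\MM(r_0,\dots,r_l)$ is by definition the marked hull of the trace of exactly this walk, which we arranged to equal $\NN$ with the correct marking, and Lemma~\ref{lem:defmpi} gives $\MM_\pi+\XX+\YY=\NN=\MM+\XX+\YY$, whence $\MM_\pi=\MM$ by the uniqueness in Corollary~\ref{cor:subtractxy}.

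The main obstacle I expect is the bookkeeping in the detour-insertion step: one must verify that the inserted detours can always be chosen to keep the word \emph{cyclically} reduced (in particular at the two ends of each detour and at the basepoint where the walk closes up) and to leave the convex hull unchanged, simultaneously for all vertices that need to be unmarked, including the possibility that adjacent vertices of $\NN$ are only one lattice unit apart. This is elementary but fiddly; since the paper explicitly leaves the proof to the reader, a proof proposal may legitimately stop at indicating that these local modifications are routine. One clean way to sidestep most of the difficulty is to first enlarge $\NN$ mentally — but since $\NN$ is fixed by $\MM$, instead I would handle it by noting that each edge of $\NN$ has at least one interior lattice point after we (if necessary) first replace $\MM$ by a large integer translate/scaling — however scaling changes $\MM$, so in fact one simply does the careful local analysis; this is the step where the real work lies.
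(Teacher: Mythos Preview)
The paper leaves this lemma entirely to the reader, so there is no proof in the paper to compare against; your overall strategy (pass to $\NN=\MM+\XX+\YY$, realise $\NN$ by a closed lattice walk visiting each vertex with the correct multiplicity, then invoke the uniqueness in Corollary~\ref{cor:subtractxy}) is exactly the intended one and is sound.

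There is, however, one concrete error in your detour step. The move you write down, ``$v\to v+e\to v$'', is a backtrack: as letters in $r$ it reads $e\,e^{-1}$, which cancels, so the resulting word is not reduced (and the version ``$a\,v\,a^{-1}a\,b$'' you offer first has the same defect). The right local modification is a \emph{four}-step loop, e.g.\ a unit square $v\to v+c\to v+c+d\to v+d\to v$ with $c,d$ orthogonal unit steps chosen so that $c\ne g_i^{-1}$ and $d\ne g_{i+1}$ (to avoid cancellation against the incoming and outgoing letters) and so that the three interior points lie in $\NN$ and are not vertices of $\NN$. Because $\NN=\MM+[0,1]^2$, every vertex $v$ of $\NN$ is a corner of a unit lattice square contained in $\NN$, so such $c,d$ always exist; checking that the other three corners of that square are not themselves vertices of $\NN$ (and adjusting the loop in the rare case they are) is part of the bookkeeping you flag.

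A second point you should make explicit: $\NN$ can have edges that are neither horizontal nor vertical, so ``the boundary lattice path'' is not literally a path of unit horizontal/vertical steps. One must replace each diagonal edge by a monotone staircase lying inside $\NN$; again the Minkowski-sum structure $\NN=\MM+[0,1]^2$ guarantees a unit-width band along each edge of $\NN$, so such a staircase exists. This is straightforward but worth saying, since otherwise the phrase ``walk around $\partial\NN$'' is undefined.

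With these two corrections your argument goes through.
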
 

\subsection{Relation of the two definitions of $\mathbf{\MM_\pi}$}
Let $\pi=\ll x,y\,|\,r\rr$ be a nice $(2,1)$--presentation.
We sketched a definition for $\MM_\pi$  in the introduction and using a somewhat different language we gave a more rigorous definition in
in Section~\ref{section:defpolytopepi}.

We obtained both polytopes (without the marking) by the following process:
\bn
\item we first consider the polytope given by  the points $\eps(r_0),\dots,\eps(r_{l(r)})$ in $H_1(G_\pi;\R)=\R^2$,
\item we then shrink the polytope by one in both the $x$-direction and the $y$-direction.
\en
In the introduction we were a little vague in how to assign markings, the argument in Section~\ref{section:defpolytopepi} shows that this can be done in a coherent way.  The task of spelling out the details of why the two definitions are the same is left to the reader.

\section{Interpretation of $\MM_\pi$ in terms of Fox derivatives} 
\label{sec:marked polytope via fox calculus}
In this section we will interpret the marked polytope $\MM_\pi$ in terms of Fox derivatives. This point of view will be crucial in our proofs.

\subsection{The marked polytope for elements of group rings}

Let $G$ be a group.
Throughout the paper, given $f\in \Z[G]$  and given $g\in G$ we denote by $f_g$ the $g$-coefficient of $f$.

We write $V=H_1(G;\R)$ and we denote by $\eps\co G\to V$ the canonical map. 
Given $f\ne 0\in \Z[G]$ we  refer to 
\[ \PP(f):=\PP\left(\{g\,|\,  g\in G\mbox{ with }f_g\ne 0\}\right)\subset V\]
as the \emph{polytope of $f$}. We consider the multiset
 $[|f_g|\cdot g\,|\, g\in G]$ where the notation $|f_g|\cdot g$ means that  $g\in G$ appears $|f_g|$-many times in the multiset. Then we refer to
\[ \MM(f):=\MM\big([\,|f_g|\cdot g\,|\, g\in G]\big)\subset V\]
as the \emph{marked polytope of $f$}. 
We will also need the following definitions.
\bn
\item 
For $v\in V$ we refer to 
\[ f^v:=\sum_{g\in \eps^{-1}(v)}f_gg \]
as the \emph{$v$-component of $f$}.
\item We say that an element $r\in \Z[G]$ is a monomial if it is of the form  $r=\pm g$ for some $g\in G$.
\en
Now we can  formulate the following alternative definition of the marking of the marked polytope $\MM(f)$.

\begin{lemma}\label{lem:defmf}
Let $G$ be a group and let $f\ne 0\in \Z[G]$. A vertex $v$ of $\MM(f)$ is marked if and only if $f_v$ is a monomial.
\end{lemma}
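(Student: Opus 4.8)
The plan is to unwind the definitions on both sides and match them up. Recall that $\MM(f)$ is defined as $\MM([\,|f_g|\cdot g\,|\, g\in G])$, the marked polytope of the multiset in which each $g\in G$ appears with multiplicity $|f_g|$; and by the general convention for marked polytopes of multisets, a vertex of $\MM(Q)$ is marked precisely when it has multiplicity exactly one in $Q$. So first I would observe that the underlying polytope $\PP(f)$ is the convex hull of $\{g : f_g\neq 0\}$, and that $v$ being a vertex of $\MM(f)$ means $v=\eps(g)$ for some unique $g$ with $f_g\neq 0$ among the elements of $G$ mapping to $v$ --- more precisely, $v$ is a vertex of the convex hull of $\eps(\mathrm{supp}(f))$.

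The key point is to compute the multiplicity of a vertex $v$ in the multiset $[\,|f_g|\cdot g\,|\,g\in G]$. An element $g\in G$ contributes to this multiset (with multiplicity $|f_g|$) exactly when $f_g\neq 0$, and it contributes to the fiber over $v$ exactly when $\eps(g)=v$, i.e.\ $g\in\eps^{-1}(v)$. Hence the total multiplicity of $v$ in the multiset equals $\sum_{g\in\eps^{-1}(v)}|f_g|$. Now I would use the definition of the $v$-component: $f^v=\sum_{g\in\eps^{-1}(v)}f_g g$, which is exactly the element the paper is about to call $f_v$ (the statement writes "$f_v$" where the displayed definition writes "$f^v$"; I will treat these as the same object, the $v$-component of $f$). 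Thus the multiplicity of $v$ in the multiset is precisely the $\ell^1$-norm $\sum_{g\in\eps^{-1}(v)}|f_g|$ of the coefficients of $f^v$.

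With this identification the equivalence is immediate. The vertex $v$ is marked $\iff$ its multiplicity in $[\,|f_g|\cdot g\,|\,g\in G]$ is exactly one $\iff$ $\sum_{g\in\eps^{-1}(v)}|f_g|=1$ $\iff$ exactly one $g\in\eps^{-1}(v)$ has $f_g\neq 0$ and for that $g$ one has $|f_g|=1$ $\iff$ $f^v=\pm g$ for a single $g\in G$ $\iff$ $f^v$ is a monomial in the sense defined just above the lemma. I should note at the start that since $v$ is assumed to be a vertex of $\MM(f)$ we have $v\in\eps(\mathrm{supp}(f))$, so $f^v\neq 0$ and the multiplicity is at least one; this is what makes "multiplicity exactly one" equivalent to "$\ell^1$-norm equals one" rather than "at most one".

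Honestly there is no real obstacle here --- the statement is a bookkeeping unwrapping of the multiset convention. The only thing requiring a sentence of care is the bridge between the two notations "$f_v$" (used in the lemma) and "$f^v$" (used in the definition a few lines earlier) for the $v$-component of $f$, and the trivial remark that a sum of nonnegative integers equals $1$ iff exactly one summand is $1$ and the rest are $0$. So the proof I would write is essentially: "By definition the multiplicity of $v$ in the relevant multiset is $\sum_{g\in\eps^{-1}(v)}|f_g|$, the $\ell^1$-norm of the coefficients of the $v$-component $f_v$ of $f$; since $v$ is a vertex of $\MM(f)$ this is a positive integer, and it equals $1$ iff $f_v=\pm g$ for a unique $g\in G$, i.e.\ iff $f_v$ is a monomial."
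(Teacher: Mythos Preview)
Your proposal is correct and is exactly the unwinding of definitions that the statement calls for; the paper itself gives no proof at all and simply presents the lemma as an immediate reformulation of the marking convention. Your observation that the multiplicity of $v$ in the multiset $[\,|f_g|\cdot g\,|\,g\in G]$ equals $\sum_{g\in\eps^{-1}(v)}|f_g|$, together with the remark that this positive integer equals $1$ iff $f^v=\pm g$, is the entire content, and your note about the notational slip ($f_v$ versus $f^v$) is also apt.
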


We will later on need the following lemma.

\begin{lemma}\label{lem:productadd}
Let $G$ be a group and let $f,g\in \Z[G]$. Then the following hold:
\bn
\item If for every vertex $v$ of $\PP(f)$ the element $f^v\in \Z[G]$ is not a zero divisor, then 
\[ \PP(f\cdot g)=\PP(f)+\PP(g).\]
\item If each vertex  of $\MM(f)$ is marked, then
\[ \MM(f\cdot g)=\MM(f)+\MM(g).\]
\en 
\end{lemma}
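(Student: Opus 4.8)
The two statements are of the same flavour, so I would treat them in parallel, deriving (1) first and then bootstrapping to (2). The core combinatorial fact I would rely on is the standard one about Minkowski sums recorded just above: for every vertex $u$ of $\PP(f)+\PP(g)$ there is a \emph{unique} pair of vertices $v$ of $\PP(f)$ and $w$ of $\PP(g)$ with $u=v+w$, and conversely every vertex $v$ of $\PP(f)$ is $\phi$-maximal for some linear functional $\phi$, which then also picks out a (not necessarily unique) vertex of $\PP(g)$. In fact the cleanest route is to fix a generic linear functional $\phi\co V\to\R$ and argue vertex-by-vertex: a point $u$ is a vertex of $\PP(f)+\PP(g)$ iff it is the unique $\phi$-maximum for some $\phi$, and for such $\phi$ the $\phi$-maximal terms of the product $f\cdot g$ come precisely from multiplying the $\phi$-maximal part of $f$ with the $\phi$-maximal part of $g$.

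\textbf{Step 1 (inclusion $\PP(f\cdot g)\subseteq\PP(f)+\PP(g)$).} This holds with no hypotheses: if $(fg)_h\ne 0$ then $h=g_1g_2$ for some $g_1$ with $f_{g_1}\ne 0$ and some $g_2$ with $g_{g_2}\ne 0$, so $\eps(h)=\eps(g_1)+\eps(g_2)\in\PP(f)+\PP(g)$; taking convex hulls gives the inclusion.

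\textbf{Step 2 (reverse inclusion for $\PP$, using the non-zero-divisor hypothesis).} Let $u$ be a vertex of $\PP(f)+\PP(g)$ and choose a linear functional $\phi$ on $V$ that pairs maximally with $u$ among $\VV(\PP(f)+\PP(g))$; perturbing slightly we may assume $\phi$ is injective on the finite set $\eps(\{h: f_h\ne 0\})\cup\eps(\{h:g_h\ne 0\})$, so that $u=v+w$ for a unique vertex $v$ of $\PP(f)$ and unique vertex $w$ of $\PP(g)$, and $v,w$ are the respective $\phi$-maxima. Then $f^v\in\Z[G]$ is exactly the sum of $\phi$-maximal terms of $f$, and $g^w$ the $\phi$-maximal terms of $g$; the $u$-component of $f\cdot g$ equals $f^v\cdot g^w$ (all other products of terms have strictly smaller $\phi$-value and cannot contribute, and in particular cannot cancel into the $u$-level). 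Since $f^v$ is not a zero divisor and $g^w\ne 0$, we get $f^v\cdot g^w\ne 0$, so some $h$ with $\eps(h)=u$ has $(fg)_h\ne 0$, i.e. $u\in\PP(f\cdot g)$. As this holds for every vertex $u$, and $\PP(f\cdot g)$ is convex, we obtain $\PP(f)+\PP(g)\subseteq\PP(f\cdot g)$, proving (1).

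\textbf{Step 3 (the marking, statement (2)).} Assume every vertex of $\MM(f)$ is marked; by Lemma~\ref{lem:defmf} this means $f_v$ is a monomial, equivalently $f^v=\pm g_1$ is a single group element, for every vertex $v$ of $\PP(f)=\PP(f\cdot g)-\PP(g)$. In particular every $f^v$ is a unit times a group element, hence not a zero divisor, so part (1) already gives $\PP(f\cdot g)=\PP(f)+\PP(g)$, and it remains only to match the markings. Fix a vertex $u$ of $\PP(f\cdot g)$ and, as in Step 2, write $u=v+w$ uniquely with $v\in\VV(\PP(f))$, $w\in\VV(\PP(g))$, and with $\phi$ a generic functional realising all three as maxima; then $(f\cdot g)^u=f^v\cdot g^w=(\pm g_1)\cdot g^w$, which is $\pm$(a single group element) times $g^w$. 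Now $u$ is marked in $\MM(f\cdot g)$ iff $(f\cdot g)_u$ is a monomial iff $(\pm g_1)g^w$ is a monomial iff $g^w=g_w$ is a monomial iff $w$ is marked in $\MM(g)$ (using Lemma~\ref{lem:defmf}); and by the Minkowski-sum marking convention $u$ is marked in $\MM(f)+\MM(g)$ iff $v$ and $w$ are both marked in $\MM(f)$ and $\MM(g)$ respectively — but $v$ is always marked by hypothesis, so this is again equivalent to $w$ being marked. Hence the marked vertices of $\MM(f\cdot g)$ and of $\MM(f)+\MM(g)$ coincide, proving (2).

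\textbf{Main obstacle.} The only genuinely delicate point is Step 2's claim that the $u$-component of the product is exactly $f^v\cdot g^w$ with no interference from lower-$\phi$-level terms; this is where the genericity of $\phi$ on the relevant finite point set, together with the uniqueness of the decomposition $u=v+w$ into vertices, is essential — without the non-zero-divisor hypothesis the product $f^v\cdot g^w$ could vanish and the reverse inclusion would fail, which is exactly why the hypothesis appears. Once that is set up carefully everything else is bookkeeping via Lemma~\ref{lem:defmf} and the Minkowski-sum conventions.
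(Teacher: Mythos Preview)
Your proof is correct and follows essentially the same approach as the paper's (very terse) argument: both hinge on the observation that at a vertex $u=v+w$ of $\PP(f)+\PP(g)$ one has $(f\cdot g)^u=f^v\cdot g^w$, so the hypothesis guarantees nonvanishing (for (1)) and preservation of the monomial property (for (2)); you simply make explicit the linear-functional argument that the paper leaves to the reader. One small notational slip: in Step~3 you write ``$f_v$ is a monomial'' and later ``$(f\cdot g)_u$'' where you mean $f^v$ and $(f\cdot g)^u$ (the $v$-component, not the $v$-coefficient), but this does not affect the argument.
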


\begin{proof}
\bn
\item
Let $v$ be a  vertex of $\PP(f)$ and $w$ be a vertex of $\PP(g)$. By assumption it follows that $f^v\cdot g^w\ne 0$.
It follows easily from the definitions that $\PP(f\cdot g)=\PP(f)+\PP(g)$.
\item We will use the characterization of marked vertices given by Lemma~\ref{lem:defmf}. Since monomials are not zero divisors it follows from (1)  that $\PP(f\cdot g)=\PP(f)+\PP(g)$. 
Furthermore, our assumptions on $f$  imply that for any vertex $v$ of $\MM(f)$ and any vertex $w$ of $\MM(g)$ the product 
$f^v\cdot g^w$ is a monomial if and only if $g^w$ is a monomial. 
The statement on marked polytopes  again follows easily from the definitions.
\en
\end{proof}

\subsection{Fox calculus}

In the following we denote by $F$ the free group with generators $x_1,\dots,x_k$.
We  denote by  $\frac{\partial }{\partial x_i}\co \Z[F]\to \Z[F]$ the \emph{Fox derivative with respect to $x_i$}, i.e.\ the unique $\Z$-linear map such that
\[ 
\frac{\partial x_i}{\partial x_i}=1,\quad \frac{\partial x_j}{\partial x_i}=0\mbox{ for $i\ne j$ and with } \frac{\partial uv}{\partial x_i}=\frac{\partial u}{\partial x_i}+u\frac{\partial v}{\partial x_i} \mbox{ for all  $u,v\in F$.}\]
We refer to \cite{Fo53} for details and more information on Fox derivatives.
In the following, given $u\in \Z[F]$ we often write 
\[ u_{x_i}=\frac{\partial u}{\partial x_i}.\]
We denote by $\alpha\co \Z[F]\to \Z$ the augmentation map which is the unique $\Z$-linear map with $\alpha(x_i)=1$ for $i=1,\dots,k$.
The fundamental formula for Fox derivatives (see \cite[p.~551]{Fo53}) says that for any $f\in \Z[F]$ we have
\[  f-\alpha(f)\cdot e =\sum_{i=1}^kf_{x_i}(x_i-1)\]
where $e$ denotes the trivial element in $F$.
For example, if $\pi=\ll x,y|r\rr$ is a $(2,1)$--presentation, then 
\[ r-\alpha(r)\cdot e=r_x(x-1)+r_y(y-1)\in \Z[\ll x,y\rr].\]
But $\alpha(r)=1$ since $r$ is  a word in $x$ and $y$. Furthermore $r=e\in G_\pi$. We thus see that
\be \label{equ:foxzero} r_x(x-1)=-r_y(y-1)\in \Z[G_\pi].\ee

\subsection{Fox derivatives and 1-relator groups}
The following theorem is due to Weinbaum \cite{We72} (see also \cite[Proposition~II.5.29]{LS77}).

\begin{theorem}\label{thm:weinbaum}
Let  $ \pi=\ll x_1,\dots,x_k\,|\, r\rr$
be  a presentation  where $r$ is a cyclically reduced word.
If $w$ is a proper, non-empty subword of $r$, then $w$ represents a non-trivial element in $G_\pi$.
\end{theorem}

\begin{corollary}\label{cor:distinct}
Let  $ \pi=\ll x_1,\dots,x_k\,|\, r\rr$
be  a presentation. 
If $r$ is cyclically reduced, then the summands in
\[ \frac{\partial r}{\partial x_i}=\sum_{j=1}^s \eta_j w_j\]
represent distinct elements in $\Z[G_\pi]$.
\end{corollary}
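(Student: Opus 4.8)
The Fox derivative $r_{x_i} = \sum_{j=1}^s \eta_j w_j$ is obtained by the standard formula: writing $r = g_1 g_2 \cdots g_{l}$ with each $g_m \in \{x_1^{\pm 1}, \dots, x_k^{\pm 1}\}$, each occurrence of $x_i^{\pm 1}$ among the $g_m$ contributes one term. Precisely, if $g_m = x_i$ then the term is $+ (g_1 \cdots g_{m-1})$, while if $g_m = x_i^{-1}$ then the term is $- (g_1 \cdots g_{m-1} x_i^{-1}) = -(g_1 \cdots g_m)$. So up to sign, the group elements $w_j$ appearing are exactly the prefixes $r_{m-1} = g_1 \cdots g_{m-1}$ (when $g_m = x_i$) and the prefixes $r_m = g_1 \cdots g_m$ (when $g_m = x_i^{-1}$), each a proper prefix of $r$ of length strictly between $0$ and $l$ — note that in the first case $m \geq 1$ so the prefix has length $m-1 \geq 0$, and since $g_m = x_i \neq$ last letter issue aside, one checks the relevant prefixes are proper and nonempty subwords except possibly the empty prefix. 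First I would set up this explicit description carefully, tracking which prefixes arise.

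The claim is that these group elements are pairwise distinct in $G_\pi$. Suppose two summands $\eta_j w_j$ and $\eta_{j'} w_{j'}$ with $j \neq j'$ have $w_j = w_{j'}$ in $G_\pi$. The two underlying prefixes, say $u$ and $v$ (as reduced words in the free group, with $u$ a prefix of $r$ of length $a$ and $v$ a prefix of length $b$, $a < b$), then satisfy $u = v$ in $G_\pi$, i.e. $u^{-1} v = e$ in $G_\pi$. Now $u^{-1} v$, after free reduction, is a subword of the cyclic word $r$: indeed $v = u \cdot (g_{a+1} \cdots g_b)$ as words in the free group (no cancellation since both are prefixes of the reduced word $r$), so $u^{-1} v = g_{a+1} \cdots g_b$ freely reduced, which is a subword of $r$. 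The key step is then to apply Weinbaum's theorem (Theorem~\ref{thm:weinbaum}): this subword $g_{a+1} \cdots g_b$ is a proper, non-empty subword of the cyclically reduced word $r$ — proper because $b - a < l$ (since $0 \le a < b \le l$ and not both endpoints extreme, which needs a small check in the boundary cases), non-empty because $a < b$ — hence it represents a non-trivial element of $G_\pi$, a contradiction.

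The main obstacle, and the only point requiring care, is the boundary bookkeeping: verifying that the prefixes arising from Fox differentiation are genuinely \emph{proper} and \emph{non-empty} subwords of $r$, and in particular handling the case $a = 0$ (the empty prefix, from an initial letter $g_1 = x_i$) versus $b = l$, to ensure $g_{a+1}\cdots g_b$ is never all of $r$ or a cyclic rotation covering the whole word. When $g_1 = x_i$, the summand $w_j = e$; this could only coincide with another summand $w_{j'}$ if that $w_{j'}$ equals $e$ in $G_\pi$, but $w_{j'}$ is a proper non-empty subword of $r$, again non-trivial by Weinbaum — so the argument still goes through, one just treats the empty-prefix summand as a separate easy case. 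I would organize the proof as: (i) list the summands explicitly; (ii) reduce distinctness to showing no two distinct prefixes coincide in $G_\pi$; (iii) observe the "difference" of two prefixes is a proper non-empty subword of $r$; (iv) invoke Weinbaum. Everything past step (i) is short.
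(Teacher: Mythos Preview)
Your proposal is correct and follows essentially the same route as the paper: identify the summands of $r_{x_i}$ as initial segments of $r$, observe that any two such segments differ by a proper non-empty subword of $r$, and invoke Weinbaum's theorem. The one boundary case you flag---prefix of length $0$ together with prefix of length $l$---is excluded precisely because $r$ is cyclically reduced (it would force $g_1 = x_i$ and $g_l = x_i^{-1}$); both you and the paper leave this observation implicit, so your write-up should state it explicitly to close the gap you yourself identified.
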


\begin{proof}
We write $ r=x_{m_1}^{\eps_1}x_{m_2}^{\eps_2}\cdot \dots \cdot x_{m_l}^{\eps_l}$ with $\eps_1,\dots,\eps_l\in \{-1,1\}$..
Let  $i\in \{1,\dots,k\}$. We denote by $s$ the number of times $x_i$ appears among $x_{m_1},\dots,x_{m_l}$.
It  follows immediately from the definition of the Fox derivative
 that  there exist $\eta_1,\dots,\eta_s\in \{-1,1\}$ and $0\leq n_1<n_2<\dots<n_s\leq l$ such that
\[ \frac{\partial r}{\partial x_i}=\sum_{j=1}^s \eta_j w_j\]
where for $j=1,\dots,s$ the element $w_j$ is represented by the subword of $r$ consisting of the first $n_j$ letters appearing in $r$, i.e.
\[ w_j=x_{m_1}^{\eps_1}x_{m_2}^{\eps_2}\cdot \dots \cdot x_{m_{n_j}}^{\eps_{n_j}}.\]
The words $w_1,\dots,w_s$ differ by a proper, non-empty subword of $r$.
Thus the desired statement follows from Theorem~\ref{thm:weinbaum}.
\end{proof}

\subsection{Fox derivatives and the marked polytope for a nice $(2,1)$--presentation}
In this section, given a nice $(2,1)$--presentation $\pi=\ll x,y\,|\,r \rr$ we will  express the marked polytope $\MM_\pi$ in terms of the Fox derivatives 
$r_x$ and $r_y$. Throughout this section we will several times  make use of the observation 
that $\MM(x-1)=\XX$ and $\MM(y-1)=\YY$. 

\begin{proposition}\label{prop:foxpolytope}
Let $\pi=\ll x,y\,|\,r \rr$
be a nice $(2,1)$--presentation. Then 
\[ \MM(r_y)=\MM_\pi+\MM(x-1)\mbox{ and }\MM(r_x)=\MM_\pi+\MM(y-1).\]
\end{proposition}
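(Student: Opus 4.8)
The plan is to compute $\MM(r_y)$ directly and match it with $\NN=\MM(r_0,\dots,r_{l(r)})$, then invoke Lemma~\ref{lem:defmpi} together with the cancellation property of Minkowski sums to conclude. By symmetry in $x$ and $y$ only one of the two claimed identities needs a full argument. First I would recall from~\eqref{equ:foxzero} that $r_x(x-1)=-r_y(y-1)$ in $\Z[G_\pi]$, so it suffices to understand $\MM(r_y(y-1))$. Write $r=g_1\cdots g_l$ with each $g_i\in\{x^{\pm1},y^{\pm1}\}$. From the proof of Corollary~\ref{cor:distinct}, $r_y=\sum_{j}\eta_j w_j$ where the $w_j$ run over the prefixes of $r$ that end just before an occurrence of a $y^{\pm1}$-letter (with $\eta_j=\pm1$ according to the sign of that letter), and by Theorem~\ref{thm:weinbaum} the $w_j$ represent \emph{distinct} elements of $G_\pi$; hence $(r_y)_g\in\{0,\pm1\}$ for every $g$, so every vertex of $\MM(r_y)$ is marked and $\MM(r_y)=\PP(r_y)$ with the full-marking convention.

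The combinatorial heart is the following observation: multiplying $r_y$ by $(y-1)$ telescopes. Concretely, if the $y^{\pm1}$-letters of $r$ occur at positions $p_1<\dots<p_s$, then $w_j$ is represented by $r_{p_j-1}$ and $w_j g_{p_j}$ is represented by $r_{p_j}$, so $\eta_j w_j(y-1)$ contributes, up to sign, the two lattice points $\eps(r_{p_j-1})$ and $\eps(r_{p_j})$; summing over $j$ and using that horizontal steps contribute nothing to $r_y$, the multiset $[\,|(r_y(y-1))_g|\cdot g\,]$ is, after the cancellations forced by cyclic reducedness, exactly the multiset $[\eps(r_0),\dots,\eps(r_l)]$ defining $\NN$ — equivalently $\MM(r_y(y-1))=\NN=\MM(r_y)+\MM(y-1)=\MM(r_y)+\YY$. (This is where I would lean on Lemma~\ref{lem:defmf} and Lemma~\ref{lem:productadd}: since every vertex of $\MM(r_y)$ is marked, $\MM(r_y\cdot(y-1))=\MM(r_y)+\MM(y-1)$, so the content is really the identification $\MM(r_y(y-1))=\NN$.) By the same reasoning applied to $r_x$, $\MM(r_x(x-1))=\NN$ and every vertex of $\MM(r_x)$ is marked.

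Now I would finish as follows. Combining the two computations, $\MM(r_y)+\YY=\NN=\MM_\pi+\XX+\YY$ by Lemma~\ref{lem:defmpi}, and since all vertices of $\MM(r_y)$ are marked, Lemma~\ref{lem:productadd}(2) gives $\MM(r_y)+\XX+\YY=\MM(r_y(x-1)(y-1))=\MM(r_x(x-1)(y-1))=\MM(r_x)+\XX+\YY$ wait — more cleanly: from $\MM(r_y)+\YY=\MM_\pi+\XX+\YY$ and the uniqueness of Minkowski subtraction (the paragraph after the definition of $\MM+\NN$, applied via Corollary~\ref{cor:subtractxy}/Lemma~\ref{lem:subtracty} to cancel $\YY$), we get $\MM(r_y)=\MM_\pi+\XX=\MM_\pi+\MM(x-1)$, and symmetrically $\MM(r_x)=\MM_\pi+\YY=\MM_\pi+\MM(y-1)$. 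The main obstacle I anticipate is the bookkeeping in the middle paragraph: verifying that the telescoping of $r_y(y-1)$, after the sign cancellations at consecutive $y$-steps (and the fact that $r_0=r_l=e$ in $G_\pi$ so the endpoints behave correctly), produces precisely the multiset of all partial products $\eps(r_i)$ rather than some sub- or super-multiset — here cyclic reducedness (observation (b) in the proof of Lemma~\ref{lem:defmpi}) and Weinbaum's theorem are exactly what prevent unwanted coincidences, and I would need to check the edge cases where $r$ begins or ends with a power of $y$.
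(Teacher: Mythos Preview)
Your overall strategy---show $\MM(r_y)+\YY=\NN$ and then cancel $\YY$ via Lemma~\ref{lem:subtracty}---is exactly the paper's. The paper streamlines your telescoping step by first cyclically permuting $r$ to begin with $x^{\pm1}$ and writing $r=x^{m_1}y^{n_1}\cdots x^{m_k}y^{n_k}$; then $r_y(y-1)=\sum_i x^{m_1}y^{n_1}\cdots x^{m_i}(y^{n_i}-1)$ expands directly to the $2k$ corner points of the walk, and the Weinbaum argument shows these are pairwise distinct in $G_\pi$. This sidesteps the edge-case bookkeeping you flag.

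There is one genuine error, however. From ``the $w_j$ are distinct in $G_\pi$, hence $(r_y)_g\in\{0,\pm1\}$'' you conclude that every vertex of $\MM(r_y)$ is marked. That does not follow: distinctness in $G_\pi$ does not prevent several $w_j$ from having the same image under $\eps$, and by Lemma~\ref{lem:defmf} a vertex $v$ is marked only when a \emph{single} $g$ with $\eps(g)=v$ contributes. Indeed, once the proposition is established, $\MM(r_y)=\MM_\pi+\XX$ forces $\MM(r_y)$ to have an unmarked vertex whenever $\MM_\pi$ does (as in Section~\ref{section:example}). The fix is painless: apply Lemma~\ref{lem:productadd}(2) using that both vertices of $\MM(y-1)$ are marked rather than making any claim about $\MM(r_y)$; the proof of that lemma is manifestly symmetric in the two factors. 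Your multiset identification also needs rewording: the support of $r_y(y-1)$ only sees endpoints of vertical steps, not the full list $r_0,\dots,r_l$, but the two multisets define the same marked polytope because the omitted points lie in the interior of horizontal segments and hence are never vertices of $\NN$.
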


\begin{proof}
We will only prove that $\MM(r_y)=\MM_\pi+\MM(x-1)$. The other equality is proved completely analogously. 

We denote by $\NN$ the marked polytope we introduced in Lemma~\ref{lem:defmpi} which is given by tracing out the word $r$. Recall that $\MM_\pi$ is the unique marked polytope with $\NN=\MM_\pi+\MM(x-1)+\MM(y-1)$. 
By  Lemma~\ref{lem:subtracty} it thus suffices to show that 
$\MM(r_y)+\MM(y-1)=\NN=\MM_\pi+\MM(x-1)+\MM(y-1)$.

In a certain sense it is obvious that $\MM(r_y)+\MM(y-1)=\NN$. Indeed, this follows from the observation that $\MM(r_y)+\MM(y-1)$ is given by all the vertical
edges traced out in the definition of $\NN$. We thus obtain the same marked polytope.
The remainder of this proof is taken up by making this observation rigorous. 

First we note that it is straightforward to verify that if the statement holds for some $r$, then it also holds for any cyclic permutation of $r$. 
Therefore we can  take a cyclic permutation  of $r$ such that the resulting relator starts with  $x$ or $x^{-1}$. Without loss of generality we can thus assume that  $r=x^{m_1}y^{n_1}\cdot \dots \cdot x^{m_k}y^{n_k}$ where all the $m_i$ and $n_i$ are non-zero.

Now we  note that 
\[ \NN=\MM\left(\bigcup_{i=1}^k \bigcup_{j=0}^{m_i-1} x^{m_1}y^{n_1}\cdot \dots \cdot x^{m_{i-1}}y^{n_{i-1}}x^j\,\,\cup\,\,
\bigcup_{i=1}^k \bigcup_{j=0}^{n_i-1} x^{m_1}y^{n_1}\cdot \dots \cdot x^{m_{i-1}}y^{n_{i-1}}x^{m_i}y^j\right).\]
As we are taking the convex hull we can leave out points which lie in the interior of a segment connecting two other points. Thus we have
\[ \NN=\MM\left(\bigcup_{i=1}^k x^{m_1}y^{n_1}\cdot \dots \cdot x^{m_{i-1}}y^{n_{i-1}}x^{m_i}\,\,\cup\,\,
\bigcup_{i=1}^kx^{m_1}y^{n_1}\cdot \dots \cdot x^{m_{i-1}}y^{n_{i-1}}x^{m_i}y^{n_i}\right).\]

Now we turn to $\MM(r_y)+\MM(y-1)$.
We first note that for any $n\ne 0\in \Z$ we have 
\[ \frac{\partial (y^n)}{\partial y}\cdot (y-1)=y^n-1.\]
It  follows from this observation and from Lemma~\ref{lem:productadd} that 
\[ \ba{rcl} \MM(r_y)+\MM(y-1)&=&\MM(r_y\cdot (y-1))\\
&=&\MM\left(\left(\sum_{i=1}^k x^{m_1}y^{n_1}\cdot \dots \cdot x^{m_{i-1}}y^{n_{i-1}}x^{m_i}\frac{\partial (y^{n_i})}{\partial y}\right)(y-1)\right)\\[2mm]
&=&\MM\left(\sum_{i=1}^k x^{m_1}y^{n_1}\cdot \dots \cdot x^{m_{i-1}}y^{n_{i-1}}x^{m_i}\left(y^{n_i}-1 \right)\right).\ea \]
The same argument as in the proof of Corollary~\ref{cor:distinct} shows that all the summands are pairwise different t in $G_\pi$. Thus it follows from Lemma~\ref{lem:defmf} that 
\[ \ba{rcl} &&\MM\left(\sum_{i=1}^k x^{m_1}y^{n_1}\cdot \dots \cdot x^{m_{i-1}}y^{n_{i-1}}x^{m_i}\left(y^{n_i}-1 \right)\right)\\[2mm]
&=&\MM\left(\bigcup_{i=1}^k x^{m_1}y^{n_1}\cdot \dots \cdot x^{m_{i-1}}y^{n_{i-1}}x^{m_i}y^{n_i}\,\,\cup\,\,
\bigcup_{i=1}^kx^{m_1}y^{n_1}\cdot \dots \cdot x^{m_{i-1}}y^{n_{i-1}}x^{m_i}\right),\ea\]
but this is precisely $\NN$. Thus we showed that $\MM(r_y)+\MM(y-1)=\NN$.
\end{proof}

\section{The proof of Theorem~\ref{mainthm}}
\label{sec:proofs of first theorem}

\subsection{Basic properties of the Bieri--Neumann--Strebel invariant}
\label{section:bns}

Let $G$ be a finitely generated group. The Bieri--Neumann--Strebel \cite{BNS87} invariant $\S(G)$ of $G$ is by definition a subset of $S(G):=(\hom(G,\R)\sm \{0\})/\R_{>0}$. We refer to \cite{BNS87} for the precise definition,
but in order to give a flavor of the invariant we recall three properties:
\bn
\item An epimorphism $\phi\in \hom(G,\Z)$ represents 
an element in $\S(G)$ if and only if it corresponds to an ascending HNN-extension. More precisely, if and only if there exists an isomorphism
\[ f\co G\to \ll A,t\,|\, A=t^{-1}\varphi(A)t\rr\]
where $A$ is a finitely generated group and $\varphi\co A\to A$ is a monomorphism, such that $\phi$ corresponds under $f$ to the epimorphism given by $t\mapsto 1$ and $a\mapsto 0$ for $a\in A$.\\
At this point it is perhaps worth pointing out that at times in the literature an HNN-extension of the form $\ll A,t\,|\, A=t\varphi(A)t^{-1}\rr$ is also referred to as an ascending  HNN-extension. Nonetheless, it follows from the discussion on \cite[p.~456]{BNS87} and the definition of ascending HNN-extension on \cite[p.~465]{BNS87} that our definition of ascending HNN-extension matches the definition of \cite{BNS87}.
\item A homomorphism $\phi\in \hom(G,\Z)$ has the property that $\phi$ and $-\phi$ represent elements in $\S(G)$
if and only if $\ker(\phi)$ is finitely generated.
\item $\Sigma(G)$ is an open subset of $S(G)$.
\en
Here the first two properties follow from \cite[Proposition~4.3]{BNS87} (see also \cite[Corollary~3.2]{Brn87}) and the third one is \cite[Theorem~A]{BNS87}.

\subsection{Twisted homology groups}\label{section:twihom}
Let $X$ be a finite CW-complex with $G=\pi_1(X)$. We denote by $\wti{X}$ the universal cover of $X$. 
The deck transformation group $G$ acts on the left on $\wti{X}$. Therefore the chain complex $C_*(\wti{X})$ is a chain complex of free left $\Z[G]$-modules.

If $R$ is a ring and $M$ is a $(R,\Z[G])$-bimodule, then consider the chain complex
\[ C_*(X;M)=M\otimes_{\Z[G]} C_*(\wti{X}) \]
of left $R$-modules and the corresponding twisted homology groups 
$H_*(X;M)$ which are also left $R$-modules.

\subsection{The chain complex corresponding to a presentation}
\label{section:2complexpi}
Given a presentation $\pi=\ll x_1,\dots,x_k\,|\,r_1,\dots,r_l\rr$ we  denote by $X_\pi$ the corresponding CW-complex with one 0-cell, $k$ 1-cells corresponding to the generators
and $l$ 2-cells corresponding to the relators.
With appropriate lifts of the cells of $X$ to the universal cover $\wti{X}$ the complex $C_*(\wti{X})$ is then given by
\[ 0\to \Z[G_\pi]^l\xrightarrow{\bp \tfrac{\partial r_1}{\partial x_1}&\dots &\tfrac{\partial r_1}{\partial x_k}\\
\vdots &&\vdots \\
\tfrac{\partial r_r}{\partial x_1}&\dots & \tfrac{\partial r_r}{\partial x_k}\ep }\Z[G_\pi]^k\xrightarrow{\bp x_1-1\\\dots \\x_k-1\ep}\Z[G_\pi]\to 0.\]
Here we recall that we always view vectors as row-vectors and that we multiply by matrices \emph{on the right}. This (somewhat confusing) convention is forced on us by the fact that we consider left-modules.

\subsection{Generalized Novikov-homology}
In the following, given a group $G$ and $\phi\in \hom(G,\R)$ we  consider 
\[ \what{\Z[ G]_\phi}:=\left\{ \sum_{g\in G}f_gg\,\left|\, \ba{l}\mbox{for every $C\in \R$ there exist only finitely many $g\in G$}\\
\mbox{with $\phi(g)>C$ and $f_g\ne 0$,}\ea \right.\right\}\]
 the Sikorav-Novikov completion \cite{No81,Si87} of  the group ring $\Z[G]$ with respect to $\phi$.
It is straightforward to verify that $\what{\Z[ G]_\phi}$ is indeed a ring with the obvious addition and the `naive' multiplication.

Given $f\in \Z[G]$ we define
\[ T_\phi(f):=\sum_{g\in G, \phi(g)=m} f_gg\]
where $m:=\min\{ \phi(g)\,|\, f_g\ne 0\}$.  Furthermore, given a ring $R$ and  $r\in R$ we say that $s\in R$ is an \emph{left-inverse to $r$} if $sr=1$.
We recall the following well-known lemma. We leave the straightforward proof to the reader.

\begin{lemma}\label{lem:units}
Let $G$ be a group and let $\phi\colon G\to \R$ be a homomorphism.
Let $f\in \Z[G]$. 
If $f$  has a left-inverse in   $\what{\Z[ G]_\phi}$, then $T_\phi(f)$ has a left-inverse in  $\Z[G]$. Conversely,
if $T_\phi(f)$ is a monomial,  then $f$ has a left-inverse in  $\in \what{\Z[ G]_\phi}$.
\end{lemma}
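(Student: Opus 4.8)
\textbf{Proof plan for Lemma~\ref{lem:units}.}
The plan is to unwind both directions from the definition of the Sikorav--Novikov completion $\what{\Z[G]_\phi}$, using the filtration by the value of $\phi$. The key bookkeeping device is that every nonzero $f\in\what{\Z[G]_\phi}$ has a well-defined ``leading term'' $T_\phi(f)$ supported on the minimal $\phi$-value occurring in $f$ (the minimum exists by the finiteness condition in the definition of $\what{\Z[G]_\phi}$), and that $\phi$ is additive, so $T_\phi(fg)=T_\phi(f)T_\phi(g)$ whenever the right-hand side is nonzero; in particular $\phi$ turns multiplication into a valuation on leading supports.

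For the first implication, suppose $sf=1$ in $\what{\Z[G]_\phi}$. Applying $T_\phi$ and comparing minimal $\phi$-values forces $\phi(T_\phi(s))+\phi(T_\phi(f))=\phi(1)=0$ and $T_\phi(s)\,T_\phi(f)=1$; since $T_\phi(s)\in\Z[G]$ this already exhibits a left-inverse of $T_\phi(f)$ inside $\Z[G]$. Here one must note that $T_\phi(s)T_\phi(f)$ cannot cancel down to $0$ (it is a product of two nonzero group-ring elements, hence nonzero, using that $\Z[G]$ has no additive cancellation of the whole leading block — more precisely $T_\phi(s)T_\phi(f)$ is the $\phi$-minimal component of $sf=1$, which is $1$).

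For the converse, suppose $T_\phi(f)=\pm g_0$ is a monomial. Write $f=\pm g_0(1-u)$ where $u\in\what{\Z[G]_\phi}$ has all terms of strictly positive $\phi$-value (divide out the unit $\pm g_0$ and the scalar). Then the geometric series $\sum_{n\ge 0}u^n$ converges in $\what{\Z[G]_\phi}$: because $\phi(u)>0$, the $n$-th power $u^n$ is supported in $\phi$-values $\ge n\cdot(\text{positive constant})$, so for each bound $C$ only finitely many $n$ contribute terms with $\phi$-value $>C$ at any fixed group element, and the finiteness condition defining $\what{\Z[G]_\phi}$ is satisfied by the sum. Hence $1-u$ is invertible with inverse $\sum u^n$, and $(1-u)^{-1}(\pm g_0)^{-1}=\left(\sum_{n\ge0}u^n\right)(\pm g_0^{-1})$ is a left-inverse (in fact two-sided inverse) of $f$ in $\what{\Z[G]_\phi}$.

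The main obstacle is purely the convergence argument in the converse: one has to check carefully that $\sum_n u^n$ really lies in $\what{\Z[G]_\phi}$, i.e. that the completion is closed under these geometric sums. This is where the precise finiteness condition in the definition of $\what{\Z[G]_\phi}$ — only finitely many $g$ with $\phi(g)>C$ and nonzero coefficient, for every $C$ — does the work, and it is the one place where one cannot be cavalier: for a general (non-finitely-generated, or non-discretely-valued) image $\phi(G)\subset\R$ one must argue that the positive $\phi$-values appearing in $u$ are bounded away from $0$ or otherwise control the tails, which holds here because $u$ has finite support in $\Z[G]$ after clearing denominators — wait, $u$ need not have finite support, but $f$ does, so in fact $u=1\mp g_0^{-1}f$ \emph{does} have finite support in $\Z[G]$, which makes the estimate elementary: $\phi(u)\ge\delta>0$ for $\delta$ the minimum of the finitely many positive values $\phi(g)-\phi(g_0)$ over the support of $f$. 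Everything else is routine manipulation with leading terms, which I would leave to the reader as the statement invites.
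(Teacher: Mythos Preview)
Your overall strategy (pass to the leading $\phi$-component; use a geometric series for the converse) is the intended one, but there is a genuine direction error that breaks both halves of the argument as written.

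By definition $\widehat{\Z[G]_\phi}$ allows only finitely many terms with $\phi$-value \emph{above} any $C$, so supports may run off to $-\infty$ but are bounded above. Consequently, for nonzero $s\in\widehat{\Z[G]_\phi}$ the \emph{maximum} of $\phi$ over the support always exists and the corresponding component lies in $\Z[G]$, whereas the \emph{minimum} need not exist. In your first implication you invoke $T_\phi(s)$ (defined via the minimum), but this is in general undefined: for instance $s=(1+t)^{-1}=t^{-1}-t^{-2}+\cdots$ in $\widehat{\Z[\Z]_{\id}}$ has no minimal term. In your converse, if $T_\phi(f)=\pm g_0$ is the $\phi$-minimal component then $u=1\mp g_0^{-1}f$ has strictly \emph{positive} $\phi$-values, so $\sum_{n\ge0} u^n$ acquires terms of arbitrarily large $\phi$-value and does \emph{not} lie in $\widehat{\Z[G]_\phi}$; your claim that ``only finitely many $n$ contribute terms with $\phi$-value $>C$'' is false (take $u=t$: every $n>C$ contributes). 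A clean counterexample to the converse as literally stated is $G=\Z$, $\phi=\id$, $f=1+2t$: here $T_\phi(f)=1$ is a monomial, yet $f$ has no left-inverse in $\Z((t^{-1}))=\widehat{\Z[\Z]_{\id}}$ since the top coefficient $2$ is not a unit in $\Z$.

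The fix is to read the lemma with the $\phi$-\emph{maximal} component in place of $T_\phi(f)$. Then (i) the maximal component of any nonzero element of $\widehat{\Z[G]_\phi}$ exists and lies in $\Z[G]$, so comparing maximal components of $sf=1$ yields a left-inverse in $\Z[G]$ exactly as you outline; and (ii) factoring out a monomial top term leaves $1-u$ with $u$ of strictly \emph{negative} $\phi$-value, so $\sum u^n$ genuinely lies in $\widehat{\Z[G]_\phi}$ by the finiteness condition and your geometric-series argument works. The paper's later use of the lemma (equating left-invertibility of $r_y$ with ``$\phi$ pairs maximally with a marked vertex'') confirms that the maximum is what is meant; the ``$m=\min$'' in the definition of $T_\phi$ appears to be a slip for $\max$.
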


A group $G$ is called \emph{locally indicable} if any finitely generated non-trivial subgroup of $G$ admits an epimorphism onto $\Z$. 
If $G$ is locally indicable, then the proof of Theorem~13 in  \cite{Hi40}  shows  that monomials are  the only elements in  $\Z[G]$ that have a left-inverse. 
We thus obtain the following variation on Lemma~\ref{lem:units}

\begin{lemma}\label{lem:unitslocallyindicable}
Let $G$ be a locally indicable group and let $\phi\colon G\to \R$ be a homomorphism.
Then $f\in \Z[G]$  has a left-inverse in   $\what{\Z[ G]_\phi}$ if and only 
if $T_\phi(f)$ is a monomial.
\end{lemma}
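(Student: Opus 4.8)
The plan is to deduce Lemma~\ref{lem:unitslocallyindicable} directly from Lemma~\ref{lem:units} by supplying the one missing ingredient: the fact that in the group ring of a locally indicable group the only one-sided units are the monomials $\pm g$. This is exactly the content of Higman's argument in \cite[Theorem~13]{Hi40}, so the proof should be a short assembly job rather than a new computation. First I would recall the precise statement extracted from Higman's proof: if $G$ is locally indicable and $s,f\in\Z[G]$ satisfy $sf=1$, then both $s$ and $f$ are of the form $\pm g$ with $g\in G$. (One reduces to the case where $G$ is finitely generated by passing to the subgroup generated by the supports of $s$ and $f$, then uses an epimorphism $G\twoheadrightarrow\Z$ and an order argument on the top and bottom $\Z$-degrees of the product to force the supports to be singletons; the sign is pinned down because the product of the two nonzero leading coefficients must be $1$.)

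Granting that, the equivalence is immediate in both directions. For the forward direction, suppose $f\in\Z[G]$ has a left-inverse in $\what{\Z[G]_\phi}$. By the second sentence of Lemma~\ref{lem:units}, $T_\phi(f)$ then has a left-inverse in $\Z[G]$; by the Higman fact just recalled, any element of $\Z[G]$ with a one-sided inverse is a monomial, so $T_\phi(f)$ is a monomial. For the converse, if $T_\phi(f)$ is a monomial, then the last sentence of Lemma~\ref{lem:units} gives directly that $f$ has a left-inverse in $\what{\Z[G]_\phi}$. No locally indicable hypothesis is even needed for the converse direction; it is only the forward direction that uses it, via Higman.

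The only genuine obstacle is making sure the cited form of Higman's result is the one actually needed, namely that a \emph{left}-invertible element of $\Z[G]$ (not a two-sided unit) is forced to be a monomial when $G$ is locally indicable. This is fine: the degree/order argument that appears in the proof of \cite[Theorem~13]{Hi40} only uses the relation $sf=1$ and the absence of zero divisors in $\Z[G]$ for $G$ locally indicable (itself Higman's theorem), and it shows that \emph{both} factors have singleton support. So in the write-up I would simply state this as the consequence of \cite{Hi40} being invoked, exactly as the paragraph preceding the lemma already does, and then string together the two implications from Lemma~\ref{lem:units}. The proof is therefore two or three sentences; the bulk of the intellectual content has been front-loaded into the remarks about Higman's theorem and into Lemma~\ref{lem:units} itself, and the authors explicitly say they leave it to the reader.

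\begin{proof}
If $T_\phi(f)$ is a monomial, then Lemma~\ref{lem:units} immediately gives that $f$ has a left-inverse in $\what{\Z[G]_\phi}$; note this implication does not use local indicability. Conversely, suppose $f$ has a left-inverse in $\what{\Z[G]_\phi}$. By Lemma~\ref{lem:units}, $T_\phi(f)$ has a left-inverse in $\Z[G]$. Since $G$ is locally indicable, the proof of Theorem~13 in \cite{Hi40} shows that the only elements of $\Z[G]$ admitting a one-sided inverse are the monomials $\pm g$ with $g\in G$; hence $T_\phi(f)$ is a monomial.
\end{proof}
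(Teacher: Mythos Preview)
Your proof is correct and follows exactly the approach the paper takes: the paper does not give a separate proof of this lemma but simply observes, in the sentence immediately preceding it, that Higman's argument shows left-invertible elements of $\Z[G]$ are monomials when $G$ is locally indicable, and that the lemma is then a direct variation on Lemma~\ref{lem:units}. Your write-up makes this explicit and there is nothing to add.
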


One of the key ingredients in the proof of Theorem~\ref{mainthm} is the 
 following theorem of Sikorav.

\begin{theorem}\label{thm:sikorav}
Given a group $G$ a non-zero homomorphism $\phi\in \hom(G,\R)$ represents an element in $\S(G)$ if and only if 
\[ H_0(G;\what{\Z[ G]_\phi})=0 \mbox{ and }  H_1(G;\what{\Z[ G]_\phi})=0.\]
\end{theorem}

\begin{proof}
Given a finitely generated group $G$ Bieri-Renz \cite{BR88} introduce an invariant $\Sigma(G,\Z)$
that is also a subset of $S(G)$. Let $\phi\in \hom(G,\R)$ be a  non-zero homomorphism $\phi\in \hom(G,\R)$.
Then the following two statements hold:
\bn
\item By \cite[p.~465]{BR88} the homomorphism $\phi$ represents an element in $\Sigma(G,\Z)$ if and only if $-\phi$ represents an element in $\Sigma(G)$.
\item 
The statements of  \cite[p.~86]{Si87}, \cite[p.~953]{Bi07} and \cite[Section~3]{FGS10} imply that $\phi$
 represents an element in $\S(G,\Z)$ if and only if 
\[ H_0(G;\what{\Z[ G]_{-\phi}})=0 \mbox{ and }  H_1(G;\what{\Z[ G]_{-\phi}})=0.\]
\en
Together these two statements imply Theorem \ref{thm:sikorav}.
\end{proof}

The definitions of the Bieri-Neumann-Strebel invariant, the Bieri-Renz invariant and generalized Novikov homology involve various choices and conventions. In order to make sure that the signs are correct as stated in the proof above we consider the Baumslag-Solitar group 
\[ B=\ll a,t| t^{-1}a^2ta^{-1}\rr\] 
with $\phi(t)=1$ und $\phi(a)=0$. As we have seen in Section \ref{section:bns},  $\phi$ corresponds to an ascending HNN-extension, so in particular $\phi\in \Sigma(B)$.  We refer again to the discussion in Section~\ref{section:bns} for the definition of ascending HNN-extension as in \cite{BNS87} and the relationship to the invariant $\Sigma(\pi)$.
In this case $r_a=t^{-1}(1+a)-1$. 
An argument similar to the one provided in the proof of Theorem~\ref{mainthm}
shows that $r_a$ is invertible in $\what{\Z[ G]_\phi}$ but it is not invertible in
$\what{\Z[ G]_{-\phi}}$, which then implies that  $H_1(G;\what{\Z[ G]_\phi})=0$ but 
 $H_1(G;\what{\Z[ G]_{-\phi}})\ne 0$.

\subsection{The  proof of Theorem~\ref{mainthm}}

In this section we will finally give the proof of Theorem~\ref{mainthm}.

\begin{proof}[Proof of Theorem~\ref{mainthm}]
Let $\pi=\ll x,y\,|\, r\rr$ be a nice $(2,1)$--presentation. We write $G=G_\pi$.
Let $\phi\in \hom(G,\R)$ be a  non-zero homomorphism. 
 It follows from the discussion in Section~\ref{section:2complexpi} that  the chain complex
$\what{\Z[ G]_\phi} \otimes_{\Z[G]} C_*(\wti{X}) $ is given by
\[ 0\to \what{\Z[ G]_\phi}\underset{\partial_2}{\xrightarrow{\bp r_x& r_y\ep}}\what{\Z[ G]_\phi}^2\underset{\partial_1}{\xrightarrow{\bp x-1\\y-1\ep}}\what{\Z[ G]_\phi}\to 0.\]

Note that we have $\phi(x)\ne 0$ or $\phi(y)\ne 0$. 
Without loss of generality we can assume that $\phi(x)\ne 0$. 
It follows from  Lemma~\ref{lem:units} that $x-1$ has a left-inverse  in 
$\what{\Z[ G]_\phi}$. In particular this implies that 
$H_0(G;\what{\Z[ G]_\phi})=0$.

\begin{claim}
We have $H_1(G;\what{\Z[ G]_\phi})=0$ if and only if  $r_y$  has a left-inverse in $\what{\Z[ G]_\phi}$.
\end{claim}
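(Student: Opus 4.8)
The plan is to analyze the chain complex
\[ 0\to \what{\Z[ G]_\phi}\xrightarrow{\bp r_x& r_y\ep}\what{\Z[ G]_\phi}^2\xrightarrow{\bp x-1\\y-1\ep}\what{\Z[ G]_\phi}\to 0 \]
directly, using that we have already arranged $\phi(x)\ne 0$, so that $x-1$ has a left-inverse $u\in \what{\Z[ G]_\phi}$ (by Lemma~\ref{lem:units}), and $\partial_1$ is surjective. Since we are working with right multiplication on row-vectors over a ring that need not be commutative, I would be careful throughout to keep track of the left/right conventions from the paper.

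First I would compute $\ker(\partial_1)$. A row vector $(a,b)$ lies in $\ker \partial_1$ precisely when $a(x-1)+b(y-1)=0$, i.e.\ $a(x-1)=-b(y-1)$. Using the left-inverse $u$ of $x-1$ we get $a=-b(y-1)u$, so the map $b\mapsto (-b(y-1)u,\,b)$ gives an isomorphism $\what{\Z[ G]_\phi}\xrightarrow{\cong}\ker(\partial_1)$ of left modules (its inverse is projection to the second coordinate, which is injective on $\ker\partial_1$ because $a$ is determined by $b$). Next I would identify $H_1 = \ker(\partial_1)/\im(\partial_2)$ under this isomorphism. The image of $\partial_2$ is spanned over $\what{\Z[ G]_\phi}$ by the single element $(r_x, r_y)$; pulling this back along the isomorphism above, it corresponds to the left ideal $\what{\Z[ G]_\phi}\cdot r_y$ — provided the identity $r_x = -r_y(y-1)u$ holds, which I would verify. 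This last point is exactly where equation~\eqref{equ:foxzero}, namely $r_x(x-1)=-r_y(y-1)$ in $\Z[G_\pi]$, enters: multiplying \eqref{equ:foxzero} on the right by $u$ and using $(x-1)u=1$ gives $r_x = -r_y(y-1)u$, as needed. Hence under the isomorphism, $H_1(G;\what{\Z[ G]_\phi}) \cong \what{\Z[ G]_\phi}/\what{\Z[ G]_\phi}\,r_y$, which vanishes if and only if $r_y$ has a left-inverse in $\what{\Z[ G]_\phi}$.

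I expect the main obstacle to be purely bookkeeping: making sure that the one-sided inverse of $x-1$ is used on the correct side, that ``left-inverse'' in Lemma~\ref{lem:units} matches the side on which $\partial_1$ acts, and that $(x-1)u = 1$ (rather than $u(x-1)=1$) is what one actually obtains and needs. Because the complex uses right multiplication by matrices on row vectors representing left-module maps, the roles of left and right inverses are easy to swap by mistake, so I would state the convention explicitly at the start of the argument and check each manipulation against it. Once the conventions are pinned down, the identification $H_1 \cong \what{\Z[ G]_\phi}/\what{\Z[ G]_\phi}\,r_y$ is immediate, and the claim follows.
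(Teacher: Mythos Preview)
Your approach is correct and is essentially the same as the paper's: the paper argues the two implications separately by exhibiting the cycle $(-(y-1)(x-1)^{-1},1)$ and, conversely, given $(u,v)\in\ker\partial_1$ setting $f=vr_y^{-1}$, while you package the same computation as an isomorphism $H_1\cong\what{\Z[G]_\phi}/\what{\Z[G]_\phi}\,r_y$ via projection to the second coordinate. One small remark on your bookkeeping worry: you actually need $u$ to be a \emph{two-sided} inverse of $x-1$ (the map $b\mapsto(-b(y-1)u,b)$ lands in $\ker\partial_1$ only if $u(x-1)=1$, and is surjective only if $(x-1)u=1$), but this is harmless since $T_\phi(x-1)$ is a monomial and the geometric series gives a genuine unit---the paper uses $(x-1)^{-1}$ two-sidedly as well without further comment.
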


We first suppose that $H_1(G;\what{\Z[ G]_\phi})=0$.
The row-vector $ ( -(y-1)(x-1)^{-1},\,\, 1 )$
lies in the kernel of $\partial_1$. Since $H_1(G;\what{\Z[ G]_\phi})=0$ it follows that there exists an $f\in \what{\Z[ G]_\phi}$ with 
\[ \partial_2 f=\bp fr_x , fr_y\ep=    \bp -(y-1)(x-1)^{-1},\,\, 1 \ep.\]
We thus showed that $r_y$ has a left-inverse in $\what{\Z[ G]_\phi}$.

Now we suppose that  $r_y$ has a left-inverse  in $\what{\Z[ G]_\phi}$. 
Let $(u, v) \in  \ker(\partial_1)$. This means that 
$0=\partial_1 ( u, v) =u(x-1)+v(y-1).$
We set $f:=vr_y^{-1}$. It then follows from (\ref{equ:foxzero})  that 
\[ \partial_2f=(f r_x, fr_y) =( -vr_y^{-1}\cdot r_y(y-1)(x-1)^{-1},\,\,v) =(-v(y-1)(x-1)^{-1},\,\,v) =(u,v).\]
We thus showed that  $H_1(G;\what{\Z[ G]_\phi})=0$.
This concludes the proof of the claim. 

The claim and the discussion preceding the claim, together with
Theorem~\ref{thm:sikorav} imply that $\phi$ represents an element in $\S(G)$ if and only if $r_y$ has a left-inverse in $\what{\Z[ G]_\phi}$.
Now we have the following claim.

\begin{claim}
The Fox derivative $r_y$ has a left-inverse  in $\what{\Z[ G]_\phi}$ if and only  if $\phi$ pairs maximally with a marked vertex of $\MM(r_y)$.
\end{claim}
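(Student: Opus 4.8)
The plan is to reduce the claim to a statement about the leading term $T_\phi(r_y)$ via Lemma~\ref{lem:units}, and then translate the condition ``$T_\phi(r_y)$ is a monomial'' into the condition ``$\phi$ pairs maximally with a marked vertex of $\MM(r_y)$'' using the description of the marking from Lemma~\ref{lem:defmf} and the combinatorial structure of the Fox derivative $r_y$ given by Corollary~\ref{cor:distinct}. First I would observe that, because $r$ is cyclically reduced, $G_\pi$ is locally indicable (by the Brodskii/Howie theorem, or one may cite it directly), so Lemma~\ref{lem:unitslocallyindicable} applies: $r_y$ has a left-inverse in $\what{\Z[G]_\phi}$ if and only if $T_\phi(r_y)$ is a monomial. (If one prefers not to invoke local indicability, Lemma~\ref{lem:units} gives one implication directly and the reverse implication can be extracted from Corollary~\ref{cor:distinct}, as explained below.)

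Next I would unwind what $T_\phi(r_y)$ is. Writing $r_y=\sum_{j=1}^s \eta_j w_j$ as in Corollary~\ref{cor:distinct}, the $\phi$-minimal part $T_\phi(r_y)$ is the sum of those $\eta_j w_j$ for which $\phi(\eps(w_j))$ is minimal among $\phi(\eps(w_1)),\dots,\phi(\eps(w_s))$. Equivalently, recalling that $-\phi$ picks out the maximal $\phi$-part, after replacing $\phi$ by $-\phi$ if necessary to match the sign conventions in the definition of $T_\phi$ (I will be careful here), the support of $T_\phi(r_y)$ consists exactly of those $\eps(w_j)$ lying on the face of $\PP(r_y)$ on which $\phi$ is extremal, i.e.\ on which $-\phi$ is maximised. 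By Corollary~\ref{cor:distinct} the elements $w_1,\dots,w_s$ of $G_\pi$ are pairwise distinct, so the support of $r_y$ in $\Z[G_\pi]$ has all coefficients $\pm 1$, and a vertex $v$ of $\MM(r_y)$ is marked precisely when exactly one $w_j$ maps to $v$ under $\eps$ (Lemma~\ref{lem:defmf}: $r_y^v$ is a monomial iff $v$ is hit once). Therefore:

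\begin{itemize}
\item $T_\phi(r_y)$ is a monomial $\iff$ the extremal face of $\PP(r_y)$ in the direction of $\phi$ (resp.\ $-\phi$) is a single vertex $v$ and that vertex is hit exactly once by the $w_j$, i.e.\ $r_y^v$ is a monomial;
\item by the definition of ``pairs maximally with'' (the face being a single vertex is exactly the condition $\phi(v)>\phi(w)$ for all other vertices $w$, after sign normalisation) together with Lemma~\ref{lem:defmf}, this holds $\iff$ $\phi$ pairs maximally with a \emph{marked} vertex of $\MM(r_y)$.
\end{itemize}

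So the proof is essentially: (i) reduce left-invertibility in the Novikov ring to $T_\phi(r_y)$ being a monomial; (ii) identify $T_\phi(r_y)$ with the $\phi$-extremal component $r_y^v$ when the extremum is a vertex; (iii) invoke Lemma~\ref{lem:defmf} to see ``$r_y^v$ monomial'' $=$ ``$v$ marked''. I expect the main obstacle to be purely bookkeeping rather than conceptual: getting the sign conventions consistent between the definition of $T_\phi$ (which uses $\min \phi$), the Sikorav completion $\what{\Z[G]_\phi}$, and the notion of ``pairs maximally'' (which uses a $\max$), so that the statement comes out with $\phi$ and not $-\phi$; the Baumslag--Solitar sanity check carried out just before this section is exactly the template for verifying the signs. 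A secondary point needing a line of justification is that the $\phi$-minimal support of $r_y$ really does sit at a single point when it is a monomial — this is immediate once we know all $w_j$ are distinct (Corollary~\ref{cor:distinct}), since then no cancellation can occur and the minimal-$\phi$ part is a nonempty sub-sum of distinct group elements, which is a monomial iff that sub-sum is a single term, iff the $\phi$-extremal face of $\PP(r_y)$ is a vertex hit exactly once.
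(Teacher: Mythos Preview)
Your reduction via Lemma~\ref{lem:unitslocallyindicable} and the translation ``$T_\phi(r_y)$ is a monomial $\Leftrightarrow$ $\phi$ pairs extremally with a marked vertex of $\MM(r_y)$'' is exactly what the paper does in the torsion-free case, and your use of Corollary~\ref{cor:distinct} together with Lemma~\ref{lem:defmf} is the right bookkeeping. The sign issue you flag is genuine but, as you say, purely notational.

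There is, however, a real gap. You assert that $G_\pi$ is locally indicable because $r$ is cyclically reduced, citing Brodskii/Howie. That theorem says a \emph{torsion-free} one-relator group is locally indicable; it does not apply when $r$ is a proper power, and nothing in the definition of a nice $(2,1)$--presentation rules this out (e.g.\ $r=[x,y]^m$ with $m\ge 2$ gives $b_1=2$ but $G_\pi$ has torsion). Your proposed alternative via Lemma~\ref{lem:units} and Corollary~\ref{cor:distinct} does not close the gap either: from Lemma~\ref{lem:units} you only get that $T_\phi(r_y)$ has a left-inverse in $\Z[G_\pi]$, and knowing that $T_\phi(r_y)$ is a $\pm 1$--combination of distinct group elements does not by itself force it to be a monomial when $G_\pi$ has torsion.

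The paper handles the torsion case by a separate argument: writing $r=s^m$ with $m\ge 2$ and $s$ not a proper power, one has $r_y=(1+s+\dots+s^{m-1})s_y$, so every vertex of $\MM(r_y)$ is hit at least $m$ times and none is marked. To see that $r_y$ also fails to have a left-inverse, one passes to $\ol G=\langle x,y\,|\,s\rangle$, which \emph{is} torsion-free and hence locally indicable; the image of $r_y$ there is $m\cdot(\mbox{image of }s_y)$, all coefficients divisible by $m\ge 2$, so Lemma~\ref{lem:unitslocallyindicable} forbids a left-inverse in $\what{\Z[\ol G]_\phi}$, whence none in $\what{\Z[G_\pi]_\phi}$. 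You need to add this case.
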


We first suppose that $G=G_\pi$ is torsion-free.
It follows from \cite{Brj80,Brj84} that $G$ is locally indicable (see also \cite[Theorem~4.2.9]{CZ93} and \cite{Ho00}).
It follows from Lemma~\ref{lem:unitslocallyindicable} that  $r_y$ has a left-inverse  in $\what{\Z[ G]_\phi}$ if and only if $T_\phi(r_y)$ is a monomial.
But this statement in turn is equivalent to  $\phi$ pairing maximally with a marked vertex of $\MM(r_y)$.

Now we consider the case that $G$ has torsion elements.
We will show that $\MM(r_y)$ has no marked vertices and we will show that $r_y$ does not have a left-inverse in  $\what{\Z[ G]_\phi}$.

By  \cite[Theorem~IV.5.2]{LS77} the assumption that $G$ has torsion elements  implies that $r$ can be written as $r=s^m$ where $s$ is a cyclically reduced word that can not be written as a proper power  and with $m\geq 2$. An elementary calculation shows that 
\[ r_y=\frac{\partial}{\partial y}(s^m)=(1+s+\dots+s^{m-1})\frac{\partial s}{\partial y}.\]
Note that $s$ is torsion, in particular it represents the trivial element in $H_1(G;\R)$. Since $m\geq 2$ it  follows that for each vertex $v$ of $\PP(r_y)$ the $v$-component $(r_y)^v$ is the sum of at least $m$ elements in $G$. This in turn implies that no vertex of $\MM(r_y)$ is marked.

In order to proof the claim it remains to show that 
$r_y$ does not have a left-inverse in  $\what{\Z[ G]_\phi}$.
We write  $\ol{\pi}:=\ll x,y\,|\,s\rr$ and $\ol{G}:=G_{\ol{\pi}}$. 
By the above $s$ is not a proper power, which  implies by \cite[Theorem~IV.5.2]{LS77}  that $\ol{G}$ is locally indicable. It follows from $b_1(G)=2$ that $s$ is homologically trivial in $H_1(G;\Z)$ which implies that $H_1(G;\Z)\to H_1(\ol{G};\Z) $ is an isomorphism.
 By a slight abuse of language we denote the map $H_1(\ol{G};\Z)\cong H_1(G;\Z)\xrightarrow{\phi}\Z$ again by $\phi$. 
The projection  $G\to \ol{G}$ induces  ring homomorphisms $f\colon \Z[G]\to \Z[\ol{G}]$ and $f\colon \what{\Z[ G]_\phi}\to \what{\Z[ \ol{G}]_\phi}$.
Since $s$ is trivial in $\ol{G}$ we have 
\[ \textstyle f(r_y)=f\big((1+s+\dots+s^{m-1})(\frac{\partial s}{\partial y})\big)=f(1+s+\dots+s^{m-1})f(\textstyle\frac{\partial s}{\partial y})=mf(\frac{\partial s}{\partial y}).\]
In particular every coefficient of $f(r_y)\in \Z[\ol{G}]$ is divisible by $m\geq 2$. 
Since $\ol{G}$ is locally indicable it follows from Lemma~\ref{lem:unitslocallyindicable} that $f(r_y)$ does 
 does not have a left-inverse in  $\what{\Z[\ol{ G}]_\phi}$.
 But this implies that  $r_y$ does not have a left-inverse in  $\what{\Z[ G]_\phi}$.
This concludes the proof of the claim.

Now the theorem is an immediate consequence of the following claim.

\begin{claim}
Let $\psi\in \hom(G;\R)$ with $\psi(x)\ne 0$. Then $\psi$ pairs maximally with a marked vertex of $\MM(r_y)$ if and only if $\psi$ pairs maximally with a marked vertex of $\MM_\pi$.
\end{claim}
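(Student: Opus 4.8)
The plan is to use Proposition~\ref{prop:foxpolytope}, which tells us that $\MM(r_y)=\MM_\pi+\MM(x-1)=\MM_\pi+\XX$, where $\XX=[0,1]\times\{0\}$ is the horizontal unit interval with both endpoints marked. So the claim reduces to a purely combinatorial statement about Minkowski sums: if $\psi\co V=H_1(G;\R)\to\R$ is a linear functional with $\psi(x)\ne 0$ (equivalently, $\psi$ is non-constant on $\XX$), then $\psi$ pairs maximally with a marked vertex of $\MM_\pi+\XX$ if and only if $\psi$ pairs maximally with a marked vertex of $\MM_\pi$.

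First I would recall the general facts about Minkowski sums collected in Section~\ref{section:minkowski}: every vertex $u$ of $\PP+\QQ$ is uniquely $u=v+w$ with $v$ a vertex of $\PP$, $w$ a vertex of $\QQ$; and if $\psi$ pairs maximally with $u=v+w$ in $\PP+\QQ$, then (since $\psi(u)>\psi(v'+w')$ for all other vertex pairs, and in particular $\psi(v)\ge\psi(v')$ for all vertices $v'$ of $\PP$ with equality forcing the sum not to be strictly maximal) $\psi$ pairs maximally with $v$ in $\PP$ and with $w$ in $\QQ$, and conversely if $\psi$ pairs maximally with $v$ in $\PP$ and with $w$ in $\QQ$ then it pairs maximally with $v+w$ in $\PP+\QQ$. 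Applying this with $\PP=\MM_\pi$, $\QQ=\XX$: a functional pairs maximally with a vertex of $\XX$ precisely when $\psi(x)\ne 0$ (it then picks out the unique endpoint of $\XX$ on which $\psi$ is larger), which is exactly our hypothesis. Hence $\psi$ pairs maximally with some vertex $u=v+w$ of $\MM(r_y)$ iff $\psi$ pairs maximally with some vertex $v$ of $\MM_\pi$; and in that case $w$ is a (marked) vertex of $\XX$.

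It remains to track the markings. By the definition of the marked Minkowski sum, the vertex $u=v+w$ of $\MM(r_y)=\MM_\pi+\XX$ is marked iff both $v$ is marked in $\MM_\pi$ and $w$ is marked in $\XX$. But both vertices of $\XX$ are marked, so the marking condition on $w$ is automatic. Therefore $u=v+w$ is marked in $\MM(r_y)$ iff $v$ is marked in $\MM_\pi$. Combining this with the previous paragraph yields: $\psi$ pairs maximally with a marked vertex of $\MM(r_y)$ iff $\psi$ pairs maximally with a marked vertex of $\MM_\pi$, which is the claim.

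The only subtle point — and the one I would be most careful about — is justifying the first direction of the vertex correspondence: that if $\psi$ pairs maximally with $u$ in $\PP+\QQ$ and $u=v+w$, then $\psi$ pairs \emph{strictly} maximally with $v$ in $\PP$ (not merely weakly). This is where the uniqueness of the decomposition of a vertex of $\PP+\QQ$ is used: if $\psi(v)=\psi(v')$ for some vertex $v'\ne v$ of $\PP$, then $v'+w$ is a point of $\PP+\QQ$ with $\psi(v'+w)=\psi(u)$, and since $\psi$ attains its maximum on $\PP+\QQ$ along a face, every vertex of that face — in particular more than one — would have to decompose with the same maximal $w$, contradicting that $u$ is the \emph{unique} vertex where the strict maximum is attained. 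I would spell this out cleanly; everything else is a direct unwinding of definitions. Given the claim, the theorem follows immediately, since the two preceding claims identify membership of $\psi$ (with $\psi(x)\ne 0$) in $\Sigma(G)$ with $\psi$ pairing maximally with a marked vertex of $\MM(r_y)$, and a symmetric argument using $\MM(r_x)=\MM_\pi+\YY$ handles the case $\psi(y)\ne 0$.
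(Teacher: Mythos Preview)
Your proof is correct and follows essentially the same approach as the paper: both use Proposition~\ref{prop:foxpolytope} to write $\MM(r_y)=\MM_\pi+\MM(x-1)$ and then exploit that both vertices of $\MM(x-1)$ are marked, so the marking transfers cleanly between $\MM_\pi$ and the sum. The only difference is cosmetic: the paper argues concretely by splitting on the sign of $\psi(x)$ and identifying the maximal vertex of $\MM(r_y)$ explicitly as $v+(1,0)$ or $v$, whereas you package the same content as a general statement about maximal vertices of Minkowski sums via the unique-decomposition property from Section~\ref{section:minkowski}. Your ``subtle point'' paragraph is more elaborate than needed --- once you know the $\psi$-maximal face of $\PP+\QQ$ is a single point $u=v+w$, any distinct point $v'+w$ with $\psi(v')\ge\psi(v)$ already contradicts this --- but the reasoning is sound.
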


By Proposition~\ref{prop:foxpolytope} we have $\MM(r_y)=\MM_\pi+\MM(x-1)$.
We first suppose that $\psi$ pairs maximally with a marked vertex $v$ of $\MM_\pi\subset H_1(G;\R)=\R^2$. If $\psi(x)>0$, then $\psi$ pairs maximally with $v+(1,0)\in \MM(r_y)=\MM_\pi+\MM(x-1)$. In particular $v+(1,0)$ is a vertex of $\MM(r_y)$, and as the sum of two marked vertices it is also marked. If $\psi(x)<0$, then almost the same argument shows that $v$ itself is the desired marked vertex of $\MM(r_y)$.

Now suppose that $\psi$ pairs maximally with a marked vertex $w$ of $\MM(r_y)$.
A slight variation on the argument above shows the following:
if $\psi(x)>0$, then $v-(1,0)$  is the desired marked vertex of $\MM_\pi$,
and if $\psi(x)<0$, then $v$ is again the desired marked vertex.
\end{proof}

\section{Example}
\label{section:example}

We consider again the example which was already studied by Brown \cite[Section~4]{Brn87}.
Namely let
\[ \pi=\ll x,y\,|\, x^{-1}y^{-1}xy^2x^{-1}y^{-1}x^{2}y^{-1}x^{-1}yx^{-1}yxy^{-1}\rr.\]
A direct calculation  shows that
\[ \ba{rcl} r_x&=&-x^{-1}+x^{-1}y^{-1}-x^{-1}y^{-1}xy^2x^{-1}+x^{-1}y^{-1}xy^2x^{-1}y^{-1}\\
&&x^{-1}y^{-1}xy^2x^{-1}y^{-1}x-x^{-1}y^{-1}xy^2x^{-1}y^{-1}x^{2}y^{-1}x^{-1}\\
&&-x^{-1}y^{-1}xy^2x^{-1}y^{-1}x^{2}y^{-1}x^{-1}yx^{-1}+x^{-1}y^{-1}xy^2x^{-1}y^{-1}x^{2}y^{-1}x^{-1}yx^{-1}yx.\ea \]
By Corollary~\ref{cor:distinct} all these terms represent distinct elements in $G_\pi$. We sort these terms according to their abelianizations.
We see that
\[  \ba{rcl} r_x&=&\hspace{0.325cm}
x^0y^0\cdot x^{-1}y^{-1}xy^2x^{-1}y^{-1}x\\
&&+  x^{-1}y^{-1}\\
&&-x^0y^{-1}\cdot x^{-1}y^{-1}xy^2x^{-1}y^{-1}x^{2}y^{-1}x^{-1}y\\
&&+ x^{-1}y^0\cdot (-1+x^{-1}y^{-1}xy^2x^{-1}y^{-1}x-x^{-1}y^{-1}xy^2x^{-1}y^{-1}x^{2}y^{-1}x^{-1}y)\\
&&+x^{-1}y\cdot (-x^{-1}y^{-1}xy^2x^{-1}y^{-1}x+
x^{-1}y^{-1}xy^2x^{-1}y^{-1}x^{2}y^{-1}x^{-1}y).\ea \]
Therefore the polytope corresponding to $r_y$ is spanned by $(0,0), (-1,-1), (0,-1), (-1,0)$  and $(-1,1)$.
The  vertices of this polytope are $(0,0), (-1,-1), (0,-1)$ and $(-1,1)$, among which $(0,0), (-1,-1)$ and $(0,-1)$ are marked and the vertex $(-1,1)$ is unmarked.

In Figure \ref{fig:brown} we show how to obtain $\MM(r_x)$ and $\MM_\pi$
and we  indicate the set of all $\phi$'s which pair maximally with a marked vertex of $\MM_\pi$.

\begin{figure}[h]
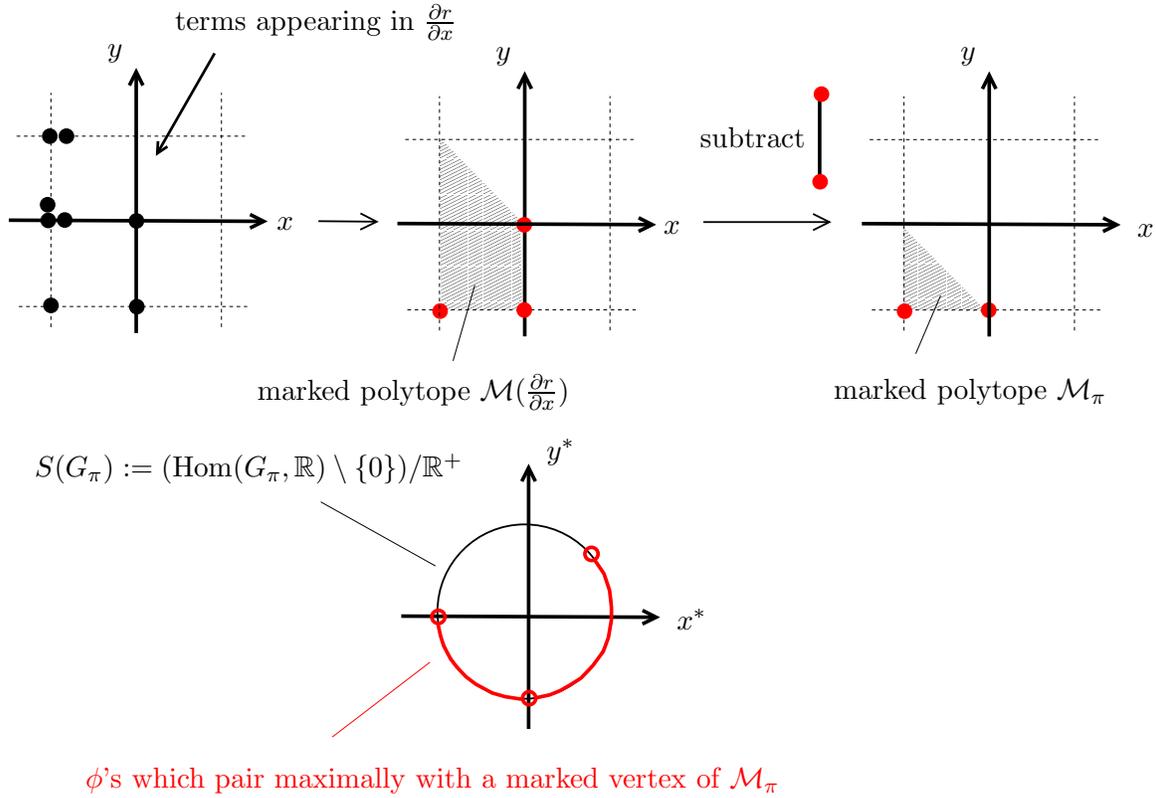\label{fig:brown}
\caption{The marked polytope of Brown's example.}
\end{figure}

\section{Proof of Theorem~\ref{mainthm2}}
\label{sec:proof of second theorem}

We denote by $\GG$ the class of all groups that are torsion-free and elementary amenable. Note that $\GG$ is closed under taking subgroups and finite direct products. We say that a group $G$ is \emph{residually $\GG$} if given any non-trivial $g\in G$ there exists a homomorphism $\a\co G\to \G$ with $\G\in \GG$ such that $\a(g)$ is non-trivial.

For the reader's convenience we recall the statement of  Theorem~\ref{mainthm2}.\\

\noindent \textbf{Theorem~\ref{mainthm2}}\emph{
Let  $G$ be a group which admits a nice $(2,1)$--presentation $\pi=\ll x,y\,|\, r\rr$.
If $G$ is residually $\GG$, then the polytope $\MM_\pi\subset H_1(G;\R)$ is an invariant of the group $G$ (up to translation).}\\

The following lemma gives a criterion for when the hypothesis in 
Theorem~\ref{thm:polytope} is satisfied. 

\begin{lemma} \label{lem:zbyfree}
Let  $G$ be a group which admits a $(2,1)$--presentation $\pi$.
If  there exists a $\phi\in S(G)$ such that both $\phi$ and $-\phi$ lie in $\S(G)$, then $G$ is residually  a torsion-free solvable group, in particular $G$ is residually $\GG$.
\end{lemma}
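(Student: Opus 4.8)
The plan is to exploit property (2) of the Bieri--Neumann--Strebel invariant recalled in Section~\ref{section:bns}: since $\phi$ and $-\phi$ both lie in $\S(G)$, the kernel $K:=\ker(\phi)$ is finitely generated. Moreover, because $\phi$ is a non-trivial map $G\to\R$ factoring through $H_1(G;\Z)$, and the presentation $\pi$ has deficiency one, I expect to argue that $K$ is in fact a free group. The cleanest route is via Stallings' fibration / Bieri--Strebel theory for groups of deficiency one: a $(2,1)$--presentation gives a two-dimensional $K(G,1)$-complex (the presentation complex $X_\pi$ of Section~\ref{section:2complexpi}, once one knows it is aspherical, which holds for one-relator groups), and the infinite cyclic cover corresponding to $\phi$ is homotopy equivalent to a complex whose homology is concentrated in degree at most one; finite generation of $K$ together with this forces $K$ to have a free resolution of length one, hence (being finitely generated of cohomological dimension $\le 1$) to be free by Stallings--Swan. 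Alternatively, and perhaps more in the spirit of the paper, one can invoke that $G$ is then an ascending \emph{and} descending HNN extension over $K$ with stable letter $t$ mapping to a generator of $\im\phi\cong\Z$; ascending on both sides forces the associated monomorphism $K\to K$ to be an isomorphism, so $G\cong K\rtimes_\theta\Z$ for some automorphism $\theta$ of the finitely generated group $K$. Then $H_2(G;\Z)$ can be computed from the Wang sequence, and comparing with the fact that $X_\pi$ has a single $2$-cell shows $H_2(K;\Z)=0$; combined with finite generation this again yields that $K$ is free.

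Granting that $K$ is free, I would finish as follows. Write $G=K\rtimes_\theta\Z$ with $K$ free. Every free group is residually (finitely generated free nilpotent), hence residually torsion-free nilpotent; but I need residual finiteness-type statements \emph{for $G$}, equivariantly with respect to $\theta$. The standard tool is the lower central series $K=K_1\supseteq K_2\supseteq\cdots$, which is preserved by $\theta$, so each $K/K_n$ is a finitely generated torsion-free nilpotent group carrying an induced $\Z$-action, and $G/(K_n\rtimes 1)\cdot(\text{stuff})$—more precisely the quotient $G_n:=(K/K_n)\rtimes_\theta\Z$—is a finitely generated torsion-free solvable (in fact polycyclic-by-nothing: metabelian-by-nilpotent, hence solvable) group. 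Since $\bigcap_n K_n=\{1\}$ in a free group, any non-trivial $g\in G$ either has non-trivial image in $\Z$ (and survives in $G_1=\Z$), or lies in $K\setminus\{1\}$ and hence survives in some $G_n$. Each $G_n$ is torsion-free solvable, so in particular lies in $\GG$, giving the conclusion.

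The main obstacle I anticipate is the step asserting that $K=\ker(\phi)$ is free. Finite generation of $K$ comes for free from BNS property (2), but upgrading this to freeness requires knowing that the presentation complex of a one-relator group is aspherical (Lyndon's theorem, valid precisely because the relator, being cyclically reduced and—after the reductions in the paper—not a proper power would be needed for the torsion-free case; in general one uses that $G$ has a one-relator structure and $\mathrm{cd}$ at most two) and then running a cohomological-dimension argument on the infinite cyclic cover to invoke the Stallings--Swan theorem characterizing free groups as the groups of cohomological dimension $\le 1$. One must be slightly careful that $G$ need not be torsion-free a priori, but a group with a non-trivial homomorphism to $\Z$ and all of whose... — in fact the hypothesis that $-\phi,\phi\in\S(G)$ together with $K$ finitely generated and $G$ having a balanced presentation is exactly the situation where $G$ fibers, and torsion would obstruct $\mathrm{cd}(K)\le 1$; so torsion-freeness of $G$ falls out along the way rather than needing to be assumed. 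Once freeness of $K$ is in hand, the residual-solvability argument via the lower central series is routine.
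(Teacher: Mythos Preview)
Your approach is essentially the same as the paper's, with two minor variations. First, the paper reduces to an integral $\phi$ via the openness of $\Sigma(G)$ before invoking the BNS property (2); you should make this step explicit, since property (2) as stated applies to $\phi\in\hom(G,\Z)$. Second, for the freeness of $K=\ker(\phi)$ the paper simply cites Brown \cite[p.~487]{Brn87} and Bieri \cite[Corollary~B]{Bi07}, which give freeness directly for any deficiency-one group with $\pm\phi\in\Sigma(G)$; your cohomological-dimension sketch via asphericity of $X_\pi$ and Stallings--Swan works but needs care in the proper-power case (where $X_\pi$ is not aspherical), so the citation is cleaner. Finally, the paper filters by the \emph{derived} series of the free group $F=K$ rather than the lower central series; both are characteristic with trivial intersection and torsion-free abelian successive quotients, so either yields that each $G/F^{(n)}$ (respectively $G/\gamma_n(F)$) is torsion-free solvable and the argument concludes identically.
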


The criterion from Lemma \ref{lem:zbyfree} applies to the example provided  in Section~\ref{section:example}. Indeed, the homomorphism $\phi\colon G_\pi\to \Z$ defined by $\phi(x)=2$ and $\phi(y)=1$ has the property that $\phi$ pairs maximally with the marked vertex $(-1,-1)$ and  $-\phi$  pairs maximally with the marked vertex $(0,-1)$. It follows from Theorem~\ref{mainthm} that both $\phi$ and $-\phi$ represent elements  in $\S(G)$.

\begin{proof}
Let  $G$ be a group which admits a $(2,1)$--presentation $\pi$ and suppose $\phi\in \hom(G,\R)$ is a homomorphism such that both $\phi$ and $-\phi$ represent elements in $\S(G)$.
It follows from the openness of $\S(G)$ that without loss of generality we can assume that $\phi$ takes values in $\Z$, i.e.\ that  $\phi\in \hom(G,\Z)$. Recall, see Section~\ref{section:bns}, that the existence of such a $\phi$  implies that $\ker(\phi)$ is a finitely generated group.

Since $G$  admits a $(2,1)$--presentation 
it  follows from \cite[p.~487]{Brn87}, see also \cite[Corollary~B]{Bi07}, that $\ker(\phi)$ is a free group. This implies that  $G$  is isomorphic to a semidirect product $\Z\ltimes F$ where $F$ is a  free group.
We then consider the filtration
\[ G\supset F\supset F^{(1)}\supset F^{(2)}\supset \dots \]
where $F^{(n)}$ denotes the $n$-th group in the derived series of $F$. 
Each successive quotient is a free abelian group. Also note that each $F^{(n)}$ is characteristic in $F$ and it is thus a normal subgroup of $G$. It follows that each quotient
$G/F^{(i)}$ is a torsion-free solvable group. Now the lemma follows from the well-known fact that $\cap F^{(i)}$ is trivial.
\end{proof}

\subsection{The Ore localization of group rings}\label{section:ore}

Let $\G$ be a group which lies in $\GG$.
It follows from \cite[Theorem~1.4]{KLM88} that the group ring $\Z[\G]$ is a domain, i.e.\ it has no non-zero element which is a left or right zero-divisor.
Since  $\G$ is in particular amenable  it follows from \cite[Corollary~6.3]{DLMSY03} that $\Z[\G]$ satisfies the Ore condition. This means that for any
two non-zero elements $x,y\in \Z[\G]$ there exist non-zero elements $p,q\in \Z[\G]$ such that $xp=yq$.

Now we denote by $\K(\G)$ the set of equivalence classes of pairs $(p,q)$ where $p\in \Z[\G]$ and $q\in \Z[\G]\sm \{0\}$.
Here we say that two such pairs $(p,q)$ and $(p',q')$ are equivalent if there exist non-zero $x,y\in \Z[\G]$ with $xp=yp'$ and $xq=yq'$.  As usual we denote
such an equivalence class by $pq^{-1}$. Since $\Z[\G]$ is a domain it follows that the canonical map $\Z[\G]\to \K(\G)$ is injective.
By  \cite[Section~4.4]{Pa77}
we can extend the ring structure on $\Z[\G]$ to a ring structure on $\K(\G)$, and with this ring structure, $\K(\G)$ is actually a skew field that contains $\Z[\G]$ as a subring.

\begin{remark}
The Zero-Divisor Conjecture states that for any torsion-free group the group ring $\Z[\G]$ is a domain. If this conjecture holds for all torsion-free amenable groups, then throughout the paper we could work with the class of torsion-free amenable groups instead of torsion-free elementary amenable groups.
\end{remark}

\subsection{Non-commutative Reidemeister torsion of presentations}

Let $X$ be a finite CW-complex with $G=\pi_1(X)$. We denote by $\wti{X}$ the universal cover of $X$.
Let $\varphi\co G\to \G$ be a homomorphism to a group $\G\in \GG$.
The homomorphism $\varphi$ equips  $\Z[\G]$ and $\K(\G)$ with the structure of a right $\Z[G]$-module.
Following the discussion in Section~\ref{section:twihom} we can thus consider
 the chain complexes $C_*^\varphi(X;\Z[\G]):=\Z[\G] \otimes_{\Z[G]} C_*(\wti{X})$
and  $C_*^\varphi(X;\K(\G)):=\K(\G) \otimes_{\Z[G]}C_*(\wti{X})$.
If $C_*^\varphi(X;\K(\G))$ is not acyclic, then we define the corresponding Reidemeister torsion $\tau(X,\varphi)$ to be zero.
Otherwise we pick an ordering of the cells of $X$ and  for  each cell in $X$ we pick a lift to $\wti{X}$. This  turns $C_*^\varphi(X;\K(\G))$ into a chain complex of based $\K(\G)$-left modules. We then define 
\[ \tau(X,\varphi) \in K_1(\K(\G)).\]
to be the Reidemeister torsion of the based chain complex $C_*^\varphi(X;\K(\G))$.
(Here, given a ring $R$ the first $K$-group  $K_1(R)$ is defined as the abelianization of $\underset{\longrightarrow}{\lim} \gl(n,R)$.)
Now we write $\K(\G)^\times =\K(\G)\sm \{0\}$
and we denote by $\K(\G)^\times_{\op{ab}}$ the abelianization of the multiplicative group $\K(\G)^\times$.
 The Dieudonn\'e determinant, see \cite{Ro94}, 
gives rise to an isomorphism $K_1(\K(\G))\to \K(\G)^\times_{\op{ab}}$ which we will use to identify these two groups.
The invariant  $\tau(X,\varphi)\in \K(\G)^\times$ is well-defined up to multiplication by an element of the form $\pm g$ with $g\in \G$. 
Furthermore this invariant  only depends on the homeomorphism type of $X$ and the choice of $\varphi$.
We refer to \cite{Tu01,Fr07,FH07} for details and more precise references.

\begin{example}
Given an oriented $m$-component link $L\subset S^3$ we denote by $X_L=S^3\sm \nu L$ the  exterior of $L$, i.e.\ the complement of an open tubular neighborhood $\nu L$ of $L$. We equip $X_L$ with a CW-structure. We denote by $T$ the multiplicative free abelian group generated by $t_1,\dots,t_m$. 
Furthermore we  denote by $\varphi\co \pi_1(X_L)\to T$ the canonical epimorphism given by sending the $i$-th oriented meridian to $t_i$.
Finally we denote by $\Delta_L(t_1,\dots,t_m)$ the multivariable Alexander polynomial of $L$. 
It follows from \cite{Tu01} that 
\[ \tau(X_L,\varphi)=\left\{ \ba{ll} \frac{\Delta_L(t_1)}{t_1-1},&\mbox{if $L$ has one component}, \\ \Delta_L(t_1,\dots,t_m),&\mbox{if $L$ has more than one component.}\ea \right.\]
Thus the invariant $\tau(X_L,\varphi)$ for admissible homomorphisms to non-abelian groups can  be viewed as a non-commutative generalization of the 
Alexander polynomial of a link. The first such invariants were introduced in \cite{Coc04} for knots, in \cite{Ha05} for general 3--manifolds
and in \cite{LM06,LM08} for plane algebraic curves. 
\end{example}

In the following, given a presentation $\pi$ and a homomorphism $\varphi\co G_\pi\to \G$ to a group in $\GG$ we  write
\[ \tau(\pi,\varphi)=\tau(X_\pi,\varphi)\]
where $X_\pi$ is the 2-complex corresponding to the presentation $\pi$.

\subsection{The polytope group}
Let $V$ be a vector space. 
We denote by $\mathfrak{P}(V)$ the set of all translation-equivalence classes of polytopes in $V$. With the Minkowski sum this becomes an abelian monoid, where the identity element $0$ is given by the polytopes consisting of a single point. 
It is straightforward to show, see e.g.\ \cite[Lemma~3.1.8]{Sc93},  that $\mfp(V)$ has the   cancellation property, i.e.\ for $\PP,\QQ,\RR\in \mfp(V)$ with $\PP+\QQ=\PP+\RR$ we have $\QQ=\RR$. 

We denote by $\mfg(V)$ the set of all equivalence classes of pairs $(\PP,\QQ)\in \mfp(V)^2$ where we say that $(\PP,\QQ)\sim (\PP',\QQ')$ if $\PP+\QQ'=\PP'+\QQ$.
Note that $\mfg(V)$ is an abelian group, and since $\mfp(V)$ has the cancellation property it follows that the map 
\[ \ba{rcl} \mfp(V)&\to &\mfg(V) \\
\PP&\mapsto &(\PP,0)\ea\]
is  a monomorphism. We will use this monomorphism to identify $\mfp(V)$ with its image in $\mfg(V)$. As usual, given $\PP$ and $\QQ\in \mfp(V)$ we write $\PP-\QQ=(\PP,\QQ)$. With our conventions this is consistent with the definition of $\PP-\QQ$ given in Section~\ref{section:minkowski}.

Let $\G$ be a group in $\GG$. We write $V=H_1(\G;\R)$. 
In Section~\ref{section:ore} we saw that $\Z[\G]$ is a domain. It follows  from Lemma~\ref{lem:productadd} that 
\[ \ba{rcl} \PP\co \Z[\G]\sm \{0\} &\to&\mfp(V) \\ f&\mapsto &\PP(f)\ea \]
is a homomorphism of monoids. Since $\mfg(V)$ is commutative this extends to a group homomorphism
\[ \PP\co \K(\G)^\times_{\op{ab}} \to \mfg(V)\]
which we also denote by $\PP$. 

\subsection{The invariant $\TT(\pi)$ for a  $(2,1)$--presentation $\pi$}
\label{section:tg}
An \emph{admissible homomorphism for a group $G$} is 
an epimorphism  $\varphi\co G\to \G$  to a group $\G\in \GG$ such that the 
projection map $G\to H_1(G;\Z)/\mbox{torsion}$ factors through $\varphi$.
Note that $\varphi$ induces  an isomorphism $H_1(G;\R)\cong H_1(\G;\R)$.
Throughout this paper, given an admissible homomorphism $\varphi\co G\to \G$ we will use $\varphi$ to identify $H_1(G;\R)$ with $H_1(\G;\R)$.

If $\pi=\ll x,y|r\rr$ is a nice $(2,1)$--presentation and if $\varphi\co G_\pi\to \G$ is an admissible homomorphism, then it follows in particular that $\varphi(x)$ and $\varphi(y)$ are non-trivial, since they are already non-trivial
in $H_1(\G;\Z)\cong H_1(G_\pi;\Z)\cong \Z^2$.

We will repeatedly make use of the following observation.

\begin{lemma}\label{lem:capadmissible}
If   $\varphi_1$ and $\varphi_2$ are two admissible homomorphisms for $G$, then the projection map
$G\to G/\ker(\varphi_1)\cap \ker(\varphi_2)$ is also admissible.
\end{lemma}

We also need the following lemma.

\begin{lemma}\label{lem:computetau} 
Let $\pi=\ll x,y\,|\,r\rr$ be a nice $(2,1)$--presentation. Let  $\varphi\co G_\pi\to \G$ be an
admissible homomorphism. Then the following are equivalent:
\bn
\item  $\varphi(r_x)\ne 0$,
\item $\varphi(r_y)\ne 0$,
\item $\tau(X,\varphi)\ne 0$.
\en
Furthermore, if any of the three equivalent statement holds, then 
\[ \PP(\tau(X_\pi,\varphi))=\PP(\varphi(r_x))-\PP(\varphi(y-1))=\PP(\varphi(r_y))-\PP(\varphi(x-1))\]
where the equality holds in $\mfg(H_1(\G;\R))=\mfg(H_1(X_\pi;\R))$.
\end{lemma}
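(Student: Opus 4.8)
The chain complex $C_*^\varphi(X_\pi;\K(\G))$ has the shape
\[ 0\to \K(\G)\xrightarrow{\;(\varphi(r_x)\;\;\varphi(r_y))\;}\K(\G)^2\xrightarrow{\;\binom{\varphi(x-1)}{\varphi(y-1)}\;}\K(\G)\to 0,\]
and since $\varphi$ is admissible the elements $\varphi(x-1)$ and $\varphi(y-1)$ are non-zero in the skew field $\K(\G)$, hence invertible. The plan is first to exploit this invertibility to show $\partial_1$ is surjective, so $H_0=0$ and the complex is acyclic if and only if $\partial_2$ is injective with image exactly $\ker\partial_1$. I would argue that $\partial_2$ is injective precisely when $\varphi(r_x)\ne0$ or (equivalently, by the Fox identity~\eqref{equ:foxzero}, which pushes forward along $\varphi$ to give $\varphi(r_x)\varphi(x-1)=-\varphi(r_y)\varphi(y-1)$ in $\K(\G)$) when $\varphi(r_y)\ne0$. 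Indeed, from that identity, $\varphi(r_x)=0\iff\varphi(r_y)=0$ since $\varphi(x-1),\varphi(y-1)$ are invertible; this gives the equivalence of (1) and (2) immediately. If $\varphi(r_y)=0$ then $\varphi(r_x)=0$ too, so $\partial_2=0$ and $H_1=\ker\partial_1\ne0$ (the kernel of a surjection $\K(\G)^2\to\K(\G)$ is a nonzero free rank-one summand), forcing $\tau=0$ by definition; conversely if $\varphi(r_y)\ne0$ then $\partial_2$ is injective (left multiplication by a nonzero element of a skew field), and a direct check that $(-\varphi(y-1)\varphi(x-1)^{-1},1)$ and its $\partial_2$-preimage $\varphi(r_y)^{-1}$ span $\ker\partial_1$ shows the complex is acyclic, so $\tau\ne0$. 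This establishes (1)$\iff$(2)$\iff$(3).

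**Computing the polytope.** Now suppose the equivalent conditions hold. I would compute $\tau(X_\pi,\varphi)$ as an alternating product of determinants of the based acyclic complex. Ordering the cells so that the $2$-cell maps to $\K(\G)$ and choosing a splitting, one gets (up to the usual $\pm g$ ambiguity, which disappears after applying $\PP$) a representative such as $\tau=\varphi(r_y)\cdot\varphi(x-1)^{-1}$ in $\K(\G)^\times$; the symmetric computation using the other coordinate gives $\tau=\varphi(r_x)\cdot\varphi(y-1)^{-1}$. (These agree in $\K(\G)^\times_{\op{ab}}$ by~\eqref{equ:foxzero}.) Applying the polytope homomorphism $\PP\co\K(\G)^\times_{\op{ab}}\to\mfg(H_1(\G;\R))$ constructed in the previous subsection — which is well-defined because $\Z[\G]$ is a domain, so by Lemma~\ref{lem:productadd}(1) every vertex-component is a non-zero-divisor and $\PP$ is a monoid homomorphism — turns the multiplicative formula into the additive formula
\[ \PP(\tau(X_\pi,\varphi))=\PP(\varphi(r_y))-\PP(\varphi(x-1))=\PP(\varphi(r_x))-\PP(\varphi(y-1)),\]
which is the assertion, using the identification $H_1(\G;\R)=H_1(X_\pi;\R)$ coming from admissibility.

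**Main obstacle.** The routine parts are the linear algebra over the skew field and the Fox-identity bookkeeping. The genuine care is needed in pinning down the exact representative of $\tau$ in $\K(\G)^\times$ from the based chain complex: one must choose bases and a chain contraction so that the alternating product of Dieudonné determinants comes out as $\varphi(r_y)\varphi(x-1)^{-1}$ (as opposed to, say, its inverse), tracking the conventions from Section~\ref{section:2complexpi} that vectors are rows and matrices act on the right, together with the fact that the $\pm g$ indeterminacy is harmless only after passing to $\mfg(V)$ via $\PP$. I expect this normalization — ensuring the formula has the stated form rather than its negative in $\mfg(V)$ — to be the one spot requiring attention; everything else follows formally from acyclicity, \eqref{equ:foxzero}, and the fact that $\PP$ is a homomorphism on $\K(\G)^\times_{\op{ab}}$.
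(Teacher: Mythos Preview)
Your proposal is correct and follows the same underlying computation as the paper. The only difference is packaging: the paper's proof simply observes that $\varphi(x-1),\varphi(y-1)$ are invertible in $\K(\G)$ (since $\varphi(x),\varphi(y)$ are non-trivial by admissibility) and then cites \cite[Theorem~2.1]{Fr07} for the formula $\tau(X,\varphi)=\varphi(r_x)\varphi(y-1)^{-1}=\varphi(r_y)\varphi(x-1)^{-1}$, from which everything follows at once; you instead unpack that citation by directly checking acyclicity via the Fox identity~\eqref{equ:foxzero} and sketching the torsion computation for the length-two complex, which is exactly the content of the cited theorem in this special case.
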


\begin{proof}
As remarked above, $\varphi(x)$ and $\varphi(y)$ are non-trivial. This implies
that  $\varphi(x-1)$ and $\varphi(y-1)$
are invertible in $\K(\G)$. Now the  lemma is an immediate consequence of the definitions and Theorem~2.1 of  \cite{Fr07} which says in this context that
\[ \tau(X,\varphi)=\varphi(r_x)\varphi(y-1)^{-1}=\varphi(r_y)\varphi(x-1)^{-1}.\]
\end{proof}

Let $V$ be a vector space.
Given  $(\PP,\QQ)$ and $(\PP',\QQ')$ in $\mfg(V)$ we  write $(\PP,\QQ)\leq (\PP',\QQ')$ if there exists a $v\in V$ such that 
$v+\PP+\QQ'\subset \PP'+\QQ$. Note that this descends to a  partial ordering  on $\mfg(V)$.

Now we have the following lemma, which is a straightforward consequence of the definitions, of Proposition~\ref{prop:foxpolytope} and of Lemma~\ref{lem:computetau}.
We leave the details to the reader.

\begin{lemma}\label{lem:comparetau} 
Let $\pi=\ll x,y\,|\,r\rr$ be a nice $(2,1)$--presentation. Let  $\varphi\co G_\pi\to \G$ be an
admissible homomorphism. Then
\[ \PP(\tau(X_\pi,\varphi))\leq \PP_\pi\]
as polytopes in $H_1(\G;\R)=H_1(G_\pi;\R)$.
If $\psi\co G_\pi\to \G'$ is an admissible homomorphism which factors through $\varphi$,
then
\[ \PP(\tau(X_\pi,\psi))\leq \PP(\tau(X_\pi,\varphi))\]
as polytopes in $H_1(\G;\R)=H_1(\G';\R)=H_1(G_\pi;\R)$.
\end{lemma}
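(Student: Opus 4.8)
The plan is to unravel both inequalities directly from the definitions, using Proposition~\ref{prop:foxpolytope}, Lemma~\ref{lem:computetau} and the functoriality of the polytope homomorphism $\PP\colon\K(\G)^\times_{\op{ab}}\to\mfg(V)$. First I would dispose of the degenerate case: if $\varphi(r_x)=0$ (equivalently, by Lemma~\ref{lem:computetau}, $\varphi(r_y)=0$ or $\tau(X_\pi,\varphi)=0$), then $\PP(\tau(X_\pi,\varphi))$ is not defined as an honest polytope, so one must first agree on a convention — presumably that $\tau=0$ is treated as $-\infty$ in the partial order, making both inequalities vacuous. Assuming then that $\tau(X_\pi,\varphi)\neq 0$, Lemma~\ref{lem:computetau} gives $\PP(\tau(X_\pi,\varphi))=\PP(\varphi(r_y))-\PP(\varphi(x-1))$ in $\mfg(H_1(\G;\R))$.

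For the first inequality, I would compare this with Proposition~\ref{prop:foxpolytope}, which says $\MM(r_y)=\MM_\pi+\MM(x-1)$ in $H_1(G_\pi;\R)$, hence at the level of unmarked polytopes $\PP(r_y)=\PP_\pi+\PP(x-1)$, i.e.\ $\PP_\pi=\PP(r_y)-\PP(x-1)$ in $\mfg(H_1(G_\pi;\R))$. The point is then that applying $\varphi$ can only \emph{collapse} the polytope: since $\varphi$ is the identity on $H_1(-;\R)$ (it is admissible) but may identify group elements, the support of $\varphi(r_y)$ is the image of the support of $r_y$ under the map $G_\pi\to\G$, and $\eps(\varphi(g))=\eps(g)$ under the identification $H_1(\G;\R)=H_1(G_\pi;\R)$. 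Cancellation may occur in $\varphi(r_y)$, so $\PP(\varphi(r_y))\subseteq\PP(r_y)$ up to translation; combined with $\PP(\varphi(x-1))=\PP(x-1)$ (as $\varphi(x)$ is nontrivial and $x\mapsto$ a nonzero class, the binomial $\varphi(x)-1$ has the same two-point support up to translation), one concludes $\PP(\tau(X_\pi,\varphi))\le\PP_\pi$. I would state the ``collapsing'' fact as a small lemma: for $f\in\Z[G]\setminus\{0\}$ and a homomorphism $\varphi\colon G\to\G$ inducing an isomorphism on $H_1(-;\R)$, if $\varphi(f)\neq 0$ then $\PP(\varphi(f))\leq \PP(f)$ in $\mfg(H_1(G;\R))$ — this is immediate because the support of $\varphi(f)$ sits inside the image of the support of $f$, and the inclusion of supports gives an inclusion of convex hulls.

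For the second inequality, suppose $\psi\colon G_\pi\to\G'$ factors through $\varphi$, say $\psi=\rho\circ\varphi$ with $\rho\colon\G\to\G'$. Then $\psi(r_y)=\rho(\varphi(r_y))$, and applying the same collapsing lemma to $\rho$ (which again induces an isomorphism on $H_1(-;\R)$, since both $\varphi$ and $\psi$ are admissible) gives $\PP(\psi(r_y))\leq\PP(\varphi(r_y))$, while $\PP(\psi(x-1))=\PP(\varphi(x-1))=\PP(x-1)$. Subtracting (legitimate in the group $\mfg$, and order-preserving since translation-invariant containment is preserved under Minkowski addition), and invoking Lemma~\ref{lem:computetau} twice to rewrite both sides as torsion polytopes, yields $\PP(\tau(X_\pi,\psi))\leq\PP(\tau(X_\pi,\varphi))$. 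One should also note that $\tau(X_\pi,\psi)\neq 0$ need not follow from $\tau(X_\pi,\varphi)\neq 0$, but if $\tau(X_\pi,\psi)=0$ the inequality holds vacuously under our convention, so there is nothing to check.

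The main obstacle I anticipate is purely bookkeeping rather than conceptual: keeping straight the three identifications of first homology ($H_1(G_\pi;\R)\cong H_1(\G;\R)\cong H_1(\G';\R)$) induced by the admissible homomorphisms, and checking that the ``collapsing lemma'' interacts correctly with the passage from $\PP(f)-\PP(g)$ in $\mfg$ to the partial order $\leq$ — in particular that subtracting a \emph{fixed} polytope $\PP(x-1)$ from both sides of an inequality preserves it, which follows from cancellation in $\mfp(V)$ together with the translation-invariance built into $\leq$. There is also a minor subtlety in that $\PP(\varphi(x-1))$ equals $\PP(x-1)$ only up to translation (the image of $1$ and of $x$ in $H_1$ are a translate of the standard pair), but since everything lives in $\mfg(V)$, translates are identified and this causes no trouble. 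None of this requires the marked structure — the statement is only about unmarked polytopes — so I would not need Lemma~\ref{lem:defmf} or the monomial characterisation here.
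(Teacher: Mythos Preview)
Your proposal is correct and follows exactly the route the paper indicates: the paper states that the lemma ``is a straightforward consequence of the definitions, of Proposition~\ref{prop:foxpolytope} and of Lemma~\ref{lem:computetau}'' and leaves the details to the reader, and your write-up supplies precisely those details --- the collapsing observation $\PP(\varphi(f))\leq\PP(f)$ for admissible $\varphi$, the equality $\PP(\varphi(x-1))=\PP(x-1)$, and the compatibility of subtraction with the partial order in $\mfg(V)$. One very minor remark: your worry that $\PP(\varphi(x-1))$ equals $\PP(x-1)$ only up to translation is unfounded --- under the identification $H_1(\G;\R)=H_1(G_\pi;\R)$ induced by $\varphi$, the identity element maps to $0$ and $\varphi(x)$ maps to $\eps(x)$, so the two segments coincide on the nose --- but as you note this is harmless in $\mfg(V)$ anyway.
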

 
We have the following corollary.

\begin{corollary}\label{cor:existmaxtau}
Let $\pi=\ll x,y\,|\,r\rr$ be a $(2,1)$--presentation. There exists an admissible  $\varphi$ such that for any 
other admissible homomorphism $\psi$ we have 
\[ \PP(\tau(X_\pi,\psi))\leq \PP(\tau(X_\pi,\varphi)).\]
\end{corollary}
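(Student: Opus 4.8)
The plan is to realise the desired $\varphi$ as a sufficiently fine admissible quotient of $G_\pi$, using two facts: as $\varphi$ ranges over admissible homomorphisms there are only finitely many possible polytopes $\PP(\tau(X_\pi,\varphi))$, and any two of them have a common upper bound for $\leq$ which is again realised by an admissible homomorphism. A finite directed poset has a top element, and this top is $\PP(\tau(X_\pi,\varphi))$ for the $\varphi$ we want.

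For the finiteness, I would fix, as in the proof of Corollary~\ref{cor:distinct}, the finitely many prefixes $w_1,\dots,w_s$ of $r$ with $r_y=\sum_{j=1}^s\eta_jw_j\in\Z[F]$. If $\varphi\co G_\pi\to\G$ is admissible then the abelianisation of $G_\pi$ factors through $\varphi$, so the class of $w_j$ in $H_1(\G;\R)=H_1(G_\pi;\R)$ equals $\eps(w_j)$, independently of $\varphi$. Since $\varphi(r_y)=\sum_j\eta_j\varphi(w_j)$, the support of $\varphi(r_y)$ in $\G$ lies in $\{\varphi(w_1),\dots,\varphi(w_s)\}$, so $\PP(\varphi(r_y))$ is the convex hull of a subset of the fixed finite set $S=\{\eps(w_1),\dots,\eps(w_s)\}\subset H_1(G_\pi;\R)$. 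There are finitely many subsets of $S$, hence finitely many polytopes $\PP(\varphi(r_y))$, and since $\PP(\varphi(x-1))$ is always the fixed segment $\XX$, Lemma~\ref{lem:computetau} gives $\PP(\tau(X_\pi,\varphi))=\PP(\varphi(r_y))-\XX$, so there are only finitely many values $\PP(\tau(X_\pi,\varphi))$ in $\mfg(H_1(G_\pi;\R))$. (If $\varphi(r_y)=0$, equivalently $\tau(X_\pi,\varphi)=0$, the value is the bottom element and can be discarded.)

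For the common upper bound, given admissible $\varphi_1\co G_\pi\to\G_1$ and $\varphi_2\co G_\pi\to\G_2$, Lemma~\ref{lem:capadmissible} shows the projection $q\co G_\pi\to G_\pi/(\ker\varphi_1\cap\ker\varphi_2)$ is admissible; since $\ker q\subseteq\ker\varphi_i$, each $\varphi_i$ factors through $q$, so by Lemma~\ref{lem:comparetau} we get $\PP(\tau(X_\pi,\varphi_i))\leq\PP(\tau(X_\pi,q))$ for $i=1,2$. Thus the finite set $\mathcal F=\{\PP(\tau(X_\pi,\varphi))\mid\varphi\text{ admissible}\}$ is directed; iterating pairwise upper bounds and using transitivity of $\leq$ produces an element of $\mathcal F$ dominating all of $\mathcal F$, and any admissible $\varphi$ realising that element satisfies the conclusion of the corollary.

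I expect the only real content to be the finiteness step: once one notices that admissibility pins down the homology classes of the summands of $r_y$, the rest is formal. The remaining points — that the target of $q$ again lies in $\GG$ (it embeds in $\G_1\times\G_2$, and $\GG$ is closed under subgroups and finite products) and the bookkeeping for the degenerate case $\tau(X_\pi,\varphi)=0$ — are routine and are already absorbed into Lemma~\ref{lem:capadmissible} and the conventions in force.
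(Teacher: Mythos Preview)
Your argument is correct and reaches the same conclusion by a somewhat different route than the paper. Both proofs hinge on the directedness coming from Lemmas~\ref{lem:capadmissible} and~\ref{lem:comparetau} (any two admissible homomorphisms admit a common refinement that dominates both under $\leq$), but they establish the required finiteness differently. The paper introduces the lattice-point count $\ell(\PP)=\#(\PP\cap\Z^2)$, picks an admissible $\varphi$ maximising $\ell(\PP(\tau(X_\pi,\varphi)))$ using the a~priori bound $\ell\leq\ell(\PP_\pi)$ from Lemma~\ref{lem:comparetau}, and then checks via a single common refinement that this $\varphi$ is actually $\leq$-maximal, invoking that two $\leq$-comparable integral polytopes with equal lattice-point count must coincide. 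You instead observe directly that $\PP(\varphi(r_y))$ is always the hull of a subset of the fixed finite set $\{\eps(w_1),\dots,\eps(w_s)\}$ (because admissibility pins down the homology classes of the prefixes), so the set $\mathcal F$ of possible values is finite; combined with directedness, the elementary fact that a finite directed set has a greatest element finishes. Your approach is a bit more conceptual and sidesteps the integrality bookkeeping needed to make $\ell$ do its job; the paper's approach is more concrete and makes the integrality of the polytopes explicit. Both treat the degenerate case $\tau(X_\pi,\varphi)=0$ only informally.
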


\begin{proof}
Given a polytope $\PP\subset \R^2$ we denote by $\ell(\PP):= \# \left(\PP\cap \Z^2\right)$ the number of lattice points. In the following we follow the usual convention and we identify $H_1(G_\pi;\Z)$ with $\Z^2$.
The following two statements are an immediate consequence of Lemma~\ref{lem:comparetau} 
\bn
\item If  $\varphi\co G_\pi\to \G$ is an
admissible homomorphism, then
\[ \ell(\PP(\tau(X_\pi,\varphi)))\leq \ell(\PP_\pi).\]
\item If $\psi\co G_\pi\to \G'$ is an admissible homomorphism which factors through $\varphi$,
then
\[ \ell(\PP(\tau(X_\pi,\psi)))\leq \ell(\PP(\tau(X_\pi,\varphi))).\]
\en
We pick an  admissible homomorphism $\varphi$ such that 
$ \ell(\PP(\tau(X_\pi,\varphi)))$ is maximal among all admissible homomorphism.
This definition makes sense since the values for $ \ell(\PP(\tau(X_\pi,\varphi)))$ are bounded by the finite number $\ell(\PP_\pi)$ and since there exists always at least one admissible homomorphism, namely the abelianization homomorphism $G_\pi\to H_1(G_\pi;\Z)\cong \Z^2$. 

We claim that $\varphi$ has the desired property. So suppose that 
$\psi$ is another admissible homomorphism. We want to show that 
\[ \PP(\tau(X_\pi,\psi))\leq \PP(\tau(X_\pi,\varphi)).\]
We consider the  homomorphism $\phi\colon G_\pi\to \ker(\varphi)\cap \ker(\psi)$ which is admissible by  Lemma~\ref{lem:capadmissible}. Since  $\varphi$ factors through $\phi$ it follows from Lemma~\ref{lem:comparetau} that 
\[ \PP(\tau(X_\pi,\varphi))\leq \PP(\tau(X_\pi,\phi)).\]
On the other hand, by the choice of $\varphi$ we have 
\[ \ell(\PP(\tau(X_\pi,\phi)))\leq \ell(\PP(\tau(X_\pi,\varphi))).\]
Since $\PP(\tau(X_\pi,\varphi))$ and $\PP(\tau(X_\pi,\phi))$ both have vertices in $H_1(G_\pi;\Z)=\Z^2$ it follows that 
\[ \PP(\tau(X_\pi,\phi))=\PP(\tau(X_\pi,\varphi)).\]
The desired inclusion now follows from  Lemma~\ref{lem:comparetau} which also says that 
\[ \PP(\tau(X_\pi,\psi))\leq \PP(\tau(X_\pi,\phi)).\]
\end{proof}

Therefore, given a $(2,1)$--presentation $\pi$ it makes sense to define
\[  \TT(\pi)=\max\{ \PP(\tau(\pi,\varphi))\,|\, \varphi\mbox{ admissible homorphism}\} \in \mfp(H_1(G_\pi;\R)).\]

\subsection{Proof of Theorem~\ref{thm:polytope}}
Now we are ready to prove Theorem~\ref{mainthm2}. We start out with the following proposition.

\begin{proposition}\label{prop:invtofg}
Let $\pi$ and $\pi'$ be $(2,1)$--presentations.
If $f\co G_\pi\to G_{\pi'}$ is an isomorphism and if $G_\pi\cong G_{\pi'}$ is torsion-free, then
\[ f_*(\TT(\pi))=\TT(\pi')\in \mfp(H_1(G_{\pi'};\R)).\]
\end{proposition}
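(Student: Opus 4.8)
The plan is to show that $\TT(\pi)$ is defined purely in terms of the abstract group $G_\pi$ and not the presentation, after which functoriality under the isomorphism $f$ is automatic. The key point is that an admissible homomorphism for $G_\pi$ is an epimorphism $\varphi\co G_\pi\to\G$ with $\G\in\GG$ through which the projection $G_\pi\to H_1(G_\pi;\Z)/\text{tors}$ factors; this notion is visibly intrinsic to $G_\pi$. So if $f\co G_\pi\to G_{\pi'}$ is an isomorphism, then $\varphi\mapsto\varphi\circ f^{-1}$ is a bijection between the sets of admissible homomorphisms for $G_\pi$ and for $G_{\pi'}$, and under this bijection $f$ intertwines the induced isomorphisms on $H_1(-;\R)$.

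First I would recall that, by Corollary~\ref{cor:existmaxtau}, the maximum defining $\TT(\pi)=\max\{\PP(\tau(\pi,\varphi))\,|\,\varphi\text{ admissible}\}$ is attained, and that it is attained at \emph{any} admissible homomorphism $\varphi$ which factors far enough down (i.e.\ one maximizing the lattice-point count $\ell(\PP(\tau(X_\pi,\varphi)))$). So it suffices to exhibit, for a well-chosen admissible $\varphi$ realizing the maximum for $\pi$, a corresponding admissible $\psi$ for $\pi'$ with $\PP(\tau(\pi',\psi))=f_*(\PP(\tau(\pi,\varphi)))$, and symmetrically. The natural candidate is $\psi=\varphi\circ f^{-1}$.

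The substantive step is therefore the identity
\[
\PP(\tau(\pi',\varphi\circ f^{-1}))=f_*\big(\PP(\tau(\pi,\varphi))\big)\in\mfg(H_1(G_{\pi'};\R)).
\]
Here the left-hand torsion is computed from the $2$-complex $X_{\pi'}$ and the right-hand one from $X_\pi$; these complexes are generally not homotopy equivalent rel the presentation data in any obvious way, so one cannot simply invoke homeomorphism-invariance of Reidemeister torsion. Instead I would argue as follows. Since $G_\pi\cong G_{\pi'}$ is torsion-free, hence (by \cite{Brj80,Brj84}) locally indicable and in particular with $\Z[\G]$ a domain, Lemma~\ref{lem:computetau} expresses $\tau(X_\pi,\varphi)=\varphi(r_x)\varphi(y-1)^{-1}$ up to the usual $\pm g$ ambiguity, and likewise for $\pi'$. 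Both $X_\pi$ and $X_{\pi'}$ are finite $2$-complexes with $\pi_1=G$ and with Euler characteristic $0$; they have the same chain-homotopy type over $\Z[G]$ after stabilization (any two finite presentations of a group give simple-homotopy equivalent $2$-complexes only up to Andrews--Curtis moves, but here one only needs that the Whitehead-torsion-corrected Reidemeister torsions agree). The cleanest route is: $\tau(X_\pi,\varphi)$ and $\tau(X_{\pi'},\varphi\circ f^{-1})$ both equal, up to multiplication by $\pm\G$, the Reidemeister torsion of the universal cover chain complex of \emph{any} aspherical $K(G,1)$ with the corresponding coefficient system — and one-relator torsion-free groups with $b_1=2$ have $2$-dimensional $K(G,1)$'s (the presentation complexes are aspherical by Lyndon), so $X_\pi\simeq X_{\pi'}$ are both models for $K(G,1)$. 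Thus their torsions agree after transporting by $f$, which gives the displayed identity, and hence $f_*(\TT(\pi))=\TT(\pi')$ once we know (as above) that the max is realized by admissible homomorphisms corresponding under $f$.

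The main obstacle I anticipate is precisely the comparison of the two torsions: making rigorous that $\tau$ depends only on $(G,\varphi)$ and not on the chosen aspherical $2$-complex, tracking the $\pm g$ / $K_1$-indeterminacy and the Whitehead group $\Wh$ (which must be handled or shown to vanish/be irrelevant), and confirming asphericity of the presentation complexes. Everything else — the bijection of admissible homomorphisms, attainment of the maximum, and the passage from the torsion polytope to $\TT$ via Lemma~\ref{lem:comparetau} and Corollary~\ref{cor:existmaxtau} — is formal.
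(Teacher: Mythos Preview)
Your approach is essentially the paper's: asphericity of the presentation complexes for torsion-free one-relator groups (via \cite[Proposition~II.1]{LS77}, as you note) makes $X_\pi$ and $X_{\pi'}$ both $K(G,1)$'s, hence homotopy equivalent, and then one needs to compare their Reidemeister torsions. The obstacle you correctly flag --- the Whitehead-group indeterminacy --- is exactly what the paper resolves, by invoking Waldhausen's theorem \cite[pp.~249--250]{Wa78} that $\Wh(G)=0$ for every torsion-free one-relator group $G$; this upgrades the homotopy equivalence to a \emph{simple} homotopy equivalence, whence $f_*(\tau(X_\pi,\varphi\circ f))=\tau(X_{\pi'},\varphi)$ for every admissible $\varphi$, and the result follows. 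Two minor remarks: the hypothesis $b_1=2$ is not needed for asphericity (torsion-freeness alone forces the relator not to be a proper power), and the detours through Lemma~\ref{lem:computetau}, stabilization, and Andrews--Curtis are unnecessary once you have simple-homotopy equivalence.
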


\begin{proof}
By \cite[Proposition~11.1]{LS77} the 2-complexes $X_\pi$ and $X_{\pi'}$  corresponding to the  $(2,1)$--presentations $\pi$ and $\pi'$ are aspherical.
It follows that $f$  is induced by a homotopy equivalence $f\co X_\pi\to X_{\pi'}$.
Since $\pi$ and $\pi'$ are presentations of torsion-free one-relator groups 
it follows from work of Waldhausen \cite[p.~249~and~p.~250]{Wa78} that the Whitehead group of $G_\pi\cong G_{\pi'}$ is trivial, which implies 
that $f$   induces in fact a simple homotopy equivalence $f\co X_\pi\to X_{\pi'}$. 

For any admissible homomorphism $\varphi\co G_{\pi'}\to \G$ the homomorphism $\varphi\circ f_*$ is an admissible homomorphism for $G_\pi$. Evidently all admissible homomorphism for $G_\pi$ are of that form. Since $f$ is a simple homotopy we  have
\[ f_*(\tau(X_{\pi},\varphi \circ f))=\tau(X_{\pi'},\varphi).\]
Now the proposition is an immediate consequence of these observations and the definitions.
\end{proof}

We also have following proposition.

\begin{proposition}\label{prop:dpphiequalsx}
Let   $\pi=\ll x,y\,|\, r\rr$ be  a nice $(2,1)$--presentation. 
If $G_\pi$ is residually $\GG$, then
\[ \TT(\pi)=\PP_\pi\in \mfp(H_1(G_\pi;\R)).\]
\end{proposition}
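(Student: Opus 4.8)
The plan is to prove the two inequalities $\PP(\tau(X_\pi,\varphi))\le \PP_\pi$ and (for a suitable $\varphi$) $\PP_\pi \le \PP(\tau(X_\pi,\varphi))$, the first of which is already recorded in Lemma~\ref{lem:comparetau}. So the real content is producing an admissible homomorphism $\varphi$ which realizes the full polytope $\PP_\pi$, i.e.\ for which $\PP(\varphi(r_y))=\PP(r_y)$ as polytopes (equivalently, $\varphi$ does not cause any cancellation among the summands of $r_y$ that would shrink the convex hull).

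First I would recall from Corollary~\ref{cor:distinct} that, since $r$ is cyclically reduced, the summands $w_1,\dots,w_s$ appearing in $r_y=\sum \eta_j w_j$ represent pairwise distinct elements of $G_\pi$; indeed each pair $w_jw_{j'}^{-1}$ is (conjugate to) a proper nonempty subword of $r$, hence nontrivial in $G_\pi$ by Weinbaum's theorem (Theorem~\ref{thm:weinbaum}). The key step is then: for each pair $j\ne j'$ the element $w_jw_{j'}^{-1}$ is a fixed nontrivial element of $G_\pi$, so by residual $\GG$-ness there is a homomorphism $\alpha_{j,j'}\co G_\pi\to \G_{j,j'}$ with $\G_{j,j'}\in\GG$ killing none of them. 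Taking the product of all these finitely many homomorphisms together with the abelianization $G_\pi\to H_1(G_\pi;\Z)/\text{torsion}$, and then applying Lemma~\ref{lem:capadmissible} to the intersection of the kernels, produces a single admissible homomorphism $\varphi\co G_\pi\to\G$ under which $w_1,\dots,w_s$ still have pairwise distinct images. Since $\G\in\GG$, the group ring $\Z[\G]$ is a domain (Section~\ref{section:ore}), so no further cancellation or zero-divisor phenomena occur, and hence $\PP(\varphi(r_y))=\PP(r_y)$.

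Next, by Lemma~\ref{lem:computetau} we have $\PP(\tau(X_\pi,\varphi))=\PP(\varphi(r_y))-\PP(\varphi(x-1))=\PP(r_y)-\XX$, and by Proposition~\ref{prop:foxpolytope} $\PP(r_y)=\PP_\pi+\XX$. By the cancellation property of $\mfp(H_1(G_\pi;\R))$ this gives $\PP(\tau(X_\pi,\varphi))=\PP_\pi$. Combined with the inequality $\PP(\tau(X_\pi,\psi))\le\PP_\pi$ for all admissible $\psi$ from Lemma~\ref{lem:comparetau}, and the definition $\TT(\pi)=\max_\psi \PP(\tau(\pi,\psi))$ together with Corollary~\ref{cor:existmaxtau}, we conclude $\TT(\pi)=\PP_\pi$.

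The main obstacle is the cancellation/domain issue: even though the summands of $r_y$ are distinct in $G_\pi$, after applying $\varphi$ two of the $w_j$ could become equal and their coefficients $\eta_j=\pm1$ could cancel, killing a vertex of the polytope. The way around this is exactly the finite-intersection argument above, which crucially uses that the number of summands $s$ is finite (so only finitely many differences $w_jw_{j'}^{-1}$ need to be detected) and that $\GG$ is closed under finite direct products and subgroups, so that the intersection of finitely many kernels still gives a quotient lying in $\GG$. One should also note that this $\varphi$ is honestly admissible, since we included the abelianization among the maps whose kernels we intersect, so the projection $G_\pi\to H_1(G_\pi;\Z)/\text{torsion}$ factors through $\varphi$.
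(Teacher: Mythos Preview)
Your proof is correct and follows essentially the same route as the paper's: use Corollary~\ref{cor:distinct} to see that the summands of the Fox derivative are pairwise distinct in $G_\pi$, use residual~$\GG$ together with the finite-intersection/product argument (Lemma~\ref{lem:capadmissible}) to find a single admissible $\varphi$ separating them, deduce $\PP(\varphi(r_y))=\PP(r_y)$, and then combine Lemma~\ref{lem:computetau}, Proposition~\ref{prop:foxpolytope} and Lemma~\ref{lem:comparetau}/Corollary~\ref{cor:existmaxtau} to conclude. The only cosmetic difference is that the paper works with $r_x$ and $y-1$ while you work with $r_y$ and $x-1$, which is of course symmetric.
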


\begin{proof}
Let   $\pi=\ll x,y\,|\, r\rr$ be  a nice $(2,1)$--presentation.
Denote by $\psi\co G_\pi\to H_1(G_\pi;\Z)/\mbox{torsion}$ the canonical projection map,
and assume $G_\pi$ is residually $\GG$. Recall that given any non-trivial $g\in G_\pi$
there exists a  homomorphism $\varphi\co G_\pi\to \G$ to a group in $\GG$ such that 
$\varphi(g)$ is non-trivial. Note that
\[  G_\pi\to G_\pi\,\,/\,\,(\ker(\varphi)\cap \ker(\psi))\]
is an admissible homomorphism to a group in $\GG$ such that the image of $g$ is non-trivial.

It follows from  Lemma~\ref{lem:capadmissible} that given any finite collection of elements $\{g_i\} \subset G_\pi,$ there exists
an admissible homomorphism $\varphi\co G_\pi\to \G$  such that  the images $\varphi(g_i)$ are pairwise distinct. We apply this to the set of non-trivial elements appearing in $r_x,$ and 
as before we identify $H_1(\G;\R)$ with $H_1(\pi;\R)$. We write $V=H_1(\G;\R)=H_1(G_\pi;\R)$.

Since the $\varphi(g_i)$ are pairwise distinct it follows immediately from the definitions that 
\[ \PP(\varphi(r_x))=\PP(r_x)\subset V.\]
Also, note that  $y$ and $\varphi(y)$ represent the same  non-trivial element in $V$. It thus follows that
\[ \PP(\varphi(y-1))=\PP(y-1).\]
Combining these two equalities with Proposition~\ref{prop:foxpolytope} we obtain that 
\[ \PP(\tau(X_\pi,\varphi))=\PP_\pi.\]
If we combine this equality with Lemma~\ref{lem:comparetau} and Corollary~\ref{cor:existmaxtau}
 we see that 
\[ \PP_\pi=\PP(\tau(X_\pi,\varphi)) \subset \TT(\pi)\subset \PP_\pi.\]
It thus follows that $ \TT(\pi)= \PP_\pi$.
\end{proof}

\begin{proof}[Proof of Theorem~\ref{thm:polytope}]
Let  $G$ be a group which admits a nice $(2,1)$--presentation 
and which has the property that $G$ is residually $\GG$. Our assumption implies in particular that the group  $G$ is residually a torsion-free group, which in turn implies that $G$ itself is torsion-free.

 Let $\pi'$  be another nice $(2,1)$--presentation for $G$. We write $V=H_1(G;\R)$.
It follows from Propositions \ref{prop:invtofg}  and \ref{prop:dpphiequalsx}
that $\PP_\pi=\PP_{\pi'}\in \mfg(V)$. 
Since the Bieri--Neumann--Strebel invariant is an invariant of the group $G$ it  follows from Theorem~\ref{mainthm} that $\PP_\pi$ and $\PP_{\pi'}$ 
have the same marked vertices, i.e.\ we have $\MM_\pi=\MM_{\pi'}$.
\end{proof}

\section{Proof of Theorem~\ref{mainthm3}}
\label{sec:proof of third theorem}

\subsection{Thickness} 

We recall that  given a polytope $\PP$ in a vector space $V$ and a homomorphism $\phi\co V\to \R$ we define the \emph{thickness of $\PP$ with respect to $\phi$}
as 
\[ \th(\PP,\phi)=\max\{ \phi(p)-\phi(q)\,|\,p,q\in \PP\}.\]
Furthermore, we refer to
\[ \PP^{\sym}:=\{ \tfrac{1}{2}(p-q)\,|\, p,q\in \PP\}\]
as the \emph{symmetrization of $\PP$}. 

For future reference we record the following lemma. 
We will only use the first part which is a straightforward consequence of the definitions. We include the second part to facilitate a discussion later on.
We leave the elementary proof to the reader.

\begin{lemma}\label{lem:addth}
Let $\PP$ and $\QQ$ be polytopes in a vector space $V$. Then the following hold:
\bn
\item
If  $\phi\co V\to \R$ is a homomorphism, then we have
\[ \th(\PP+\QQ,\phi)=\th(\PP,\phi)+\th(\QQ,\phi).\]
\item We have 
\[ \th(\PP,\phi)=\th(\QQ,\phi)\mbox{ for all }\phi\in \hom(V,\R)\]
if and only if $\PP^{\sym}=\QQ^{\sym}$.
\en
\end{lemma}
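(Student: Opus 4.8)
The plan is to route both parts through the \emph{support function}: for a non-empty compact set $K\subset V$ and $\phi\in\hom(V,\R)$ write $h_K(\phi):=\max\{\phi(s)\,|\,s\in K\}$, the maximum existing by compactness. Everything then reduces to unwinding definitions plus one classical fact about support functions.

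For part (1), I would simply expand using linearity of $\phi$. Writing a point of $\PP+\QQ$ as $p+q$ with $p\in\PP$ and $q\in\QQ$, we get
\[ \th(\PP+\QQ,\phi)=\max\{(\phi(p)-\phi(p'))+(\phi(q)-\phi(q'))\,|\,p,p'\in\PP,\ q,q'\in\QQ\}.\]
Because the pair $(p,p')$ ranges over $\PP\times\PP$ independently of the pair $(q,q')\in\QQ\times\QQ$, the maximum of the sum is the sum of the maxima, which is exactly $\th(\PP,\phi)+\th(\QQ,\phi)$. No convexity is needed for this half.

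For part (2), the key identity is that thickness is twice the support function of the symmetrization: by linearity,
\[ \th(\PP,\phi)=\max\{\phi(p)-\phi(q)\,|\,p,q\in\PP\}=2\max\{\phi(\tfrac{1}{2}(p-q))\,|\,p,q\in\PP\}=2\,h_{\PP^{\sym}}(\phi),\]
and likewise $\th(\QQ,\phi)=2\,h_{\QQ^{\sym}}(\phi)$. Hence $\th(\PP,\cdot)$ depends only on $\PP^{\sym}$, and $\PP^{\sym}=\QQ^{\sym}$ immediately forces $\th(\PP,\phi)=\th(\QQ,\phi)$ for all $\phi$. For the converse I would first observe that $\PP^{\sym}=\tfrac{1}{2}\bigl(\PP+(-\PP)\bigr)$ is a Minkowski sum of polytopes, hence itself a polytope (as recalled in Section~\ref{section:minkowski}), in particular non-empty, compact and convex; the same holds for $\QQ^{\sym}$. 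Assuming $h_{\PP^{\sym}}=h_{\QQ^{\sym}}$, if there were a point $s_0$ in, say, $\PP^{\sym}\setminus\QQ^{\sym}$, then the separating hyperplane theorem applied to $\{s_0\}$ and $\QQ^{\sym}$ would produce $\phi\in\hom(V,\R)$ with $\phi(s_0)>h_{\QQ^{\sym}}(\phi)$, whence $h_{\PP^{\sym}}(\phi)\geq\phi(s_0)>h_{\QQ^{\sym}}(\phi)=h_{\PP^{\sym}}(\phi)$, a contradiction; the same argument with the roles of $\PP$ and $\QQ$ reversed rules out points of $\QQ^{\sym}\setminus\PP^{\sym}$, so $\PP^{\sym}=\QQ^{\sym}$.

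The only step beyond bookkeeping is the last one — that a compact convex set is determined by its support function — but this is the standard Hahn--Banach / separating-hyperplane argument and, in our finite-dimensional polytopal setting, completely elementary. So I do not anticipate any genuine obstacle: the lemma is essentially an unwinding of the definition of $\th$ together with this one classical fact, which is exactly why the authors leave it to the reader.
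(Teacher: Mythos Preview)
Your proposal is correct. The paper does not give a proof of this lemma at all---it states ``We leave the elementary proof to the reader''---so there is no approach to compare against; your support-function argument is exactly the kind of elementary verification the authors have in mind.
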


\subsection{Splittings of groups}\label{section:splittings} 
Let $G$ be a finitely presented group and let $\phi\co G\to \Z$ be an epimorphism. Let $B$ be a finitely generated group. A 
\emph{splitting of $(G,\phi)$ over  $B$} is an isomorphism
\[ f\co G\xrightarrow{\cong} \ll A,t\,|\, \mu(B)=tBt^{-1}\rr \]
such that the following hold:
\bn
\item $A$ is finitely generated,
\item  $B$ is a subgroup of $A$ and $\mu\co B\to A$ is a monomorphism,
\item  $(\phi \circ f^{-1})(x)=0$ for $a\in A$  and $(\phi\circ f^{-1})(t)=1$.
\en
It is well-known, see e.g.\ \cite{BS78} or \cite[Theorem~B*]{Str84}, that any such pair
$(G,\phi)$ admits a splitting  over a finitely generated group.
We define the \emph{splitting complexity of $(G,\phi)$} as 
\[ c(G,\phi)=\min\{ \rank(B)\,|\, (G,\phi) \mbox{ splits over }B\},\]
where $\rank(B)$ is defined as the minimal number of generators of $B$.

In the following we will also consider the \emph{free splitting complexity $c_f(G,\phi)$}. If $(G,\phi)$ does not split over a free group, then we define $c_f(G,\phi)=\infty$, otherwise we define the free complexity to be
\[ c_f(G,\phi)=\min\{ \rank(F)\,|\, (G,\phi) \mbox{ splits over a free group } F\}. \]
By definition we have $c(G,\phi)\leq c_f(G,\phi)$. 

\begin{example}
Let $K$ be a knot in $S^3$. It follows easily from the definitions and the  Seifert-van Kampen theorem that 
\[ c(G,\phi)\leq c_f(G,\phi)\leq \mbox{$2\cdot \mbox{genus}(K)$}\]
where $\genus(K)$ denotes the minimal genus of a Seifert surface.
In \cite{FSW13} it was shown that the above inequalities are in fact  equalities.
\end{example}

The following theorem is  a slightly stronger version of Theorem~\ref{mainthm3}.

\begin{theorem}\label{thm:splittings}
Let  $G$ be a group which admits a nice $(2,1)$--presentation.
If $G$ is residually $\GG$, then for any epimorphism $\phi\co G\to \Z$ we have
\[ c(G,\phi)-1=c_f(G,\phi)-1=\th(\PP_\pi,\phi).\]
\end{theorem}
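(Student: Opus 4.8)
The plan is to prove the two inequalities $c_f(G,\phi) - 1 \le \th(\PP_\pi,\phi)$ and $\th(\PP_\pi,\phi) \le c(G,\phi) - 1$, and then to observe that, together with the trivial inequality $c(G,\phi) \le c_f(G,\phi)$, these sandwich all three quantities together and force the equalities. The first inequality is an \emph{upper bound on the free splitting complexity} and should be proved by explicitly constructing a splitting of $(G,\phi)$ over a free group of small rank. The second is a \emph{lower bound on the splitting complexity of any splitting}, and this is where the hypothesis ``residually $\GG$'' together with the noncommutative Reidemeister torsion $\tau(\pi,\varphi)$ of Section~\ref{section:tg} must be brought to bear.

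For the upper bound, I would start from a nice $(2,1)$--presentation $\pi = \ll x,y \mid r\rr$ of $G$. After a cyclic permutation (Lemma~\ref{lem:polytopetranslate}, which only translates $\MM_\pi$ and hence does not change $\th(\MM_\pi,\phi)$) and possibly swapping the roles of $x$ and $y$, I would arrange that $\phi(x) \ne 0$; rescaling $\phi$ is harmless since $\th$ scales linearly and $c_f$ is unchanged. Following the HNN picture implicit in the chain complex of Section~\ref{section:2complexpi}, one can realize $(G,\phi)$ as an HNN-extension whose edge group is free of rank equal to the ``width'' of the Fox derivative $r_x$ in the $\phi$-direction, i.e.\ one more than $\th(\MM(r_x),\phi)$ minus the contribution of the $x-1$ factor. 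Concretely, using $\MM(r_x) = \MM_\pi + \MM(y-1)$ (Proposition~\ref{prop:foxpolytope}) together with Lemma~\ref{lem:addth}(1), we get $\th(\MM(r_x),\phi) = \th(\MM_\pi,\phi) + \th(\YY,\phi)$; reorganizing the relator $r$ according to the $\phi$-value of the prefixes $r_i$ gives a presentation of $\ker(\phi)$ as a sequence of stable letters and a single defining relation, from which one extracts a splitting over a free group of rank $\th(\MM_\pi,\phi)+1$. I would phrase this carefully as: cut the walk tracing out $\NN$ (Lemma~\ref{lem:defmpi}) along the level sets of $\phi$, and read off the Mayer--Vietoris / Bass--Serre data.

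For the lower bound, suppose $(G,\phi)$ splits over a finitely generated group $B$ of rank $n = c(G,\phi)$, i.e.\ $G = \ll A,t \mid \mu(B) = tBt^{-1}\rr$ with $\phi(t)=1$, $\phi|_A = 0$. Since $G$ is residually $\GG$ and torsion-free, choose an admissible homomorphism $\varphi \co G \to \G$ with $\G \in \GG$ realizing the maximal torsion polytope, so that $\PP(\tau(\pi,\varphi)) = \PP_\pi$ by Proposition~\ref{prop:dpphiequalsx}. The point is that $\tau(\pi,\varphi)$ can be computed from the HNN splitting as $\det(\id - t\,\varphi(\mu_*) )$ or a similar ``characteristic polynomial'' of the edge-to-vertex maps over the Ore skew field $\K(\ker\phi \to \cdots)$; its Newton polytope in the $t$-direction — which is $\th(\PP_\pi,\phi)$ — is bounded above by the number of ``rows'' involved, namely $\rank(B) = n$. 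More precisely, the edge group $B$ contributes at most $\rank(B)$ to the $t$-degree span of a matrix of Fox-type derivatives presenting the splitting, so $\th(\PP(\tau(\pi,\varphi)),\phi) \le \rank(B) - 1 = c(G,\phi) - 1$. Combined with $\PP(\tau(\pi,\varphi)) = \PP_\pi$, this is exactly $\th(\PP_\pi,\phi) \le c(G,\phi)-1$.

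The main obstacle is the lower bound, specifically making rigorous the claim that ``the $t$-degree span of the Reidemeister torsion of an HNN-splitting is at most the rank of the edge group.'' This requires: (i) choosing the admissible homomorphism $\varphi$ so that the vertex group ring embeds into a skew field via the Ore localization of Section~\ref{section:ore}, which is fine since everything is residually $\GG$ and subgroups of $\GG$-groups are in $\GG$; (ii) presenting the relative chain complex of the splitting over this skew field and identifying $\tau(\pi,\varphi)$ with a Dieudonn\'e determinant of an $n \times n$ matrix over the twisted Laurent extension, whose entries have bounded $t$-degree; and (iii) controlling the degree of the Dieudonn\'e determinant — here one uses that for a matrix over $\K(\ker\phi\text{-localization})[t^{\pm1}]$ the polytope (degree span) of the determinant is at most the sum over rows of the polytopes of the entries, which is the multiplicativity encoded in the homomorphism $\PP \co \K(\G)^\times_{\op{ab}} \to \mfg(V)$ of Section~\ref{section:tg}. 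I would expect steps (ii)--(iii) to occupy the bulk of the section, and would cite \cite{Fr07, FH07} and the Seifert-surface analogue \cite{FSW13} for the technical backbone, since the argument there for knots (where the edge group is $\pi_1$ of a Seifert surface and $2\genus$ plays the role of $\rank(B)$) transfers essentially verbatim to the present group-theoretic setting.
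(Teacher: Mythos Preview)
Your overall architecture --- the sandwich $c(G,\phi) \le c_f(G,\phi) \le \th(\PP_\pi,\phi)+1 \le c(G,\phi)$, with the upper bound via an explicit free splitting and the lower bound via noncommutative Reidemeister torsion combined with Proposition~\ref{prop:dpphiequalsx} --- is exactly the paper's: the theorem is stated to follow immediately from Propositions~\ref{prop:splittings1} and~\ref{prop:splittings2}.

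For the upper bound the paper is more concrete than your sketch. It first normalizes (Lemma~\ref{lem:splittings1}) so that $\phi(a)=0$, $\phi(t)=1$ --- not merely $\phi(x)\ne 0$ as you suggest --- then writes $r=t^{m_1}a^{n_1}\cdots t^{m_k}a^{n_k}$ and uses Tietze moves together with the Freiheitssatz to exhibit a splitting over a free group of rank $D-d$, where $D,d$ are the maximum and minimum of the partial sums $M_i=m_1+\dots+m_i$. Your ``cut the walk along level sets of $\phi$'' picture is morally the same construction but would need the Freiheitssatz made explicit to conclude that the edge group is actually free.

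For the lower bound your execution diverges from the paper's, and your step~(ii) has a genuine gap. You propose to compute $\tau(\pi,\varphi)$ directly from the HNN splitting as the Dieudonn\'e determinant of an $n\times n$ matrix. But the HNN presentation $\ll g_1,\dots,g_k,t \mid r_1,r_2,\dots, \mu(x_j)^{-1}tx_jt^{-1}\rr$ is in general infinite (the vertex group $A$ need only be finitely generated) and highly non-square, so there is no evident $n\times n$ matrix whose determinant is $\tau$; and even if there were, identifying it with $\tau(X_\pi,\varphi)$ would require simple-homotopy equivalence of this possibly infinite complex with $X_\pi$, which Waldhausen's result does not directly provide. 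The paper sidesteps all of this by never computing $\tau$ from the splitting. Instead it shows (Lemma~\ref{lem:euler} applied to $X_\pi$) that
\[ \th_\phi(\PP(\tau(X_\pi,\varphi))) = \dim_\phi H_1(X_\pi;\Z[\G]) - \dim_\phi H_0(X_\pi;\Z[\G]), \]
observes that these $\dim_\phi$ depend only on $\pi_1$ and hence agree with the corresponding dimensions for the HNN complex $Y$, and then bounds those via Lemmas~\ref{lem:upperbound-degree} and~\ref{lem:upperbound-degree-2}, using only that the Fox-derivative matrix for $Y$ has the shape $P+tQ$ with at most $c$ nonzero rows in $Q$. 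The passage through $\dim_\phi$ over $\K(\G')$ is precisely what makes the argument insensitive to the infinite presentation; your determinant approach would need an equivalent device, and as written it does not supply one.
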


The theorem is an immediate consequence of 
Propositions \ref{prop:splittings1} and \ref{prop:splittings2} which we will prove in the next two sections.

\subsection{Upper bounds on the complexity of splittings}
In this section we give an upper bound on $c_f(G,\phi)$.
This result does not require any extra assumptions on $G$.
We are very grateful to Nathan Dunfield for telling us about this proposition.

\begin{proposition}\label{prop:splittings1}
Let  $G$ be a group which admits a nice $(2,1)$--presentation. Then for any epimorphism $\phi\co G\to \Z$ we have
\[ c_f(G,\phi)-1\leq \th(\PP_\pi,\phi).\]
\end{proposition}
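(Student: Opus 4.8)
The plan is to produce, directly from the presentation $\pi=\ll x,y\,|\,r\rr$ and the epimorphism $\phi$, an explicit splitting of $(G,\phi)$ over a free group whose rank is controlled by $\th(\PP_\pi,\phi)$. First I would reduce to a convenient normal form: after applying an automorphism of $H_1(G;\Z)=\Z^2$ (which by the Euclidean algorithm can be realised by a sequence of Nielsen transformations on the free group $\ll x,y\rr$, hence changes neither $G_\pi$ nor, up to the corresponding linear map, the polytope $\PP_\pi$) I may assume $\phi(x)=1$ and $\phi(y)=0$. Thus $\ker(\phi)$ is normally generated by $y$ in $G$, and $\th(\PP_\pi,\phi)$ becomes simply the width $x_1(\PP_\pi)-x_0(\PP_\pi)$ of the polytope in the $x$-direction. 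By Lemma~\ref{lem:polytopetranslate} I may cyclically permute $r$ without changing $\PP_\pi$ up to translation, so I can also normalise where the walk defining $\NN$ starts.

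Next I would build the HNN-splitting combinatorially. Write $r$ as a word in $x^{\pm1}$ and $y^{\pm1}$ and track the $x$-exponent sum of the prefixes $r_i$; let $a=\min_i \eps_x(r_i)$ and $b=\max_i\eps_x(r_i)$ be the minimal and maximal $x$-levels visited by the walk. A standard rewriting (the Reidemeister--Schreier / "pushing the relator into a fundamental domain" argument, as in the proof that one-relator groups with $\ker\phi$ infinitely generated still split — cf. \cite{BS78}, \cite{Str84}) presents $G$ as an HNN-extension $\ll A,x\,|\,\mu(B)=xBx^{-1}\rr$ where $A$ is generated by the "translates" $y_j := x^{j}yx^{-j}$ for $a\le j\le b$ together with one copy of the relator placed across these levels, $B$ is the subgroup of $A$ generated by $y_a,\dots,y_{b-1}$, and $\mu$ sends $y_j\mapsto y_{j+1}$. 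One checks that $A$ is a one-relator group, that $B$ is in fact \emph{free} on $y_a,\dots,y_{b-1}$ — here one invokes Theorem~\ref{thm:weinbaum} (Weinbaum): the generators $y_j$ together freely generate a subgroup because any non-trivial reduced word among them, pushed back into $\ll x,y\rr$, is a word not equal to the identity, as it cannot contain the cyclic word $r$ as a subword given the level constraints — and that the edge maps are the obvious shifts. This gives a splitting over a free group of rank $b-a$.

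Finally I would identify $b-a$ with $\th(\PP_\pi,\phi)+1$, equivalently with the $x$-width of $\NN$, which is $x_1(\NN)-x_0(\NN)$, minus nothing: by Lemma~\ref{lem:defmpi} we have $\NN=\MM_\pi+\XX+\YY$, so $\th(\NN,\phi)=\th(\MM_\pi,\phi)+\th(\XX,\phi)+\th(\YY,\phi)=\th(\PP_\pi,\phi)+1+0$ using Lemma~\ref{lem:addth}(1) and $\phi(x)=1$, $\phi(y)=0$; and the walk visits exactly the $x$-levels $a,\dots,b$, so $b-a=\th(\NN,\phi)=\th(\PP_\pi,\phi)+1$. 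Hence $c_f(G,\phi)\le \rank(B)=b-a=\th(\PP_\pi,\phi)+1$, which is the claim.

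The main obstacle is the middle step: carefully extracting the HNN-presentation with free edge group from $\ll x,y\,|\,r\rr$ and verifying that the Schreier generators $y_j$ across the visited $x$-levels really do generate a \emph{free} subgroup of the correct rank — this is where the one-relator structure and Weinbaum's theorem do the real work, and where one must be precise about which prefixes of $r$ contribute. Everything else (the change of basis, the bookkeeping relating $b-a$ to thickness via Lemmas~\ref{lem:defmpi} and~\ref{lem:addth}) is routine.
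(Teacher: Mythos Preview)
Your overall architecture is the same as the paper's: reduce via Nielsen moves to a presentation in which one generator is sent to $1$ and the other to $0$ by $\phi$ (this is the content of Lemma~\ref{lem:splittings1}), rewrite $r$ in terms of the conjugates $y_j=x^jyx^{-j}$ to obtain an explicit HNN-extension with one-relator vertex group and free edge group of rank equal to the span of $\phi$-levels minus one (this is Lemma~\ref{lem:splittings2}), and then identify that span with $\th(\PP_\pi,\phi)+1$. Your bookkeeping for the last step, via $\NN=\MM_\pi+\XX+\YY$ and Lemma~\ref{lem:addth}(1), is if anything tidier than the paper's Fox-derivative computation.

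The genuine gap is the justification that $B=\langle y_a,\dots,y_{b-1}\rangle$ is free. Weinbaum's theorem (Theorem~\ref{thm:weinbaum}) only tells you that proper non-empty subwords of $r$ are non-trivial; it says nothing about freeness of a given set of elements, and your sentence ``cannot contain the cyclic word $r$ as a subword given the level constraints'' is not a valid criterion for non-triviality in a one-relator group. The correct tool here is the Magnus \emph{Freiheitssatz}: in the one-relator group $A=\langle y_a,\dots,y_b\mid \tilde r\rangle$ the rewritten relator $\tilde r$ genuinely involves both $y_a$ and $y_b$ (by definition of $a$ and $b$), so the subgroup generated by $y_a,\dots,y_{b-1}$ is free on those generators, and likewise $y_{a+1},\dots,y_b$ generate a free subgroup---the latter is what makes the shift $\mu$ a monomorphism, a point you should also state. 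This is exactly what the paper invokes. Once you replace the appeal to Weinbaum by the Freiheitssatz, your proof is correct and coincides with the paper's.
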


We first prove the following lemma.

\begin{lemma}\label{lem:splittings1}
Let  $G$ be a group which admits a nice $(2,1)$--presentation. Let  $\phi\co G\to \Z$ be an epimorphism.
Then there exists a 
nice $(2,1)$--presentation $\ll x,y\,|\,r\rr$ for $G$ with $\phi(x)=0$, $\phi(y)=1$, and that gives rise to the same polytope as $\pi$.
\end{lemma}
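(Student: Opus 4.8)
The plan is to realise the required change of generators by a sequence of Nielsen transformations and to control their effect on the marked polytope through the Fox-calculus description of $\MM_\pi$ given in Proposition~\ref{prop:foxpolytope}.

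Since $b_1(G_\pi)=2$ the relator $r$ is homologically trivial and $H_1(G_\pi;\Z)=\Z^2$ is freely generated by $\eps(x)$ and $\eps(y)$, so $\phi$ is recorded by the primitive row vector $(\phi(x),\phi(y))=(a,b)$. First I would choose a matrix $M\in\gl_2(\Z)$ whose second row is $(a,b)$ --- this is possible because $(a,b)$ is primitive --- and write $M$ as a product of elementary matrices. Each elementary matrix is the map induced on $H_1$ by an elementary Nielsen automorphism of the free group $F=\ll x,y\rr$: a transvection ($x\mapsto x$, $y\mapsto y^{\pm1}x^{\pm1}$, or the mirror fixing $y$), a generator inversion ($x\mapsto x^{-1}$, or the mirror), or the interchange $x\leftrightarrow y$. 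Composing the corresponding automorphisms gives $\gamma\in\Aut(F)$ inducing $M$ on $H_1$. Now let $r'$ be the cyclic reduction of $\gamma(r)$ and put $\pi'=\ll x,y\,|\,r'\rr$. Since $r$ is non-trivial in $F$ so is $\gamma(r)$, hence $r'$ is a non-empty cyclically reduced word; and $G_{\pi'}\cong G_\pi$ has $b_1=2$; so $\pi'$ is nice. Under the isomorphism $G_{\pi'}\cong G$ induced by $\gamma$, the homomorphism $\phi$ is carried to the row vector $(a,b)M^{-1}=(0,1)$, i.e.\ in the presentation $\pi'$ one has $\phi(x)=0$ and $\phi(y)=1$.

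The heart of the matter is the claim that a single elementary Nielsen move changes the marked polytope only by the induced isomorphism on $H_1$ together with a translation; the statement for the full $\gamma$ then follows by iterating over the elementary factors, as translations compose with linear maps into translations. So let $\gamma$ be one such move, $\pi'$ the resulting nice presentation, and $\gamma_*\co H_1(G_\pi;\R)\to H_1(G_{\pi'};\R)$ the induced isomorphism. The tools are: Proposition~\ref{prop:foxpolytope} in the two forms $\MM_\pi=\MM(r_x)-\MM(y-1)=\MM(r_y)-\MM(x-1)$, and likewise for $\pi'$; the Fox chain rule $\tfrac{\partial\gamma(r)}{\partial z}=\gamma(r_x)\tfrac{\partial\gamma(x)}{\partial z}+\gamma(r_y)\tfrac{\partial\gamma(y)}{\partial z}$ for $z\in\{x,y\}$; the fact that multiplying an element of $\Z[G]$ by a monomial (equivalently, by a unit) only translates its marked polytope; and the fact that replacing a relator by a conjugate --- in particular, cyclic reduction --- multiplies each of its Fox derivatives by a unit modulo the group relation, hence leaves their marked polytopes unchanged up to translation. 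Suppose first that $\gamma$ fixes a generator, call it $z$, and let $z'$ be the other generator. Then $\tfrac{\partial\gamma(z)}{\partial z'}=\tfrac{\partial z}{\partial z'}=0$, so the chain rule collapses to $\tfrac{\partial\gamma(r)}{\partial z'}=\gamma(r_{z'})\cdot\tfrac{\partial\gamma(z')}{\partial z'}$, where $\tfrac{\partial\gamma(z')}{\partial z'}$ is a monomial; hence $\MM\big(\tfrac{\partial r'}{\partial z'}\big)=\gamma_*\MM(r_{z'})$ up to translation. Since moreover $\gamma_*\MM(z-1)=\MM(\gamma(z)-1)=\MM(z-1)$, Proposition~\ref{prop:foxpolytope} gives, up to translation,
\[ \MM_{\pi'}=\MM\big(\tfrac{\partial r'}{\partial z'}\big)-\MM(z-1)=\gamma_*\MM(r_{z'})-\gamma_*\MM(z-1)=\gamma_*\big(\MM(r_{z'})-\MM(z-1)\big)=\gamma_*\MM_\pi. \]
The interchange fixes neither generator but is even simpler: the chain rule gives $\tfrac{\partial\gamma(r)}{\partial x}=\gamma(r_y)$, so $\MM_{\pi'}=\gamma_*\MM(r_y)-\MM(y-1)=\gamma_*\MM(r_y)-\gamma_*\MM(x-1)=\gamma_*\MM_\pi$, using $\gamma_*\MM(x-1)=\MM(\gamma(x)-1)=\MM(y-1)$.

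Iterating yields $\MM_{\pi'}=\gamma_*\MM_\pi$ up to translation for the full composite $\gamma$. Transporting $\MM_{\pi'}$ back to $H_1(G;\R)=H_1(G_\pi;\R)$ along the canonical isomorphism $G_{\pi'}\cong G$, which induces $\gamma_*^{-1}$ on $H_1$, turns this into $\MM_\pi$ up to translation; thus $\pi'$ gives rise to the same polytope as $\pi$, and since thickness is translation invariant this is exactly what the subsequent applications of the lemma require. I expect the only real obstacle to be the bookkeeping in the preceding paragraph: for each of the few types of elementary move one must pick the right one of the two Fox-derivative identities of Proposition~\ref{prop:foxpolytope} and keep track of the side on which the unit correction appears --- but each such verification is short and mechanical.
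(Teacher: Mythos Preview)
Your proposal is correct and follows the same underlying idea as the paper's proof: change generators by Nielsen moves and verify via the Fox-derivative description of the polytope (Proposition~\ref{prop:foxpolytope}) that the polytope is unchanged in $H_1(G;\R)$.

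The only difference is one of packaging. The paper runs a bare Euclidean-algorithm induction on $|\phi(x)|+|\phi(y)|$, performing at each step the single transvection $c=yx^{\pm 1}$ that decreases this quantity; it then asserts $\PP(s_c)=\PP(r_y)$ as polytopes in $H_1(G;\R)$ (which is exactly your chain-rule computation in the special case $\gamma(y)=cx^{-\eps}$, where $\tfrac{\partial\gamma(y)}{\partial c}=1$). You instead factor a general $M\in\gl_2(\Z)$ into elementary matrices and treat all elementary Nielsen moves uniformly, obtaining the slightly stronger conclusion that the \emph{marked} polytope is preserved. Your route is more systematic and yields a reusable invariance statement; the paper's is shorter because it only needs the specific transvections arising in the Euclidean algorithm and only the unmarked polytope for the subsequent thickness bound. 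Either way the computation is the same: in the transvection case the chain rule collapses to a monomial factor, and the subtraction by $\MM(z-1)$ is unchanged because the fixed generator stays fixed.
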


\begin{proof}
Let $\pi=\ll x,y\,|\,r\rr$ be a nice $(2,1)$--presentation for $G$. We will prove the lemma by induction on
$|\phi(x)|+|\phi(y)|$. Without loss of generality we can assume that $|\phi(x)|\leq |\phi(y)|$. 

If $|\phi(x)|+|\phi(y)|=1$, then we are done.
So  suppose that $|\phi(x)|+|\phi(y)|>1$. 
Note that the assumption that $\phi$ is surjective means that the ideal generated by $\phi(x)$ and $\phi(y)$ is all of $\Z$.
The assumption that  $|\phi(x)|+|\phi(y)|>1$ thus implies that $\phi(x)\ne 0$.

Now we put $c=yx^\eps$ where $\eps=1$ if $\phi(x)$ and $\phi(y)$ have opposite signs and where $\eps=-1$  if $\phi(x)$ and $\phi(y)$ have the same signs.
Note that $|\phi(c)|<|\phi(y)|$. 

Now we replace every occurrence of $y$ in $r$ by $cx^{-\eps}$ and we then reduce and cyclically reduce the resulting word in $x$ and $c$.
We denote the resulting word by $s$. 
Now it is straightforward to see that as polytopes in $H_1(G;\R)$ we have $\PP(s_c)=\PP(r_y)$ and of course $\PP(x-1)=\PP(x-1)$.
We leave the details to the reader. 
It then follows that $\pi$ and $\ll a,c\,|\,s\rr$ give rise to the same polytope in $H_1(G;\R)$.
\end{proof}

It is clear that Lemma~\ref{lem:splittings1} together with the following lemma
proves Proposition~\ref{prop:splittings1}.

\begin{lemma}\label{lem:splittings2}
Let  $G$ be a group and let  $\phi\co G\to \Z$ be an epimorphism.
Suppose $G$ admits a nice $(2,1)$--presentation   $\pi=\ll a,t\,|\,r\rr$ such that   $\phi(a)=0$, $\phi(t)=1$. Then
\[ c_f(G,\phi)-1\leq \th(\PP_\pi,\phi).\]
\end{lemma}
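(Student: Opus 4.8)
The plan is to produce an explicit splitting of $(G,\phi)$ over a free group whose rank is controlled by the thickness of $\PP_\pi$. Since $\phi(a)=0$ and $\phi(t)=1$, the relator $r$ can be rewritten as a word in which $t$ appears with total exponent sum zero, and conjugating the generator $a$ by powers of $t$ produces the usual "Higman–Neumann–Neumann normal form'' picture. Concretely, I would set $a_i := t^{-i} a\, t^{i}$ for $i \in \Z$, so that $G$ is generated by $t$ together with the $a_i$, and the single relator $r$, after collecting the $t$'s, becomes a word $\rho$ in finitely many of the $a_i$, say $a_m, a_{m+1}, \dots, a_M$. Here the spread $M-m$ is exactly the quantity governed by $\phi$: the maximal $t$-exponent minus the minimal $t$-exponent occurring among the prefixes of $r$, which by Lemma~\ref{lem:defmpi} and Proposition~\ref{prop:foxpolytope} is precisely $\th(\NN,\phi) = \th(\PP_\pi,\phi) + 1$ once one accounts for the $\XX+\YY$ correction (the $\XX$-factor contributes nothing since $\phi(a)=0$, and the $\YY$-factor contributes the $+1$).

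**Key steps in order.** First I would make the substitution above precise and record that $G$ has the presentation
\[ G \;=\; \ll\, a_m,\dots,a_M,\; t \;\mid\; \rho,\ \ t a_i t^{-1} = a_{i-1}\ (m < i \le M)\,\rr, \]
i.e. $G$ is an HNN extension of the one-relator group $A := \ll a_m,\dots,a_M \mid \rho\rr$ with stable letter $t$, where the associated subgroups are $B_- := \ll a_m,\dots,a_{M-1}\rr$ and $B_+ := \ll a_{m+1},\dots,a_M\rr$, and $t$ conjugates $B_+$ onto $B_-$. Second, I would invoke Weinbaum's theorem (Theorem~\ref{thm:weinbaum}) — or more precisely the Magnus–Karrass–Solitar Freiheitssatz, which is the relevant consequence — to conclude that $B_-$ and $B_+$ are in fact \emph{free} groups, freely generated by the displayed generators: since $\rho$ is cyclically reduced and involves $a_m$ and $a_M$, any proper sub-alphabet generates a free subgroup of $A$. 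Thus the edge group is free of rank $M-m$. Third, to get from an HNN picture with finitely generated vertex group $A$ to a genuine \emph{splitting} in the sense of Section~\ref{section:splittings} one must check $A$ is finitely generated (clear, it is $2$-generated after all: $a_m$ and $t$ suffice once we remember $a_i = t^{-i+m}a_m t^{i-m}$, but as a vertex group of the splitting we keep the presentation above which is finitely generated) and that $\phi$ kills $A$ and sends $t\mapsto 1$ (immediate from $\phi(a)=0$). Hence $(G,\phi)$ splits over the free group $B_+$ of rank $M-m = \th(\PP_\pi,\phi)$, giving $c_f(G,\phi) \le M-m+1 = \th(\PP_\pi,\phi)+1$, which is the claim.

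**The main obstacle.** The genuinely delicate point is the bookkeeping identifying $M-m$ with $\th(\PP_\pi,\phi)+1$, i.e. matching the combinatorics of $t$-exponents along the walk tracing $r$ with the thickness of the subtracted polytope. I would argue: the walk defining $\NN=\MM(r_0,\dots,r_{l(r)})$ has $\phi$-width equal to $\max_i \phi(\eps(r_i)) - \min_i \phi(\eps(r_i))$, and since every step of the walk is either an $a$-step (contributing $0$ to $\phi$, as $\phi(a)=0$) or a $t$-step (contributing $\pm1$), this width equals the range of $t$-exponents of prefixes, which is what determines which $a_i$ appear in $\rho$. On the other hand $\NN = \MM_\pi + \XX + \YY$, so $\th(\NN,\phi) = \th(\MM_\pi,\phi) + \th(\XX,\phi) + \th(\YY,\phi) = \th(\PP_\pi,\phi) + 0 + 1$ by Lemma~\ref{lem:addth}(1) together with $\phi(a)=0$, $\phi(t)=1$. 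A secondary subtlety is ensuring the rewriting of $r$ into $\rho$ together with the conjugation relations really does present $G$ and not some quotient — this is the standard normal-form argument for HNN extensions and I would cite it rather than reprove it, but care is needed that $\rho$ actually does involve both extreme generators $a_m$ and $a_M$ (true precisely because the $t$-exponent achieves its max and min somewhere strictly inside the word after cyclic reduction, which in turn uses that $r$ is cyclically reduced and $\phi$ is onto).
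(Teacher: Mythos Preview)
Your approach is the same as the paper's: Magnus rewriting of $r$ in terms of the conjugates of $a$ by powers of $t$, followed by the Freiheitssatz to see that the edge groups of the resulting HNN decomposition are free, and finally a computation identifying the rank of the edge group with $\th(\PP_\pi,\phi)+1$. The paper does the last step by computing $\th(\PP(r_t),\phi)$ directly and invoking Proposition~\ref{prop:foxpolytope}, whereas you use $\NN=\MM_\pi+\XX+\YY$ and additivity of thickness; these are equivalent.

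There is, however, a bookkeeping inconsistency you should fix. In the plan you (correctly) write $M-m=\th(\NN,\phi)=\th(\PP_\pi,\phi)+1$, but in the key-steps paragraph you then assert both ``rank $M-m=\th(\PP_\pi,\phi)$'' and ``$c_f(G,\phi)\le M-m+1$''. Neither is right on its own: the edge group $B_+=\langle a_{m+1},\dots,a_M\rangle$ has rank $M-m$, so $c_f(G,\phi)\le M-m$; and $M-m$ equals $\th(\PP_\pi,\phi)+1$, not $\th(\PP_\pi,\phi)$. The two slips cancel, so your final inequality is correct, and your ``main obstacle'' paragraph in fact has the arithmetic straight. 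Also, the parenthetical claiming $A$ is $2$-generated by $a_m$ and $t$ is off---$t\notin A$---but this is harmless since $A$ is visibly finitely generated on $a_m,\dots,a_M$.
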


\begin{proof}
After a cyclic permutation of the letters in $r$ we can and will assume
that $r$ is of the form $r=t^{m_1}x^{n_1}\cdot t^{m_2}x^{n_2}\cdot \cdot \dots \cdot t^{m_k}x^{n_k}$ where all the $m_i$ and $n_i$ are non-zero.
Given $j\in \{1,\dots,k\}$ we write $M_j=m_1+\dots+m_j$. We consider
\[ d:=\max\{ M_1,\dots,M_k\} \mbox{ and } D:=\min\{M_1,\dots,M_k\}.\]
Now we have the following claim.

\begin{claim}
The pair $(G,\phi)$ splits over a free group of rank $D-d$.
\end{claim}

Note that 
\[ r=\left(t^{M_1}xt^{-M_1}\right)^{n_1}\left(t^{M_2}xt^{-M_2}\right)^{n_2}\cdot \dots \cdot \left(t^{M_k}xt^{-M_k}\right)^{n_k}.\]
It thus follows from Tietze transformations that the  assignment $x_i\mapsto t^iat^{-i}$  induces an isomorphism
\[ \ll x_d,\dots,x_D,t| x_{M_1}^{n_1}\cdot \dots\cdot x_{M_k}^{n_k},x_{i+1}=tx_it^{-1}\mbox{ for }i=d,\dots,D-1\rr\xrightarrow{\cong} \ll a,t|r\rr.\]
We  write $A=\ll x_d,\dots,x_D|x_{M_1}^{n_1}\cdot \dots\cdot x_{M_k}^{n_k}\rr$.
It follows from the Freiheitssatz, see \cite[Section~II.5]{LS77}, that $x_d,\dots,x_{D-1}$ and $x_{d+1},\dots,x_D$ each generate a free subgroup of $A$.
Now we write $B=\ll x_d,\dots,x_{D-1}\rr$ and we denote by $\varphi\co B\to A$ the injective map which is given by $\varphi(x_i)=x_{i+1}$ for $i=d,\dots,D-1$.
Note that 
\[ \ll x_d,\dots,x_D,t| x_{M_1}^{n_1}\cdot \dots\cdot x_{M_k}^{n_k},x_{i+1}=tx_it^{-1}\mbox{ for }i=d,\dots,D-1\rr=\ll A,t|\varphi(B)=tBt^{-1}\rr.\]
We thus showed that  the pair $(G,\phi)$ splits over the free group $B=\ll x_d,\dots,x_{D-1}\rr$ of rank $D-d$.
This concludes the proof of the claim.

Now the lemma follows from the following claim.

\begin{claim}
\[ \th(\PP_\pi,\phi)=D-d-1.\]
\end{claim}

We note that the Fox derivative $r_t$ is given by
\[ r_t=\sum_{i=1}^k \prod_{j=1}^{i-1} t^{m_j}x^{n_j}\eps_i(1+t+\dots+t^{|m_{i}|})\]
where $\eps_i=1$ if $m_i>0$ and $\eps_i=-t^{-1}$ if $m_i<0$. 
It follows easily that $\th(\PP(r_t),\phi)=D-d-1$. Evidently we have
$\th(\PP(x-1),\phi)=0$. It  follows from Proposition~\ref{prop:foxpolytope} and the additivity of thickness, 
see Lemma~\ref{lem:addth} (1), that
\[ \th(\PP_\pi,\phi)=\th(\PP(r_t),\phi)-\th(\PP(x-1),\phi)=D-d-1.\]
This concludes the proof of the claim and thus of the lemma.
\end{proof} 

\subsection{Lower bounds on the complexity of splittings}
In this section we will prove the following proposition which gives us lower bounds on $c(G,\phi)$.

\begin{proposition}\label{prop:splittings2}
Let  $G$ be a group which admits a nice $(2,1)$--presentation.
Suppose that  $G$ is residually  $\GG$. Then for any epimorphism $\phi\co G\to \Z$ we have
\[ c(G,\phi)\geq \th(\PP_\pi,\phi) + 1.\]
\end{proposition}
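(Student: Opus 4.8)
The plan is to derive a lower bound on the rank of any group $B$ over which $(G,\phi)$ splits by passing to a well-chosen admissible quotient and computing Reidemeister torsion there. Suppose $(G,\phi)$ splits over a finitely generated group $B$, so there is an isomorphism $f\co G\xrightarrow{\cong}\ll A,t\,|\,\mu(B)=tBt^{-1}\rr$ with $A$ finitely generated, $B\le A$, $\mu\co B\to A$ a monomorphism, and $\phi$ corresponding to $a\mapsto 0$, $t\mapsto 1$. Since $G$ is residually $\GG$, Corollary~\ref{cor:existmaxtau} and Proposition~\ref{prop:dpphiequalsx} give an admissible homomorphism $\varphi\co G\to\G$ with $\G\in\GG$ realizing $\PP(\tau(X_\pi,\varphi))=\PP_\pi$. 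First I would arrange, using Lemma~\ref{lem:capadmissible} if necessary, that $\varphi$ also behaves well with respect to the splitting data; in particular the restriction of $\varphi$ to $B$ and to $A$ gives homomorphisms to subgroups of $\G$, which still lie in $\GG$ since $\GG$ is closed under subgroups. The key point is that the Mayer--Vietoris sequence of the HNN-splitting, together with the multiplicativity of Reidemeister torsion under such splittings, expresses $\tau(X_\pi,\varphi)$ (up to units) as a quotient involving the torsion of the mapping-torus-type complex built from $B$, namely something of the form $\det(\operatorname{id}-t\cdot\mu_*)$ acting on $\K(\G)\otimes\Z[B]$-homology, or more concretely $\tau(X_\pi,\varphi)\doteq \det(t\cdot M - N)$ for square matrices $M,N$ over $\Z[\varphi(A)]$ of size equal to (a presentation-length quantity bounded by) $\rank(B)$.

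The heart of the argument is then a polytope estimate: I would show that for any homomorphism $\psi\co G\to\Z$, the $\psi$-thickness of $\PP(\tau(X_\pi,\varphi))$ is bounded above by $\rank(B)-1$. Concretely, writing the torsion as (the Dieudonné determinant of) a matrix $t\cdot M-N$ where $M,N$ are $k\times k$ over $\Z[\varphi(A)]$ with $\varphi(A)\subset\ker(\psi\text{-part})$ — i.e.\ the entries are supported in the $\phi=0$ hyperplane — the determinant is a sum of products of $k$ entries, each product contributing at most $k$ to the power of $t$ (the $\phi$-direction), while the $\phi=0$ directions contribute the Newton polytope of $\det$. Thus $\th(\PP(\tau(X_\pi,\varphi)),\phi)\le k-1 = \rank(B)-1$. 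Since $\PP(\tau(X_\pi,\varphi))=\PP_\pi$ by Proposition~\ref{prop:dpphiequalsx}, this yields $\th(\PP_\pi,\phi)\le \rank(B)-1$ for every $B$ over which $(G,\phi)$ splits, hence $\th(\PP_\pi,\phi)\le c(G,\phi)-1$, which is the claim. I would also use Lemma~\ref{lem:splittings1} to reduce at the outset to a presentation $\ll a,t\,|\,r\rr$ with $\phi(a)=0$, $\phi(t)=1$, so that the ``$\phi$-direction'' is literally the $t$-direction and these Newton-polytope bookkeeping steps match the explicit form of $r_t$ computed in Lemma~\ref{lem:splittings2}.

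The main obstacle, I expect, is establishing the multiplicativity/Mayer--Vietoris identification of $\tau(X_\pi,\varphi)$ with a determinant of an explicit $\rank(B)\times\rank(B)$ matrix over $\Z[\varphi(A)]$ with entries concentrated on $\ker(\phi)$ — i.e.\ setting up the right finite CW-model for the HNN-splitting (an interval bundle / mapping torus of a complex with $\pi_1=B$, glued along $\mu$), checking its cells give a based complex computing $\tau(X_\pi,\varphi)$ after the change of rings along $\varphi$, and verifying the torsion is the alternating product of the appropriate block determinants. One has to be careful that $B$ need not be finitely presented, only finitely generated, so one works with a (possibly infinite) $2$-complex or instead uses the algebraic fact that $K_1(\K(\G))$-torsion is still computable from a finite partial resolution; alternatively one can localize and use that $\K(\G)$ is a skew field, so ranks suffice and the relevant matrices can be taken of size $\rank(B)$ (number of $1$-cells needed) without needing relators. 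Once that algebraic identification is in hand, the Newton-polytope/thickness estimate and the comparison with $\PP_\pi$ are routine given Proposition~\ref{prop:foxpolytope}, Lemma~\ref{lem:addth}(1) and Lemma~\ref{lem:comparetau}.
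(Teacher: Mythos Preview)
Your high-level strategy matches the paper's: both reduce to Proposition~\ref{prop:dpphiequalsx} (so that $\PP(\tau(X_\pi,\varphi))=\PP_\pi$ for a suitable admissible $\varphi$) and then bound $\th_\phi(\PP(\tau(X_\pi,\varphi)))$ from above using the HNN-structure of the splitting. The difference is in how that bound is obtained, and your version has a real gap.

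You assert that $\tau(X_\pi,\varphi)\doteq\det(tM-N)$ for square matrices $M,N$ of size $\rank(B)$ over $\Z[\varphi(A)]$, and then that the $\phi$-thickness of this determinant is at most $\rank(B)-1$. Neither step is justified. First, the natural chain complex coming from the HNN presentation $\ll g_1,\dots,g_k,t\mid r_1,r_2,\dots,\mu(x_j)^{-1}tx_jt^{-1}\rr$ has a boundary matrix with $k$ columns (one per generator of $A$) and possibly infinitely many rows; there is no evident reason why the torsion collapses to a $\rank(B)\times\rank(B)$ determinant. The Mayer--Vietoris/multiplicativity route you sketch would, even if carried out, produce an alternating product involving contributions from $A$ and $B$ separately, not a single small determinant. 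Second, even granting a $k\times k$ matrix $tM-N$ with entries in $\ker\phi$, the $\phi$-thickness of its determinant is bounded by $k$, not $k-1$; you yourself note that each product of $k$ entries contributes at most $k$ to the power of $t$. The missing $-1$ is not a normalisation artefact---it genuinely comes from the $H_0$ contribution (equivalently, from the factor $\varphi(t-1)$ in the torsion), which your formula $\tau\doteq\det(tM-N)$ has silently absorbed.

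The paper avoids both problems by working not with the polytope of $\tau$ directly but with the auxiliary quantity $\dim_\phi(M):=\dim_{\K(\G')}(\K(\G')\otimes_{\Z[\G']}M)$, where $\G'=\ker(\phi\co\G\to\Z)$. One first shows (Lemma~\ref{lem:euler}, an Euler-characteristic computation) that $\th_\phi(\PP(\tau(X_\pi,\varphi)))=\dim_\phi H_1(X_\pi;\Z[\G])-\dim_\phi H_0(X_\pi;\Z[\G])$; this equality is established using the $2$-complex $X_\pi$, where the matrices are $1\times 2$ and $2\times 1$. Then one replaces $X_\pi$ by the $2$-complex $Y$ built from the HNN presentation (same $\pi_1$, hence same $H_i$), and applies Lemma~\ref{lem:euler} again to get $\dim_\phi H_1-\dim_\phi H_0=\dim_\phi\big(\Z[\G]^k/\Z[\G]^l(\varphi(P)+t\varphi(Q))\big)-\dim_\phi\big(\Z[\G]/\Z[\G]\varphi(t-1)\big)$. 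The second term equals $1$ (Lemma~\ref{lem:upperbound-degree-2}), and the first is bounded by $c=\rank(B)$ because $Q$ has at most $c$ nonzero rows (Lemma~\ref{lem:upperbound-degree}). This last lemma is exactly the replacement for your determinantal estimate: it works for rectangular (even infinite-by-$k$) matrices $A+tB$ and gives the bound in terms of the number of nonzero rows of $B$, sidestepping the need for $A$ (or $B$) to be finitely presented. Your suggested reduction via Lemma~\ref{lem:splittings1} to a presentation with $\phi(a)=0$, $\phi(t)=1$ is not used---and would not help---since the splitting group $B$ has a priori nothing to do with the presentation generators.
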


This proposition is in fact a consequence of 
Proposition~\ref{prop:dpphiequalsx} and the following proposition.

\begin{proposition}\label{prop:splittings-lower-bound}
Let  $G$ be a group which admits a nice $(2,1)$--presentation.
Let $\varphi\co G\to \G$ be an admissible homomorphism
such that $\tau(X_\pi,\varphi)\ne 0$ and such that $\varphi(x)$ and $\varphi(y)$ are non-trivial. Then for any epimorphism $\phi\co G\to \Z$ we have
\[ c(G,\phi)\geq \th(\PP(\tau(X_\pi,\varphi)),\phi).\]
\end{proposition}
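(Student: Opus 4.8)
The plan is to start from a splitting of $(G,\phi)$ over a finitely generated group $B$ of minimal rank $c(G,\phi)$ and extract from it a geometric statement about the Reidemeister torsion $\tau(X_\pi,\varphi)$. Concretely, a splitting gives an isomorphism $G\cong \ll A,t\,|\,\mu(B)=tBt^{-1}\rr$ where $A$ is finitely generated, $B\leq A$, $\mu\co B\to A$ is a monomorphism, and $\phi$ sends $t\mapsto 1$, $A\mapsto 0$. Topologically, this corresponds to an expression of (a space homotopy equivalent to) $X_\pi$ as a mapping torus-like object: one builds a CW-complex $Y$ with $\pi_1(Y)=G$ by gluing a cobordism from a $2$-complex $W_B$ with $\pi_1\cong B$ (of which one may take one with only $\rank(B)$ one-cells) along the two maps induced by the inclusion $B\hookrightarrow A$ and by $\mu$. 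Since $\pi$ and this decomposition both present $G$, and since (by \cite[Proposition~11.1]{LS77} and Waldhausen's vanishing of the Whitehead group for torsion-free one-relator groups, as used in Proposition~\ref{prop:invtofg}) the relevant complexes are aspherical with trivial Whitehead group, the torsion $\tau(X_\pi,\varphi)$ can be computed from this splitting decomposition.

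The key computation is then the following: pushing $\varphi$ through the splitting, the homomorphism $\varphi$ restricted to $A$ (and to $B$) lands in $\G$, and the chain complex of the mapping-torus-type decomposition computing $\tau(X_\pi,\varphi)$ takes the shape of a ``$1\times 1$ matrix'' over $\K(\G)$ of the form $\det(t\cdot I - M)$ up to units, where $M$ encodes the two embeddings of the based chain complex of $W_B$ into that of the cobordism, and $t$ is the lift of $\phi$. Here $W_B$ has at most $\rank(B)=c(G,\phi)$ one-cells, so $M$ is (conjugate to) a matrix of size at most $c(G,\phi)$ over $\K(\ker\phi\to\G)$-type coefficients, with $t$ acting by the shift corresponding to $\phi$. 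The $\phi$-thickness of $\PP(\det(tI-M))$ — i.e. the spread of $t$-exponents — is then at most the size of the matrix, namely $c(G,\phi)$; this is the polytope/Newton-polygon analogue of the elementary fact that the $t$-degree of $\det(tI-M)$ for an $n\times n$ matrix $M$ over a ``coefficient'' ring is at most $n$. Combining, $\th(\PP(\tau(X_\pi,\varphi)),\phi)\leq c(G,\phi)$, which is the assertion (the proposition as stated even allows the non-strict inequality).

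To make this rigorous I would, first, use Lemma~\ref{lem:splittings1}-style normalization together with the definition of splitting to realize $(G,\phi)$ as an HNN-extension of $A$ along $B$ and record explicit generators; second, write the Fox-calculus chain complex of the associated $2$-complex and identify, via the $\varphi$-twisted chain complex over $\K(\G)$, the torsion $\tau(X_\pi,\varphi)$ with a Dieudonn\'e determinant of a block built from the inclusion $\iota\co B\to A$ and $\mu\co B\to A$; third, invoke the invariance of $\tau$ under simple homotopy equivalence (justified exactly as in Proposition~\ref{prop:invtofg}, using asphericity of one-relator $2$-complexes and Waldhausen's $\Wh(G)=0$) to match this with $\tau(X_\pi,\varphi)$ up to $\pm g$; and fourth, bound the $\phi$-spread of the Newton polytope of that determinant by the number of generators of $B$, using the additivity $\th(\PP+\QQ,\phi)=\th(\PP,\phi)+\th(\QQ,\phi)$ of Lemma~\ref{lem:addth}(1) and the evident bound on the polytope of a determinant of an $n\times n$ matrix whose entries have $\phi$-values in $\{0,1\}$-translates. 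I expect the main obstacle to be step two: producing a clean and honest identification of $\tau(X_\pi,\varphi)$ with a characteristic-polynomial-type Dieudonn\'e determinant over the skew field $\K(\G)$, since the coefficient ring here is non-commutative and the reduction to ``an $n\times n$ matrix with a shift by $t$'' requires care in choosing bases and controlling the $\ker\phi$-part of $\varphi$; once that normal form is in hand, the thickness bound is a short combinatorial step.
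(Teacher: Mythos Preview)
Your strategy has the right spirit, but there is a genuine gap that the paper's proof is designed precisely to avoid.

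The complex $Y$ you build from a splitting $G\cong\ll A,t\,|\,\mu(B)=tBt^{-1}\rr$ need not be finite: the definition of $c(G,\phi)$ only guarantees that $A$ is finitely \emph{generated}, not finitely presented, so $Y$ may have infinitely many $2$-cells. Consequently $\tau(Y,\varphi)$ is not defined, and the comparison ``$\tau(X_\pi,\varphi)=\tau(Y,\varphi)$ up to $\pm g$'' does not get off the ground. (Even if you assume $A$ finitely presented, you would still have to argue that the resulting chain complex over $\K(\G)$ is acyclic and square-shaped before a Dieudonn\'e determinant makes sense, and the hoped-for $\det(tI-M)$ form is specific to genuine mapping tori; a general HNN extension with $B\subsetneq A$ does not produce a characteristic-polynomial block.) Your invocation of Waldhausen's $\Wh(G)=0$ is also not available at this level of generality, since the proposition does not assume $G$ torsion-free.

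The paper sidesteps all of this by replacing torsion with the quantity $\dim_\phi H_i:=\dim_{\K(\G')}\big(\K(\G')\otimes_{\Z[\G']}H_i(-;\Z[\G])\big)$ for $i=0,1$. These depend only on $\pi_1$, so they agree for $X_\pi$ and for the (possibly infinite) $2$-complex $Y$ built from the splitting, with no Whitehead-group input. One first shows, via an Euler-characteristic type lemma for chain complexes of the shape $0\to\Z[\G]^l\to\Z[\G]\oplus\Z[\G]^k\to\Z[\G]\to 0$ (Lemma~\ref{lem:euler}), that
\[
\th_\phi\big(\PP(\tau(X_\pi,\varphi))\big)=\dim_\phi H_1(X_\pi;\Z[\G])-\dim_\phi H_0(X_\pi;\Z[\G]).
\]
Then on the $Y$ side, the Fox matrix of the HNN presentation has the form $M_1=P_1+tQ_1$ with $P_1,Q_1$ over $\Z[\G']$ and all but $c=c(G,\phi)$ rows of $Q_1$ zero; an elementary bound (Lemma~\ref{lem:upperbound-degree}) gives $\dim_\phi\big(\Z[\G]^k/\Z[\G]^l(\varphi(P_1)+t\varphi(Q_1))\big)\leq c$, and together with $\dim_\phi(\Z[\G]/\Z[\G]\varphi(t-1))=1$ one obtains $c-1\geq\dim_\phi H_1-\dim_\phi H_0$. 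This is the substitute for your ``degree of $\det(tI-M)\leq n$'' step, and it works without ever forming a torsion for $Y$.
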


\begin{remark}
This proposition  is related to  \cite[Theorem~8.5]{FSW13} where we gave lower bounds on the  splitting complexity  in terms of twisted Reidemeister torsion. The proposition is also related to the lower bounds on the knot genus and Thurston norm  which were obtained by Cochran and Harvey \cite{Coc04,Ha05} in terms of degrees of higher-order Alexander polynomials.
\end{remark}

In the proof of Proposition~\ref{prop:splittings-lower-bound} we will need several results from \cite{Coc04,Ha05,Fr07}.
In order to state the results we need a few definitions.
Let $\G$ be a group and let $\phi\co \G\to \Z$ be an epimorphism. We write $\G'=\ker(\phi\co \G\to \Z)$. 
Given a $\Z[\G]$-left module $M$ we  define
\[ \dim_{\phi}(M):=\dim_{\K(\G')}\left(\K(\G') \otimes_{\Z[\G']} M\right).\]

\begin{example}
Let $\G=\ll t\rr$ and let $\phi=\id$.
As usual we identify the group ring of $\G=\Z$ with $\zt$. Let $A(t)$ be a $k\times k$-matrix over $\zt$ with $\det(A(t))\ne 0$. 
We then have $\G'=\{0\}$ and 
\[ \ba{rcl} \dim_{\phi}\left(\zt^k/A(t)\zt^k\right)&=&\dim_{\Q}\left(\Q \otimes_{\Z}\zt^k/\zt^k A(t)\right)\\[2mm]
&=&\dim_{\Q}\left(\qt^k/\qt^k A(t)\right)=\deg(\det(A(t)).\ea\]
Here and throughout the paper recall that given a ring $R$ we view elements in $R^k$ as row-vectors and matrices act on $R^k$ by right multiplication.
\end{example}

We will  need the following lemma.

\begin{lemma}\label{lem:upperbound-degree}
Let $\G$ be a group in $\GG$  and let $\phi\co \G\to \Z$ be an epimorphism. We write $\G'=\ker(\phi)$. Let $t\in\G$ be an element
with $\phi(t)=1$. 
Let $A$ and $B$ be matrices over $\Z[\G']$ with $k$ columns and $l$ rows.
Here $k\in \N$ and $l\in \N\cup \{\infty\}$. Suppose that all but $c$ rows of $B$ are zero. 
If 
\[ \dim_\phi\left( \Z[\G]^k/\Z[\G]^l(A+tB)\right)<\infty\]
then
\[ \dim_\phi\left( \Z[\G]^k/\Z[\G]^l(A+tB)\right)\leq c.\]
\end{lemma}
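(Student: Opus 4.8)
The plan is to reduce the statement to a linear-algebra estimate over the skew field $\K(\G')$ by tensoring with it, exactly as in the Example preceding the lemma. Write $R=\Z[\G]$, $R'=\Z[\G']$, and $K=\K(\G')$. Since $\phi(t)=1$, the group ring $R$ is a (twisted) Laurent polynomial ring $R'[t^{\pm 1}]$ over $R'$, with $t$ acting on $R'$ by the conjugation automorphism; localising the coefficients gives that $R$ embeds in the (twisted) Laurent ring $\mathcal{K}:=K[t^{\pm 1}]$, which is a (non-commutative) principal ideal domain admitting a degree function $\deg_t$. The key identification, which I would record first, is
\[ \dim_\phi\!\left(R^k/R^l(A+tB)\right)=\dim_K\!\left(\mathcal{K}^k/\mathcal{K}^l(A+tB)\right)\]
whenever the left-hand side is finite; this is the content of the Example with $\G=\ll t\rr$ replaced by the present $\G$, and follows because $K\otimes_{R'}(-)$ is exact (Ore localisation) and commutes with the $t$-adic direct-sum/quotient structure. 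Under this identification $\dim_\phi$ becomes the $K$-dimension of the torsion $\mathcal{K}$-module presented by the matrix $A+tB$ over the PID $\mathcal{K}$, i.e.\ the sum of the $\deg_t$ of the elementary divisors, equivalently the $\deg_t$ of the Dieudonn\'e determinant of any full-rank square submatrix.

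With this reformulation the hypothesis says: $A$ has entries of $t$-degree $0$, and $A+tB$ differs from $A$ only in $c$ rows, where $t$-degrees are at most $1$. The claim $\dim\le c$ is then a statement about how much the degree of a determinant can jump when $c$ rows are perturbed by degree-$1$ terms. Concretely: after row and column operations over $\mathcal{K}$ (which do not change the presented module), we may assume $l$ is finite and $A+tB$ is square of full rank; then I would expand the Dieudonn\'e determinant (or, more elementarily, estimate $\deg_t\det$) and observe that a term of the Leibniz-type expansion using $p$ of the $c$ perturbed rows in their $tB$-part contributes $t$-degree at most $p\le c$, while the unperturbed rows contribute degree $0$. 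Hence $\deg_t$ of the determinant is at most $c$, giving the bound. The two routine points to spell out are (i) reducing to the square full-rank case when $l=\infty$ — here finiteness of $\dim_\phi$ is what lets us discard all but finitely many rows and saturate — and (ii) making the determinant-degree bound work in the genuinely non-commutative setting, where one uses the multiplicativity and the degree-additivity of the Dieudonn\'e determinant together with the fact that $\deg_t(x+y)\le\max(\deg_t x,\deg_t y)$ on $\mathcal{K}$.

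The main obstacle I anticipate is handling the non-commutativity cleanly: over $\mathcal{K}$ there is no naive Leibniz formula, so the degree bound must be phrased via the Dieudonn\'e determinant and the Smith-type normal form for matrices over the PID $\mathcal{K}$, and one has to check that the reduction to normal form can be arranged so that the ``$c$ rows of degree $\le 1$, the rest of degree $0$'' structure still controls the total degree. A convenient way around this is to argue inductively: perform column operations to clear one unperturbed row (degree $0$), delete that row and the pivot column without changing the total degree, reducing $k$ and keeping the count $c$ of perturbed rows fixed until only the $\le c$ perturbed rows remain, at which point the presented module is a quotient of $\mathcal{K}^c$ and hence has $K$-dimension at most $c\cdot\max\deg_t\le c$. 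I would also double-check the degenerate cases $c=0$ (then $A+tB=A$ has degree-$0$ entries, so after clearing it presents the zero module, dimension $0$) and $l<k$ (then the module is not torsion and $\dim_\phi=\infty$, so the hypothesis is vacuous).
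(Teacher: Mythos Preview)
Your overall architecture matches the paper's proof: pass to the twisted Laurent polynomial ring $K[t^{\pm1}]$ over $K=\K(\G')$, use that this is a PID and that the passage is flat, reduce to finitely many rows, and then bound the $K$-dimension using the degree structure of the matrix. The paper does exactly this, citing \cite[Proposition~4.5]{Ha05} for the identification with $K[t^{\pm1}]$, \cite[Proposition~II.3.5]{Ste75} for flatness, and then outsourcing the final bound to \cite[Proof of Proposition~9.1]{Ha05} rather than arguing it directly.

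Two points where your sketch needs tightening. First, the reduction to finite $l$: you say ``finiteness of $\dim_\phi$ is what lets us discard all but finitely many rows''. That is not the reason. The paper observes instead that the spans $S_i\subset K[t^{\pm1}]^k$ of the first $i$ rows form an ascending chain which must stabilise because $K[t^{\pm1}]$ is a PID (so $K[t^{\pm1}]^k$ is Noetherian). This works whether or not $\dim_\phi$ is finite.

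Second, your concluding sentence is garbled. After your inductive clearing of the unperturbed rows (which is correct, and is essentially what lies behind Harvey's argument) you are left with a $c''\times k''$ matrix $M''$ with $c''\le c$ and entries in $K+tK$. The presented module is $K[t^{\pm1}]^{k''}/K[t^{\pm1}]^{c''}M''$; this is \emph{not} ``a quotient of $\mathcal{K}^c$'', and the expression ``$c\cdot\max\deg_t\le c$'' does not parse. The correct finish is: torsion forces $k''\le c''\le c$; choose a $k''\times k''$ submatrix $N$ of $M''$ invertible over $\K(\G)$, so that the module is a quotient of $K[t^{\pm1}]^{k''}/K[t^{\pm1}]^{k''}N$; since the entries of $N$ lie in $K+tK$, the highest- and lowest-$t$-degree valuations on $\K(\G)^\times_{\op{ab}}$ give $\deg_t(\det_D N)\le k''$, hence $\dim_K\le k''\le c$. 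This last inequality on the Dieudonn\'e determinant is precisely the content of \cite[Proposition~9.1]{Ha05} that the paper invokes; your Smith-normal-form remark is the right tool for it, but it needs to be stated rather than replaced by the incorrect ``quotient of $\mathcal{K}^c$'' line.
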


\begin{proof}
Let $\G$ be a group in $\GG$ and let $\phi\co \G\to \Z$ be an epimorphism. We write $\G'=\ker(\phi)$. Let $t\in\G$ be an element
with $\phi(t)=1$.
We write $K=\K(\G')$. We recall several facts and conventions established and collected  in \cite{Coc04,Ha05,Fr07}.
First of all, by \cite[Proposition~4.5]{Ha05} we can identify $\K(\G') \otimes_{\Z[\G']}\Z[\G]$ with a twisted Laurent polynomial ring 
$\kt$ over $\K:=\K(\G')$. 
For a $\Z[\G]$-module $M$ we have by definition that $\dim_\phi(M)=\dim_K(\kt \otimes_{\Z[\G]} M)$. 
We also note that by  \cite[Proposition~II.3.5]{Ste75} the ring $\kt$ is flat over $\Z[\G]$ since $\kt$ is a localization of $\Z[\G]$. 

Given $i$ in $\N$ we denote by $A_i$ and $T_i$ the $i\times k$-matrices which are given by the first $i$ rows of $A$ and $B$.
Now we have the following claim.

\begin{claim}
There exists an $i$ such that the projection map
\[ \kt^k/\kt^i(A_i+tT_i)\to  \kt^k/\kt^l(A+tB)\]
is an isomorphism.
\end{claim}

For $i\in \N\cup \{\infty\}$ we consider 
\[ S_i:=\mbox{the span over $\kt$ of the first $i$ rows of $A+tB$}.\]
(Since we view all modules as left modules we take of course the left-span of the first $i$ rows.) 
Note that $S_1,S_2,\dots $ is an ascending chain of $\kt$-left submodules of $\kt^k$. Since $\kt$ is a principal ideal domain we deduce from \cite[Proposition~1.21]{La91} that the chain $S_1,S_2,\dots $ stabilizes. Put differently, there exists  an $i$ with 
$S_i=S_{i+1}=\dots$. It thus follows that $S_i=\cup_{j}S_j=S_{\infty}$. 
This concludes the proof of the claim.

We note that for a finitely generated $\kt$-left module $V$ we have $\dim_K(V)<\infty$ if and only if $V$ is a $\kt$-torsion module.
It  follows from the flatness of $\kt$ and the above claim   that 
\[ \ba{rcl}  \dim_{\phi}\left(\Z[\G]^k/\Z[\G]^l(A+tB)\right)&=&\dim_{K}\left(\left(\kt\, \otimes_{\Z[\G]}\, \Z[\G]^k/\Z[\G]^l(A+tB)\right)\right)\\[2mm]
&=&\dim_{K}\left(\kt^k/\kt^l(A+tB)\right)\\
&=&\dim_{K}\left(\kt^k/\kt^i(A_i+tT_i)\right).\ea \]
By assumption $\dim_{K}\left(\kt^k/\kt^i(A_i+tT_i)\right)$ is finite.
By the above this implies that the $\kt$-module $\kt^k/\kt^i(A_i+tT_i)$ is $\kt$-torsion. It follows from 
 \cite[Proof of Proposition~9.1]{Ha05} that 
  $\dim_{K}\left(\kt^k/\kt^i(A_i+tT_i)\right)$   is bounded above by $c$.
\end{proof}

In the following lemma we calculate the  dimension for a module presented by a $1\times 1$-matrix.

\begin{lemma}\label{lem:upperbound-degree-2}
Let $\G$ be a group in $\GG$ and let $\phi\co \G\to \Z$ be an epimorphism. Then given any non-zero element $f$ of $\Z[\G]$ we have
\[ \dim_{\phi}(\Z[\G]/\Z[\G]f)=\th_\phi(\PP(f)).\]
\end{lemma}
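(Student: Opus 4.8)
The plan is to reduce the statement to the description of $\dim_\phi$ via the twisted Laurent polynomial ring $\kt$ over $\K=\K(\G')$ that was recalled in the proof of Lemma~\ref{lem:upperbound-degree}. Write $\G'=\ker(\phi)$ and pick $t\in\G$ with $\phi(t)=1$, so that $\Z[\G]$ embeds in $\kt$ and, by \cite[Proposition~II.3.5]{Ste75}, $\kt$ is flat over $\Z[\G]$; hence $\dim_\phi(\Z[\G]/\Z[\G]f)=\dim_\K(\kt/\kt\cdot f)$. Given the non-zero element $f\in\Z[\G]$, group its terms by the value of $\phi$: there are integers $a\le b$ with $\th_\phi(\PP(f))=b-a$, and we may write $f=\sum_{i=a}^b t^i c_i$ with $c_i\in\Z[\G']$, where $c_a\ne 0$ and $c_b\ne 0$ (this uses $\phi(t)=1$ to pull the powers of $t$ to the left; here one must be mildly careful because the ring is twisted, but since each $c_i$ lands in $\Z[\G']$ this is just a bookkeeping step). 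In $\kt$ the elements $c_a$ and $c_b$ are units, since $\Z[\G']\subset\K^\times\cup\{0\}$ and they are non-zero.

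The key step is then a direct computation in the (twisted, but still a principal ideal domain) ring $\kt$. After multiplying $f$ on the left by the unit $c_a^{-1}t^{-a}\in\kt$ — which does not change the cyclic module $\kt/\kt\cdot f$ up to isomorphism — we may assume $f=1+t u_1+\dots+t^{\,b-a}u_{b-a}$ with $u_{b-a}$ a unit of $\K$. Such an element of $\kt$, having invertible bottom and top coefficients, generates a left ideal whose quotient is free of rank exactly $b-a$ over $\K$: a $\K$-basis of $\kt/\kt\cdot f$ is given by $1,t,\dots,t^{\,b-a-1}$. This is the standard fact that a monic (on both ends) Laurent polynomial of degree-span $d$ presents a $d$-dimensional module; I would verify it by noting that the relation $f=0$ lets one rewrite $t^{\,b-a}$ in terms of lower powers (using invertibility of $u_{b-a}$), and multiplying by $t^{-1}$ and using invertibility of the bottom coefficient lets one rewrite negative powers, so the images of $1,\dots,t^{\,b-a-1}$ span and are $\K$-linearly independent. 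Therefore $\dim_\K(\kt/\kt\cdot f)=b-a=\th_\phi(\PP(f))$, which is the assertion.

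The main obstacle, and the one point that deserves genuine care rather than a wave of the hand, is the twisting: $\kt$ is a \emph{skew} Laurent polynomial ring, so "pulling powers of $t$ to the left'' and "reducing $t^{\,b-a}$'' must be done with the correct automorphism of $\K$ inserted, and one should check that the degree-span $b-a$ — equivalently the thickness $\th_\phi(\PP(f))$ — is genuinely intrinsic to $f$ and unaffected by the choice of $t$ with $\phi(t)=1$ or by left-multiplication by units $t^{a}$ and elements of $\K^\times$. Once it is observed that conjugation by $t$ is a ring automorphism of $\K$ and that $\phi(t^a c)=a$ for $c\in\Z[\G']\setminus\{0\}$, so that the polytope $\PP(f)$ and its $\phi$-thickness only see the exponents of $t$, the twisting causes no real trouble and the dimension count goes through verbatim as in the untwisted case illustrated in the Example preceding the lemma.
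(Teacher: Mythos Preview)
Your proposal is correct and follows essentially the same route as the paper's own proof: reduce via flatness of $\kt$ over $\Z[\G]$ to computing $\dim_{\K}(\kt/\kt f)$, expand $f$ according to $\phi$-value with non-zero top and bottom coefficients in $\Z[\G']$, and then observe that the $\K$-dimension of the quotient equals the degree-span. The paper in fact gives fewer details than you do, simply invoking ``the usual argument from commutative Laurent polynomial rings with coefficients in a field'' for the dimension count, whereas you spell out the normalisation by a unit and the basis $1,t,\dots,t^{b-a-1}$; your extra care about the twisting is appropriate and does not deviate from the intended argument.
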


\begin{proof}
Let $\G$ be a group in $\GG$ and let $\phi\co \G\to \Z$ be an epimorphism. We use some of the notation from the proof of Lemma~\ref{lem:upperbound-degree}. In particular we pick $t\in\G$ with $\phi(t)=1$ and we write $K=\K(\G')$. Furthermore we again  identify $\K(\G') \otimes_{\Z[\G']}\Z[\G]$ with a twisted Laurent polynomial ring 
$\kt$.

By sorting the summands of $f$ according to their $\phi$-values we can write  $f=\sum_{i=d}^D f_it^i$ where $f_i\in \Z[\G']$ with $f_d\ne 0$ and $f_D\ne 0$. It follows easily from the definitions that 
\[ \th_\phi(\PP(f))=D-d.\]
On the other hand we can use the usual argument from commutative Laurent polynomial rings with coefficients in a field to show that 
\[  \dim_{K}(\kt/\kt f) =D-d.\]
But as in the proof of Lemma~\ref{lem:upperbound-degree} we have $\dim_{K}(\kt/\kt f)=\dim_{\phi}(\Z[\G]/\Z[\G]f)$.
\end{proof}

\begin{lemma}\label{lem:euler}
Let $k\in \N$ and $l\in \N\cup \{\infty\}$. Let $\G$ be a group in $\GG$  and let $\phi\co \G\to \Z$ be an epimorphism. Let
\[ C_*:=\quad 0\to \Z[\G]^l\xrightarrow{\bp T_0& T_1\ep }\Z[\G]\oplus \Z[\G]^{k}\xrightarrow{\bp x_0\\ x_1\ep}\Z[\G]\to 0\]
be a chain complex. If $x_0\ne 0$ in $\Z[\G]$, then 
$\dim_\phi(H_0(C))$ is finite and 
\[ \dim_\phi(H_1(C_*))-\dim_\phi(H_0(C_*))=
\dim_\phi(\Z[\G]^k/\Z[\G]^l T_1)-\dim_\phi(\Z[\G]/\Z[\G]x_0).
\]
This equality means in particular that either both sides are finite or both are infinite. 
\end{lemma}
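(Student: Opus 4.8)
The plan is to realise $C_*$ as an extension of two very simple chain complexes, read off the associated long exact homology sequence, and then pass to $\K$--dimensions using that $\kt$ is flat over $\Z[\G]$. As in the proof of Lemma~\ref{lem:upperbound-degree} we fix $t\in\G$ with $\phi(t)=1$, write $\K=\K(\ker\phi)$ and $\K\tpm=\kt$, and recall that $\kt$ is a localization of $\Z[\G]$, hence flat over it.

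First I would introduce the subcomplex $C'_*\subseteq C_*$ concentrated in degrees $0$ and $1$,
\[ C'_*:\quad 0\to 0\to \Z[\G]\xrightarrow{x_0}\Z[\G]\to 0,\]
where the copy of $\Z[\G]$ in degree $1$ is the first summand of $\Z[\G]\oplus\Z[\G]^k$ and the differential is right multiplication by $x_0$. This is genuinely a subcomplex, since in degree $2$ there is nothing to check and $(\Z[\G]\oplus 0)\bp x_0\\ x_1\ep=\Z[\G]x_0\subseteq\Z[\G]=C'_0$. The quotient complex is
\[ C''_*=C_*/C'_*:\quad 0\to \Z[\G]^l\xrightarrow{T_1}\Z[\G]^k\to 0\to 0,\]
with $\Z[\G]^l$ in degree $2$ and $\Z[\G]^k$ in degree $1$: passing to the quotient deletes the $T_0$--block from $\bp T_0&T_1\ep$ and the $x_0$--block from $\bp x_0\\ x_1\ep$, and $C''_0=\Z[\G]/\Z[\G]=0$. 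Since $\G\in\GG$, the ring $\Z[\G]$ is a domain (Section~\ref{section:ore}), so right multiplication by the non-zero element $x_0$ is injective; hence $H_1(C'_*)=0$ and $H_0(C'_*)=\Z[\G]/\Z[\G]x_0$, while $H_0(C''_*)=0$ and $H_1(C''_*)=\Z[\G]^k/\Z[\G]^l T_1$. The long exact homology sequence of $0\to C'_*\to C_*\to C''_*\to 0$ therefore degenerates, in the relevant range, to the four-term exact sequence of $\Z[\G]$--modules
\[ 0\to H_1(C_*)\to \Z[\G]^k/\Z[\G]^l T_1\to \Z[\G]/\Z[\G]x_0\to H_0(C_*)\to 0.\]

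Applying the flat functor $\kt\otimes_{\Z[\G]}-$ yields an exact sequence of $\kt$--modules, in particular of $\K$--vector spaces. For any exact sequence $0\to V_1\to V_2\to V_3\to V_4\to 0$ of $\K$--vector spaces one has $\dim_\K V_1+\dim_\K V_3=\dim_\K V_2+\dim_\K V_4$ in $\N\cup\{\infty\}$ (split off the common image), so
\[ \dim_\phi(H_1(C_*))+\dim_\phi(\Z[\G]/\Z[\G]x_0)=\dim_\phi(\Z[\G]^k/\Z[\G]^l T_1)+\dim_\phi(H_0(C_*)).\]
By Lemma~\ref{lem:upperbound-degree-2}, $\dim_\phi(\Z[\G]/\Z[\G]x_0)=\th_\phi(\PP(x_0))$ is finite; since $H_0(C_*)$ is a quotient of $\Z[\G]/\Z[\G]x_0$ and $\kt$ is flat, $\dim_\phi(H_0(C_*))$ is finite as well, which is the first assertion. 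Subtracting these two finite numbers from the identity above gives
\[ \dim_\phi(H_1(C_*))-\dim_\phi(H_0(C_*))=\dim_\phi(\Z[\G]^k/\Z[\G]^l T_1)-\dim_\phi(\Z[\G]/\Z[\G]x_0),\]
and, because the subtracted quantities are finite, the left-hand side is finite exactly when $\dim_\phi(\Z[\G]^k/\Z[\G]^l T_1)$ is, i.e.\ exactly when the right-hand side is.

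There is no deep difficulty here. The only point requiring care is that $\Z[\G]^k/\Z[\G]^l T_1$ may be infinite-dimensional over $\K$ (for instance when $l=\infty$), which is why it is cleaner to argue via the four-term exact sequence and additivity of $\dim_\K$ in $\N\cup\{\infty\}$ than to try to write an Euler-characteristic identity term by term; beyond that, one should simply double-check that $C'_*$ is a subcomplex and that the long exact sequence collapses as claimed, both of which are immediate.
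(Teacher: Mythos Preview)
Your argument is correct. Both your proof and the paper's reduce the computation to the two-term complex $\Z[\G]^k/\Z[\G]^l T_1\xrightarrow{x_1}\Z[\G]/\Z[\G]x_0$ and then apply rank--nullity over $\K$; the difference is organisational. The paper writes down an explicit quotient chain complex $W_*$ over $\kt$ and checks by hand that the projection $\kt\otimes_{\Z[\G]}C_*\to W_*$ induces isomorphisms on $H_0$ and $H_1$, whereas you obtain the same four-term sequence as the long exact sequence of the pair $(C_*,C'_*)$, having used that $\Z[\G]$ is a domain to kill $H_1(C'_*)$ already over $\Z[\G]$ before localising. Your packaging has the minor advantage that the additivity identity in $\N\cup\{\infty\}$ disposes of the finite and infinite cases simultaneously, while the paper treats them separately; conversely, the paper's direct verification makes the identification $H_1(C_*)\cong\ker(x_1)$, $H_0(C_*)\cong\coker(x_1)$ completely explicit without invoking the long exact sequence. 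Either way the content is the same.
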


\begin{proof}
We again use the definitions and conventions from Lemma~\ref{lem:upperbound-degree}. 
By the flatness of $\kt$ we have $H_i(\kt \otimes_{\Z[\G]} C_*)=\kt \otimes_{\Z[\G]} H_i(C_*)$. We thus have to show that 
$H_0(C;\kt)$ is  a finite-dimensional $K$-vector space and that
\[ \ba{l}\hspace{1cm}\dim_K\left(H_1(C;\kt)\right)-\dim_K(H_0(C;\kt))\\[2mm]
\hspace{4cm}=\dim_K(\kt^k/\kt^lT_1)-\dim_K(\kt/\kt x_0).\ea\]
We consider the following commutative diagram:
\[ \xymatrix@C1.2cm@R1.5cm{\kt \otimes_{\Z[\G]} C_* =\hspace{-1.3cm} &\kt^l\ar[r]^-{\tiny \bp T_0& T_1\ep}\ar[d]&\kt\oplus \kt^{k}\ar[r]^-{\tiny \bp x_0\\ x_1\ep}\ar[d]&\kt\ar[d]\ar[r]& 0\\
\hspace{0.8cm}W_*=\hspace{-1.3cm} &0\ar[r]& \kt^k/\kt^lT_1\ar[r]^-{x_1}&\kt/\kt x_0\ar[r] & 0,}\]
where the vertical maps are given by the obvious projection maps.
It is straightforward to verify that the vertical maps induce isomorphisms between the homology groups in dimensions $1$ and $0$ of the chain complex
$\kt \otimes_{\Z[\G]} C_*$  on top and the chain complex $W_*$ on the bottom. Put differently, we have
\be \label{equ:samedim} H_i(\kt \otimes_{\Z[\G]} C_*)\cong H_i(W_*)\mbox{ for }i=0,1.\ee 

By assumption $x_0\ne 0$ in $\kt$, in particular $\kt/\kt x_0$ is a finite-dimensional $K$-vector space.
This implies immediately that 
\[ H_0(C;\kt)\cong \kt/\left((\kt\oplus \kt^k)\bp x_0\\ x_1\ep \right)\]
 is a finite-dimensional $K$-vector space.
This also implies that 
$$H_1(C;\kt)\cong \ker\Big(\cdot x_1\co \kt^k/\kt T_1\to \kt/\kt x_0\Big)$$ 
is finite-dimensional if and only if $\kt/\kt T_1$ is finite-dimensional.

Now we suppose that $\kt/\kt T_1$ is finite-dimensional. The bottom sequence of the above commutative diagram is thus a map between two finite-dimensional $K$-vector spaces. It thus follows that the difference between the dimensions of the kernel and the cokernel equals the difference between the dimensions of the vector spaces, i.e.\ we have
\[ \dim_K(H_1(W_*))-\dim_K(H_0(W_*))=\dim_K(\kt^k/\kt T_1)-\dim_K(\kt/\kt x_0).\]
The lemma follows from (\ref{equ:samedim}).
\end{proof} 

\begin{proof}[Proof of Proposition~\ref{prop:splittings-lower-bound}]
Let  $G$ be a group which admits a nice $(2,1)$--presentation. Furthermore,  let $\phi\co G\to \Z$ be an epimorphism. We write $G'=\ker(\phi)$. 

Let $\varphi\co G\to \G$ be an admissible homomorphism
such that $\tau(X_\pi,\varphi)\ne 0$. We denote the homomorphism 
\[ \G\to H_1(\G;\Z)/\mbox{torsion} \xleftarrow{\cong} G\xrightarrow{\phi}\Z\]
 again by $\phi$. Furthermore we write $\G'=\ker(\phi\co \G\to \Z)$. 

\begin{claim}
We have
\[\th_\phi(\PP(\tau(X_\pi,\varphi)))=\dim_\phi(H_1(X_\pi;\Z[\G]))-\dim_\phi(H_0(X_\pi;\Z[\G])).\]
\end{claim}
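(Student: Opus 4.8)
The plan is to run the $\Z[\G]$--coefficient chain complex of $X_\pi$ through Lemmas~\ref{lem:euler}, \ref{lem:upperbound-degree-2} and \ref{lem:computetau}. By the discussion in Section~\ref{section:2complexpi}, after applying $\varphi$ the complex $C_*^\varphi(X_\pi;\Z[\G])=\Z[\G]\otimes_{\Z[G]}C_*(\wti{X}_\pi)$ reads
\[ 0\to \Z[\G]\xrightarrow{\bp \varphi(r_x)& \varphi(r_y)\ep}\Z[\G]^2\xrightarrow{\bp \varphi(x-1)\\ \varphi(y-1)\ep}\Z[\G]\to 0,\]
which is precisely the complex $C_*$ of Lemma~\ref{lem:euler} with $k=l=1$, $T_0=\varphi(r_x)$, $T_1=\varphi(r_y)$, $x_0=\varphi(x-1)$ and $x_1=\varphi(y-1)$. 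Since $\varphi(x)$ is non-trivial in $\G$, we have $x_0=\varphi(x-1)=\varphi(x)-1\ne 0$ in $\Z[\G]$, so Lemma~\ref{lem:euler} applies and gives that $\dim_\phi(H_0(X_\pi;\Z[\G]))$ is finite and
\[ \dim_\phi(H_1(X_\pi;\Z[\G]))-\dim_\phi(H_0(X_\pi;\Z[\G]))=\dim_\phi\big(\Z[\G]/\Z[\G]\varphi(r_y)\big)-\dim_\phi\big(\Z[\G]/\Z[\G]\varphi(x-1)\big).\]

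Next I would rewrite the right-hand side in terms of thickness. Because $\tau(X_\pi,\varphi)\ne 0$, Lemma~\ref{lem:computetau} gives $\varphi(r_y)\ne 0$, and $\varphi(x-1)\ne 0$ as already noted, so Lemma~\ref{lem:upperbound-degree-2} applies to each, turning the last display into
\[ \dim_\phi(H_1(X_\pi;\Z[\G]))-\dim_\phi(H_0(X_\pi;\Z[\G]))=\th_\phi\big(\PP(\varphi(r_y))\big)-\th_\phi\big(\PP(\varphi(x-1))\big).\]

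Finally, Lemma~\ref{lem:computetau} also identifies $\PP(\tau(X_\pi,\varphi))$ with $\PP(\varphi(r_y))-\PP(\varphi(x-1))$ in $\mfg(H_1(\G;\R))$; using the cancellation property of $\mfp(H_1(\G;\R))$ this says $\PP(\varphi(r_y))=\PP(\tau(X_\pi,\varphi))+\PP(\varphi(x-1))$ up to translation. Since thickness is translation-invariant and additive under Minkowski sum by Lemma~\ref{lem:addth}(1), it follows that $\th_\phi(\PP(\varphi(r_y)))=\th_\phi(\PP(\tau(X_\pi,\varphi)))+\th_\phi(\PP(\varphi(x-1)))$; substituting this into the previous display cancels the $\PP(\varphi(x-1))$ contributions and yields the claim.

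There is no real obstacle here: the argument is a bookkeeping assembly of the three preceding lemmas, and the only points requiring care are the non-vanishing statements --- that of $\varphi(r_y)$ being exactly the hypothesis $\tau(X_\pi,\varphi)\ne 0$ via Lemma~\ref{lem:computetau}, and that of $\varphi(x-1)$ being immediate since distinct group elements of $\G$ have non-zero difference in $\Z[\G]$ --- together with the observation that $\th_\phi$ is insensitive to translation, so that it descends to $\mfp(H_1(\G;\R))$ and Lemma~\ref{lem:addth}(1) may legitimately be invoked on the level of translation classes.
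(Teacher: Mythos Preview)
Your proof is correct and follows essentially the same route as the paper: both arguments write out the $\Z[\G]$--chain complex of $X_\pi$, apply Lemma~\ref{lem:euler} with $x_0=\varphi(x-1)$, convert via Lemma~\ref{lem:upperbound-degree-2} to a difference of thicknesses, and then identify that difference with $\th_\phi(\PP(\tau(X_\pi,\varphi)))$. You are simply more explicit than the paper in justifying this last identification by citing Lemma~\ref{lem:computetau} and Lemma~\ref{lem:addth}(1), and in noting why $\varphi(r_y)\ne 0$.
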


Now we consider the chain complex
$C_*^\varphi(X_\pi;\Z[\G])$ which with respect to the obvious bases is given by
\[ 0\to \Z[\G]\xrightarrow{\bp \varphi(r_x)&\varphi(r_y)\ep }\Z[\G]^2\xrightarrow{\bp \varphi(x-1)\\\varphi(y-1)\ep}\Z[\G]\to 0.\]
Recall that we assume that $\varphi(x)$ is non-trivial, i.e.\ $\varphi(x-1)$ is non-zero in $\Z[\G]$.  Thus we can apply  Lemma~\ref{lem:euler} to the chain complex
$C_*^\varphi(X_\pi;\Z[\G])$  and we obtain that
\[
\ba{l}
\hspace{1cm} \dim_\phi\big(H_1(X_\pi;\Z[\G])\big)
-\dim_\phi\big(H_0(X_\pi;\Z[\G])\big)\\[2mm]
\hspace{4cm} =\dim_\phi\big(\Z[\G]/\Z[\G]\varphi(r_y)\big)
- \dim_\phi\big(\Z[\G]/\Z[\G]\varphi(x-1)\big).\ea\]
But by Lemma~ \ref{lem:upperbound-degree-2} the latter difference equals precisely
\[ \th_\phi(\PP(\varphi(r_y)))-\th_\phi(\PP(\varphi(x-1)))=
\th_\phi(\PP(\tau(X_\pi,\varphi))). \]
This concludes the proof of the claim.

Now we write $c=c(G,\phi)$. Let
\[ f\co G\xrightarrow{\cong} \ll A,t\,|\, \mu(B)=tBt^{-1}\rr \]
be a splitting of $(G,\phi)$ over a finitely generated group $B$
with rank $c$. 
We pick a presentation $\ll g_1,\dots,g_k\,|\, r_1,r_2,\dots \rr$ for $A$ and we pick a finite generating set $x_1,\dots,x_c$ for $B$.  Note that
\[ \ba{ll} &\ll g_1,\dots,g_k,t\,|\, r_1,r_2,\dots,\mbox{ and }\mu(y)=tbt^{-1}\mbox{ for all }b\in B\rr\\
=&\ll g_1,\dots,g_k,t\,|\, r_1,r_2,\dots,\mu(x_1)^{-1}tx_1t^{-1},\dots,\mu(x_{c})^{-1}tx_{c}t^{-1}\rr.\ea\]
We denote by $l \in \N\cup \{\infty\}$ the number of relators in the second presentation.
We denote by $Y$ the 2-complex corresponding to this presentation of $G$. 
It has one 0-cell, $k+1$ 1-cells and $l$ 2-cells.
Also note that $\pi_1(Y)=\pi_1(X)$ and we thus have 
\[ \dim_\phi(H_i^\varphi(X_\pi;\Z[\G]))=\dim_\phi(H_i^\varphi(Y;\Z[\G]))\mbox{ for }i=0,1.\]
In light of the previous claim it thus suffices to prove the following claim.

\begin{claim}
We have
\[ c-1\geq \dim_\phi(H_1(Y;\Z[\G]))-\dim_\phi(H_0(Y;\Z[\G])).\]
\end{claim}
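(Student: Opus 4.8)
The plan is to feed the based chain complex $C_*^\varphi(Y;\Z[\G])$ coming from the displayed presentation into Lemma~\ref{lem:euler}, and then to control the one non-trivial term appearing on the right-hand side of that lemma by means of Lemma~\ref{lem:upperbound-degree}. Concretely, I would order the $1$-cells of $Y$ so that the stable letter $t$ comes first; then $C_*^\varphi(Y;\Z[\G])$ takes the shape
\[ 0\to \Z[\G]^l\xrightarrow{\bp T_0& T_1\ep}\Z[\G]\oplus\Z[\G]^k\xrightarrow{\bp x_0\\ x_1\ep}\Z[\G]\to 0, \]
where $x_0=\varphi(t)-1$, where $x_1$ is the column of the $\varphi(g_i)-1$, where $T_0$ is the column of the Fox derivatives $\varphi(\partial(\mathrm{relator})/\partial t)$ and where $T_1$ is the $l\times k$ matrix of the Fox derivatives $\varphi(\partial(\mathrm{relator})/\partial g_j)$. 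Since $\phi(\varphi(t))=1$ we have $x_0\ne 0$, so Lemma~\ref{lem:euler} applies and gives
\[ \dim_\phi\big(H_1(Y;\Z[\G])\big)-\dim_\phi\big(H_0(Y;\Z[\G])\big)=\dim_\phi\big(\Z[\G]^k/\Z[\G]^lT_1\big)-\dim_\phi\big(\Z[\G]/\Z[\G]x_0\big). \]
By Lemma~\ref{lem:upperbound-degree-2} the subtrahend equals $\th_\phi(\PP(\varphi(t)-1))=1$, since $\PP(\varphi(t)-1)$ is the segment joining the classes of $\varphi(t)$ and $e$, on which $\phi$ takes the values $1$ and $0$. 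So it remains to prove that $\dim_\phi(\Z[\G]^k/\Z[\G]^lT_1)\le c$.

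For the finiteness of $\dim_\phi(\Z[\G]^k/\Z[\G]^lT_1)$ I would argue as follows. Since $\pi_1(Y)=\pi_1(X_\pi)$, the $\phi$-dimensions $\dim_\phi(H_i(Y;\Z[\G]))$ and $\dim_\phi(H_i(X_\pi;\Z[\G]))$ agree for $i=0,1$, and by the preceding claim their alternating sum is $\th_\phi(\PP(\tau(X_\pi,\varphi)))$, a finite integer. Together with the displayed identity and the fact, part of Lemma~\ref{lem:euler}, that its two sides are simultaneously finite or infinite, this forces $\dim_\phi(\Z[\G]^k/\Z[\G]^lT_1)<\infty$, so Lemma~\ref{lem:upperbound-degree} is applicable.

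Next I would put $T_1$ in the form $A+tB$ demanded by Lemma~\ref{lem:upperbound-degree}, over $\Z[\G']$ with $\G'=\ker(\phi\co\G\to\Z)$ and with $\varphi(t)$ playing the role of the element $t$ there, and read off that $B$ has at most $c$ non-zero rows. The relators $r_1,r_2,\dots$ of $A$ are words in the $g_i$ alone, so each $\varphi(\partial r_i/\partial g_j)$ lies in $\Z[\G']$ and the corresponding rows of $B$ vanish. For a relator $s_j=\mu(x_j)^{-1}tx_jt^{-1}$, writing $u=\mu(x_j)^{-1}$ and $v=x_j$ (both words in the $g_i$), the product rule for Fox derivatives gives $\partial s_j/\partial g_i=\partial u/\partial g_i+ut\,\partial v/\partial g_i$, hence
\[ \varphi(\partial s_j/\partial g_i)=\varphi(\partial u/\partial g_i)+\varphi(t)\cdot\big(\varphi(t)^{-1}\varphi(u)\varphi(t)\big)\cdot\varphi(\partial v/\partial g_i), \]
where $\varphi(\partial u/\partial g_i)$, $\varphi(t)^{-1}\varphi(u)\varphi(t)$ and $\varphi(\partial v/\partial g_i)$ all lie in $\Z[\G']$ because the $g_i$ and the elements $\mu(x_j),x_j$ map into $\G'$. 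Thus only the $c$ rows of $B$ indexed by $s_1,\dots,s_c$ can be non-zero, and Lemma~\ref{lem:upperbound-degree} yields $\dim_\phi(\Z[\G]^k/\Z[\G]^lT_1)\le c$. Substituting into the displayed identity gives $\dim_\phi(H_1(Y;\Z[\G]))-\dim_\phi(H_0(Y;\Z[\G]))=\dim_\phi(\Z[\G]^k/\Z[\G]^lT_1)-1\le c-1$, which is the claim.

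Most of this is bookkeeping: fixing the ordering of the cells so that the complex matches the template of Lemma~\ref{lem:euler}, noting that $T_0$ and $x_1$ play no role in the conclusion of that lemma, and checking that the $A$-relators contribute nothing to the $\varphi(t)$-part of $T_1$. The one genuinely non-formal ingredient is the finiteness of $\dim_\phi(\Z[\G]^k/\Z[\G]^lT_1)$ needed to invoke Lemma~\ref{lem:upperbound-degree}; I would obtain it as above from the preceding claim rather than by arguing acyclicity of $C_*^\varphi(X_\pi;\K(\G))$ directly.
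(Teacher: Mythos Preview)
Your proposal is correct and follows essentially the same route as the paper: apply Lemma~\ref{lem:euler} to the chain complex of $Y$ with the $t$-cell singled out, compute the $\varphi(t)-1$ term via Lemma~\ref{lem:upperbound-degree-2}, decompose the Fox-derivative matrix $T_1$ as $A+tB$ with $A,B$ over $\Z[\G']$ and only the $c$ rows coming from the HNN relators contributing to $B$, and conclude via Lemma~\ref{lem:upperbound-degree}. Your treatment is in fact slightly more careful than the paper's in two places: you make explicit the conjugation $\varphi(u)\varphi(t)=\varphi(t)\cdot(\varphi(t)^{-1}\varphi(u)\varphi(t))$ needed to land in $\Z[\G']$, and you spell out why the finiteness hypothesis of Lemma~\ref{lem:upperbound-degree} is met (the paper simply invokes the lemma, the finiteness being implicit from the preceding claim and the identification $\dim_\phi H_i(Y;\Z[\G])=\dim_\phi H_i(X_\pi;\Z[\G])$).
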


We denote by $M$ the matrix over $\Z[G]$ that is given by all the Fox derivatives of the relators. 
We denote the first column of $M$, corresponding to the Fox derivatives with respect to $t$, by $M_0$, and we denote the matrix given by all the other columns by $M_1$. 

We make the following observations.
\bn
\item The relators $r_1,r_2,\dots$ are words in $g_1,\dots,g_k$. The Fox derivatives of the $r_i$ with respect to the $g_j$ thus lie in $\Z[G']$.
\item For any $i\in \{1,\dots,k\}$ and $j\in \{1,\dots,{c}\}$ we have
\[ \frac{\partial}{\partial g_i}\left(\mu(x_j)^{-1}tx_jt^{-1}\right)=\frac{\partial}{\partial g_i}\left(\mu(x_j)^{-1}\right)+\mu(x_j)^{-1}t\frac{\partial}{\partial g_i}x_j.\]
The same argument as in (1) shows that the first term lies in $\Z[G']$, and one can similarly see that the second term is of the form $t\cdot g$, where $g\in \Z[G']$.
\en
Thus $M_1$ is of the form
\[ M_1=P_1+tQ_1,\]
where $P_1$ and $Q_1$ are matrices over $\Z[G']$, and where all but the last ${c}$ rows of $Q_1$ are zero.

By a slight abuse of notation we denote $\varphi(t)\in \G$ again by $t$.
Now we consider the chain complex $C_*^\varphi(Y;\Z[\G])$ with respect to the obvious bases:
\[ 
\Z[\G]^l\xrightarrow{\displaystyle \varphi(M_0)\oplus (\varphi(P)+t\varphi(Q))}\Z[\G]\oplus\Z[\G]^{k}\xrightarrow{\bp \varphi(t-1)\\\varphi(g_1-1)\\\dots\\\varphi(g_k-1)\ep}\Z[\G]\to 0.\]
Note that $t$ is non-trivial in $\G$ since $\phi$ factors through $\G$.
We can thus apply  Lemma~\ref{lem:euler} and we obtain that 
\[ \ba{l}
\hspace{1cm}
\dim_\phi\big(H_1(Y;\Z[\G])\big)-\dim_\phi\big(H_0(Y;\Z[\G])\big)\\
\hspace{4cm}=\dim_\phi\big(\Z[\G]^k/\Z[\G]^l(\varphi(P)+t\varphi(Q))\big)-\dim_\phi\big(\Z[\G]/\Z[\G]\varphi(t-1)\big).\ea\]
By  Lemma~\ref{lem:upperbound-degree} we have 
\[ c\geq\dim_\phi\left(\Z[\G]^k/\Z[\G]^l(\varphi(P)+t\varphi(Q))\right) \]
and by Lemma~\ref{lem:upperbound-degree-2} we have $\dim_\phi(\Z[\G]/\Z[\G]\varphi(t-1))=1$.
This concludes the proof of the claim and thus of the proposition.
\end{proof}

\section{Groups which admit a $(2,1)$--presentation with $b_1=1$}\label{section:b1}
Throughout the paper we worked with with nice $(2,1)$--presentations, i.e.\ with presentations $\pi=\ll x,y|r\rr$  where $r$ is non-empty and cyclically reduced and with $b_1(G_\pi)=2$.

Now we will see that we can drop the condition $b_1(G_\pi)=2$ in almost all cases.
Before we state the next proposition we need to introduce two more definitions.
\bn
\item For $m,n\in \Z$ the Baumslag-Solitar group $B(m,n)$ is defined as 
\[ B(m,n):=\ll x,y| xy^mx^{-1}=y^n\rr.\]
\item We say a $(2,1)$--presentation $\pi=\ll x,y\,|\,r \rr$ is \emph{simple} if  $b_1(G_\pi)=1$, if  $x$ defines a generator of $H_1(\pi;\Z)/\mbox{torsion}$ and if $y$ represents the trivial element in $H_1(\pi;\Z)/\mbox{torsion}$.
\en
Now we can formulate the following proposition.

\begin{proposition}\label{prop:foxpolytope-b1}
Let $\pi=\ll x,y\,|\,r \rr$
be a  $(2,1)$--presentation  where $r$ is non-trivial and cyclically reduced.
If $\pi$ is simple and if   $G_\pi$ is not isomorphic to $B(\pm 1,n)$ for any $n\in \Z$, then  there exists a unique marked polytope $\MM$, such that 
\[ \MM+\MM(x-1)= \MM(r_y).\]
\end{proposition}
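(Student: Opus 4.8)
The plan is to replay the proof of Lemma~\ref{lem:defmpi} one dimension lower, subtracting only $\MM(x-1)$. Since $\pi$ is simple, $b_1(G_\pi)=1$, so I would identify $H_1(G_\pi;\R)$ with $\R$ in such a way that $\eps(x)=1$ (a generator, as $x$ generates $H_1(\pi;\Z)/\mbox{torsion}$) and $\eps(y)=0$ ($y$ being torsion in $H_1$). As in Section~\ref{sec:marked polytope via fox calculus}, $\MM(x-1)$ is then the interval $[0,1]$ with both endpoints marked. First I would check that $r$ must contain a letter $y^{\pm1}$: otherwise $r=x^{\pm l(r)}$, and $r=e$ in $G_\pi$ would force $x$ to be torsion in $H_1(G_\pi;\Z)$, contradicting simplicity; hence $r_y$ has at least one summand, and by Corollary~\ref{cor:distinct} its summands are pairwise distinct in $\Z[G_\pi]$, so $r_y\ne0$ and $\MM(r_y)$ is defined. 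The proposition then reduces to the obvious one-dimensional analogue of Lemma~\ref{lem:subtracty}: a marked interval $\NN=[a,b]\subset\R$ equals $\MM+[0,1]$ (with both endpoints of $[0,1]$ marked) for a unique marked polytope $\MM$ precisely when $b-a\ge1$ and, in the case $b-a=1$, the endpoints $a$ and $b$ carry the same mark --- in which case $\MM=[a,b-1]$, with marks induced as in Section~\ref{section:minkowski}. So it suffices to verify these two conditions for $\NN:=\MM(r_y)$.

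To do this I would first describe $\MM(r_y)$ concretely. By Corollary~\ref{cor:distinct} the summands of $r_y$ are distinct elements $\eta_jw_j$, where $w_j$ ranges over the prefixes of $r$ ending immediately before each $y$ and immediately after each $y^{-1}$; since $\eps(y)=0$, the value $\eps(w_j)$ is the signed number of $x$-letters in $w_j$, i.e.\ the ``$x$-level'' of that occurrence of $y^{\pm1}$. Hence $\PP(r_y)$ is the interval spanned by the lowest and highest such level, and by Lemma~\ref{lem:defmf} its endpoint at level $v$ is marked exactly when precisely one occurrence of $y^{\pm1}$ sits at level $v$. If the diameter of $\PP(r_y)$ were $0$, all $y^{\pm1}$ would sit at one level, so between two cyclically consecutive occurrences of $y^{\pm1}$ there could be no $x$-letter at all (a nonempty reduced word in $x$ has nonzero exponent sum and so changes the level); thus $r=y^{k}$, giving $G_\pi\cong\Z\cong B(\pm1,0)$ when $|k|=1$, and $G_\pi\cong\Z*\Z/|k|$ when $|k|\ge2$ --- in the latter case $\MM(r_y)$ is a single point and no $\MM$ exists, so that case must be listed among the exceptions as well. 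Otherwise the diameter is a positive integer, so condition~(i) holds.

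Condition~(ii) is the substantive point. Assume the diameter equals $1$, so all $y^{\pm1}$ sit at two consecutive levels $c,c+1$. Every maximal $x$-run in $r$ is a reduced word $x^{\pm m}$, and since the $y^{\pm1}$ just before and just after it lie in $\{c,c+1\}$ we must have $m=1$; so the walk determined by $r$ stays in $\{c,c+1\}$, uses $x$ only to move up and $x^{-1}$ only to move down, and cyclically $r$ is an alternating string of $y$-blocks at level $c$, single $x$, $y$-blocks at level $c+1$, single $x^{-1}$. Cyclic reducedness forbids an empty $y$-block sandwiched between an $x$ and the next $x^{-1}$, and likewise between an $x^{-1}$ and the next $x$; so if $r$ has $t$ letters $x$ (equivalently $t$ letters $x^{-1}$), then at least $t$ occurrences of $y^{\pm1}$ lie at each level. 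A mismatch of the endpoint marks of $\MM(r_y)$ then forces, after possibly applying the substitution $x\mapsto x^{-1}$, that exactly one $y^{\pm1}$ sits at level $c$ while at least two sit at level $c+1$; hence $t=1$, and up to cyclic permutation $r=y^{\varepsilon}xy^{m}x^{-1}$ with $\varepsilon=\pm1$ and $|m|\ge2$. The relator reads $xy^{m}x^{-1}=y^{-\varepsilon}$, so $G_\pi\cong B(m,-\varepsilon)\cong B(-\varepsilon,m)$, which is of the form $B(\pm1,n)$ --- contrary to hypothesis. Thus condition~(ii) holds, and the one-dimensional subtraction lemma produces the required unique marked polytope $\MM$.

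The main obstacle is exactly this combinatorial rigidity in the third paragraph: converting a single asymmetry of the markings (one level visited once, the other more than once) into the precise Baumslag--Solitar shape of $r$. This rests on two uses of cyclic reducedness --- forcing every maximal $x$-run to be a single letter, and ruling out an empty $y$-block squeezed between an $x$ and an $x^{-1}$ --- together with the standard isomorphism $B(a,b)\cong B(b,a)$ to recognise the group as $B(\pm1,n)$. The remaining ingredients (the one-dimensional subtraction lemma, and the description of $\PP(r_y)$ together with its marked endpoints via Corollary~\ref{cor:distinct} and Lemma~\ref{lem:defmf}) are routine.
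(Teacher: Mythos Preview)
Your argument follows the same route as the paper's: identify $H_1(G_\pi;\R)$ with $\R$ via $\eps(x)=1$, $\eps(y)=0$, note that $\MM(x-1)$ is the unit interval with both endpoints marked, read off the length of $\PP(r_y)$ as the spread of the $x$-levels of the $y$-letters, and in the length-one case use cyclic reducedness to force the Baumslag--Solitar shape. The paper writes $r$ in the normal form $x^{m_1}y^{n_1}\cdots x^{m_k}y^{n_k}$ (or its cyclic shift) with all exponents nonzero and deduces that for $D=1$ the $m_i$ alternate between $1$ and $-1$; this is exactly your alternation of nonempty $y$-blocks and single letters $x^{\pm 1}$, and your counting ($\geq t$ occurrences of $y^{\pm 1}$ at each level, so a mismatch of markings forces $t=1$) is the paper's observation that for $k\geq 4$ both endpoints are unmarked while $k=2$ with some $n_i=\pm 1$ yields $B(\pm 1,n)$.

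You are right to flag $r=y^k$ with $|k|\geq 2$. The paper's displayed normal form for $r$ tacitly assumes that $r$ contains at least one letter $x^{\pm 1}$, and this case slips through: here $G_\pi\cong \Z*\Z/|k|$ has torsion and hence is not isomorphic to any $B(\pm 1,n)$ (all of which are torsion-free), yet $\MM(r_y)$ is a single point, so no $\MM$ with $\MM+\MM(x-1)=\MM(r_y)$ exists. This is a genuine omission in the statement rather than a defect of your proof; once one also excludes $r=y^k$, your argument and the paper's coincide.
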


\begin{proof}
By our hypothesis there exists  an epimorphism $\phi\co \pi\to \Z$ with $\phi(x)=1$ and $\phi(y)=0$.  We use this epimorphism to identify $H_1(\pi;\R)$ with $\R$.
 
Note that $\MM(x-1)$ is an interval of length $1$ where both end points are marked. 
Also note that  $r$ is either of the form
$x^{m_1}y^{n_1}\cdot \dots \cdot x^{m_k}y^{n_k}$ where $m_1,n_1,\dots,m_k,n_k$ are non-zero or it is of the form
$y^{n_1}x^{m_1}\cdot \dots \cdot y^{n_k}x^{m_k}$ where $m_1,n_1,\dots,m_k,n_k$ are non-zero.  In either case our assumptions on $x$ and $y$ imply that $m_1+\dots+m_k=0$.

Given $i\in \{1,\dots,k\}$ we write 
\[ \ba{rcl} M_i&=&m_1+\dots+m_i, \\
D&=&\max\{M_1,\dots,M_k\}-\min\{M_1,\dots,M_k\}.\ea \]
It follows easily from the definitions that  $\PP(r_y)$ is an interval of length $D$.

If $D\geq 2$, then we denote by $\MM$ a marked interval in $\R$ with length $D-1$ where we mark the left (respectively right) vertex if and only if the left (respectively right) vertex of $\MM(r_y)$ is marked. After possibly translating $\MM$ by an integer we then have $\MM+\MM(x-1)=\MM(r_y)$. 

Now we consider the case that $D=1$. It follows that the $m_i$ are alternating between $1$ and $-1$.  Since $m_1+\dots+m_k=0$ we deduce that $k$ is even.
It follows easily from Corollary~\ref{cor:distinct}
that both end points of $\MM(r_y)$ are not marked unless $k=2$ and at least one of $n_1$ or $n_2$ is equal to $\pm 1$. But this case does not occur, since such a group would be isomorphic to a Baumslag-Solitar group of the form $B(\pm 1,n)$. Summarizing, we showed that $\MM(r_y)$ is an interval of length one such that both end points are not marked. In this case we  take  $\MM$ to be the polytope which consists of a single not marked point. It is clear that this $\MM$ has the desired property and that it is unique up to translation.
\end{proof}

For a $(2,1)$--presentation as in Proposition~\ref{prop:foxpolytope-b1} we now define $\MM_\pi$ to be the marked polytope that we found in that proposition.

Finally let  $\pi=\ll x,y\,|\,r \rr$ be any  $(2,1)$--presentation  where $r$ is non-trivial and cyclically reduced and with $b_1(G_\pi)=1$.
We can apply the proof of Lemma~\ref{lem:splittings1} verbatim to $\pi$ and we obtain a simple presentation $\pi'=\ll x',y'\,|\,r'\rr,$  where $r'$ is non-trivial and cyclically reduced. If $G_\pi\cong G_{\pi'}$ is not isomorphic to $B(\pm 1,n)$, then we define $\MM_\pi:=\MM_{\pi'}$. 

Now it is straightforward to verify that the statements of Theorems \ref{mainthm}, \ref{mainthm2} and \ref{mainthm3} also hold in this context.
We leave the details to the reader. 

Finally, note that it is not possible to find a marked polytope for the Baumslag-Solitar groups $B(\pm 1,n), n\ne \pm 1$ which satisfies the conclusions of Theorems \ref{mainthm} and \ref{mainthm3}. Indeed, for Theorem~\ref{mainthm3} to hold the polytope would have to consist of a single point. But the  Bieri--Neumann--Strebel invariant contains one epimorphism $\phi\co \pi\to \Z$ but not the other. So the one vertex of the polytope would have to be marked and not marked at the same time.

\section{Conclusion and questions}\label{section:questions}
Given a group $G$ with  a nice $(2,1)$--presentation $\pi$
we  used Fox calculus to define a marked polytope $\MM_\pi$ that in particular determines  the Bieri--Neumann--Strebel invariant of $G_\pi$. We also showed that in many cases $\MM_\pi$ carries interesting further information on $G$. 
 It remains an open problem to relate the polyhedral structure of the polytope $\PP_\pi$ to properties of the group $G_\pi,$ and to extend the construction to more general classes of groups. We conclude this paper with several questions aimed at this.

\begin{question}
Is the polytope $\PP_\pi$ an invariant of the underlying group $G_\pi$?
\end{question}

In Theorem~\ref{mainthm3} we proved that if $G$ is residually a torsion-free elementary amenable group, then the thickness of $\PP_\pi$ for any epimorphism $\phi\co G\to \Z$ can be described purely in terms of $G_\pi$ and $\phi$.
This does not give an intrinsic definition of the polytope since by Lemma~\ref{lem:addth} (2) the thickness only determines the \emph{symmetrization} of  $\PP_\pi$.

\begin{question}
Is there an intrinsic definition of the  polytope $\PP_\pi$?
\end{question}

The Bieri--Neumann--Strebel-invariant $\Sigma(G_\pi)$ can be identified with an open subset contained in the interior of the faces of $\PP_\pi.$ Moreover, the set of all points in $\Sigma(G_\pi)$ corresponding to homomorphisms with finitely generated kernel is symmetric and open. In this way, given an asymmetric marked polytope, the vertices determine a natural subdivision of some of the opposite faces, and hence a possibly finer polyhedral structure, with some open regions corresponding to finite generation.

\begin{question}
Does the polyhedral structure of the polytope $\PP_\pi$  contain more information about $G_\pi$? 
\end{question}

\begin{question}
Is it possible to assign to any finitely presented group
a marked polytope which satisfies the conclusions of Theorems  \ref{mainthm}
and \ref{mainthm3}?
More modestly, one could ask for a marked polytope for groups with a presentation of deficiency one.
\end{question}

If $G$ is a group with a 2--dimensional Eilenberg-Maclane space of zero Euler characteristic, then  the approach of Section~\ref{section:tg} 
together with a variation on Proposition~\ref{prop:invtofg}
will assign to $G$ a  (possibly empty) polytope.
However, if $G$ is not residually $\GG,$ then it is unlikely that the polytope will have the desired properties.

Over the last years a lot of effort has been put into understanding free-by-cyclic groups, i.e.\  groups of the form $\Z\ltimes_\varphi F,$ where $F$ is a free group and $\varphi\co F\to F$ is an isomorphism. These groups have a presentation of deficiency one, and they
are residually $\GG$ by Lemma~\ref{lem:zbyfree}. 
The construction of Section~\ref{section:tg}  will then actually give a non-empty polytope and it should be interesting to relate it to aspects of \cite{DKL13a,DKL13b} and \cite{AKHR13}. For example, if $\phi\in H^1(G_\pi;\R)$ has the property that both $\phi$ and $-\phi$ lie in $\Sigma(G_\pi)$, then both approaches `see' the function $\psi\mapsto 1-\rank(\ker(\psi))$ in a neighborhood of $\phi$. 

Finally, let $\pi$ be a presentation such that each generator appears at least twice in the relators.
(Note that a nice $(2,1)$--presentation is of that type.)
Given such a presentation $\pi$  Turaev  \cite{Tu02} defined a seminorm on $H^1(X_\pi;\R)$. We conclude this paper with the following question.

\begin{question}
 Let $G_\pi$ be a nice $(2,1)$--presentation. Is the polytope $\PP_\pi$ in $H_1(G_\pi;\R)=H_1(X_\pi;\R)$ dual to the unit norm ball of the norm defined by Turaev \cite{Tu02} on $H^1(X_\pi;\R)=\hom(H_1(X_\pi;\R),\R)$?
\end{question}

\end{document}